\newtheorem{theorem}{Theorem}[section]
\newtheorem{corollary}[theorem]{Corollary}
\newtheorem{proposition}[theorem]{Proposition}
\newtheorem{lemma}[theorem]{Lemma}
\theoremstyle{definition}    
\newtheorem{definition}[theorem]{Definition}
\theoremstyle{remark}
\newtheorem{remark}[theorem]{Remark}
\newtheorem{example}[theorem]{Example}
\newcommand{\ignore}[1]{}
\newcommand{\scr}[1]{\mathscr{#1}}
\newcommand{\mf}[1]{\mathfrak{#1}}
\newcommand{\tn}[1]{\textnormal{#1}}
\renewcommand{\i}{{\mathrm{i}}}
\def\d{\ensuremath{\mathrm{d}}}
\DeclareMathOperator*{\res}{Res}
\def\ad{\ensuremath{\textnormal{ad}}}
\def\C{\ensuremath{\mathcal{C}}}
\def\D{\ensuremath{\mathcal{D}}}
\def\E{\ensuremath{\mathcal{E}}}
\def\L{\ensuremath{\mathcal{L}}}
\def\O{\ensuremath{\mathcal{O}}}
\def\bC{\ensuremath{\mathbb{C}}}
\def\bR{\ensuremath{\mathbb{R}}}
\def\bZ{\ensuremath{\mathbb{Z}}}
\def\bA{\ensuremath{\mathbb{A}}}
\def\End{\ensuremath{\textnormal{End}}}
\def\ker{\ensuremath{\textnormal{ker}}}
\def\Tr{\ensuremath{\textnormal{Tr}}}
\def\tr{\ensuremath{\textnormal{tr}}}
\def\pr{\ensuremath{\textnormal{pr}}}
\def\dim{\ensuremath{\textnormal{dim}}}
\def\Sym{\ensuremath{\textnormal{Sym}}}
\def\dom{\ensuremath{\textnormal{dom}}}
\def\Cl{\ensuremath{\textnormal{Cl}}}
\def\index{\ensuremath{\textnormal{index}}}
\def\Ahat{\ensuremath{\widehat{\textnormal{A}}}}
\newcommand{\br}[1]{\bm{\mathrm{#1}}}
\title{Connes-Moscovici residue cocycle for some Dirac-type operators}
\author{Ahmad Reza Haj Saeedi Sadegh, Yiannis Loizides and Jesus Sanchez}
\begin{document}
\sloppy
\maketitle

\begin{abstract}
The residue cocycle associated to a suitable spectral triple is the key component of the Connes-Moscovici local index theorem in noncommutative geometry. We review the relationship between the residue cocycle and heat kernel asymptotics. We use a modified version of the Getzler calculus to compute the cocycle for a class of Dirac-type operators introduced by Bismut, obtained by deforming a Dirac operator by a closed $3$-form $B$. We also compute the cocycle in low-dimensions when the $3$-form $B$ is not closed.
\end{abstract}

\section{Introduction}
An even spectral triple consists of an algebra $A$ acting on a $\bZ_2$-graded Hilbert space $H$ and an odd (unbounded) self-adjoint operator $D$ on $H$ such that $[D,a]$ is bounded and $a(1+D^2)^{-1}$ is compact for $a \in A$. Under suitable analytic hypotheses, Connes and Moscovici \cite{connes1995local} introduced a cocycle in the $(b,B)$-bicomplex computing the periodic cyclic cohomology of $A$, involving a complicated combination of residues of spectral $\zeta$-functions for operators built from $D$, $\Delta=D^2$ and $A$. The cocycle can be used to compute index pairings with elements of the K-theory $K_0(A)$, and the resulting formula is referred to as the local index theorem in noncommutative geometry. We provide a brief introduction in Section \ref{s:ConnesMoscovici}, and explain the relationship with heat kernel asymptotics.

The first example of a spectral triple is the case $A=C^\infty(M)$ for a closed even-dimensional Riemannian spin manifold $(M^n,S \circlearrowleft \bC l(T^*M))$, and $D=D^{LC}$ is the spin Dirac operator acting on the space of $L^2$ spinors $L^2(M,S)$. In this case the Getzler calculus \cite{getzler1983pseudodifferential} can be used to show that all but the lowest terms in the formula for the residue cocycle vanish identically, and to simplify the remaining terms. In \cite[Remark II.1]{connes1995local} it is very briefly remarked that this approach leads to the formula (for the $p+1$-multilinear component):
\begin{equation} 
\label{e:ASCM}
\varphi_p(a_0,...,a_p)=\frac{(2\pi \i)^{-n/2}}{p!}\int_M a_0 \d a_1\cdots \d a_p \cdot \tn{det}^{1/2}\Big(\frac{R^{LC}/2}{\sinh(R^{LC}/2)}\Big)_{[n-p]} 
\end{equation}
where $0\le p\le n$ is even, $a_0,...,a_p\in C^\infty(M)$ and $R^{LC} \in \Omega^2(M,\mf{o}(TM))$ is the Riemannian curvature. Detailed proofs were given by Ponge \cite{ponge2003new}, Chern-Hu \cite{chern1997equivariant}, and Lescure \cite{lescure2001triplets}.

Bismut \cite{bismut1989local} studied the local index problem for operators of the form
\[ D=D^{LC}+c(B) \]
where $D^{LC}$ is the spin Dirac operator as above and $B \in \Omega^3(M)$ is a closed $3$-form (it is also possible to work with more general Clifford module bundles). Bismut showed that there is still a local index theorem in this case, where the local supertrace converges to
\[ (2\pi \i)^{-n/2}\tn{det}^{1/2}\Big(\frac{R_-/2}{\sinh(R_-/2)}\Big) \]
with $R_-$ the curvature of a certain metric connection $\nabla_-$ (depending on $B$). In \cite{bismut1989local} probabilistic methods were used to prove this result, but it was also remarked that other methods, including Getzler's method, could be adapted to this situation. In the first part of this article we spell out the details of this modification of Getzler's method, and use it to compute the residue cocycle for $D$. The result is not surprising: we obtain \eqref{e:ASCM} but with $R^{LC}$ replaced with $R_-$.

Part of the motivation for this work was to gain insight into the complicated higher terms in the Connes-Moscovici cocycle in situations where the Getzler calculus is not available and the terms do not vanish identically. Toward this goal, in the second part of the article we study the residue cocycle for $D=D^{LC}+c(B)$ in low dimensions ($n=4$, $6$) when $B$ is \emph{not} closed. In this case the Getzler calculus is not sufficient and some of the higher terms do appear. We give a complete formula for $n=4$ (Theorems \ref{t:phi0}, \ref{t:p2}) and for $\varphi_0$ in the case $n=6$ (Theorem \ref{t:p6}; in a sense $\varphi_0$ is the most involved to compute, because it depends on the greatest number of terms in the asymptotic expansion of the heat kernel). The formulas include the expected $R_-$ Chern-Weil representative of the $\Ahat$-class, plus a correction term constructed from $\d B$.

\bigskip

\noindent \textbf{Acknowledgements.} We thank Nigel Higson for suggesting this question and for many helpful conversations.

\bigskip

\noindent \textbf{Notation.} For elements $a,b$ of a $\bZ_2$-graded algebra, $[a,b]$ denotes the graded commutator. We use the convention $v^2=-|v|^2$ for the Clifford algebra $\Cl(V)$ of a real Euclidean vector bundle $(V,|\cdot|)$. If $V$ is oriented, then $\scr{B}\colon \wedge^{\tn{rk}(V)}V\rightarrow \bR$ denotes the `Berezin integral' (\cite[p.40]{BerlineGetzlerVergne}) determined by the Euclidean structure and the orientation.

\section{The Connes-Moscovici residue cocycle}\label{s:ConnesMoscovici}
In \cite{connes1995local} Connes and Moscovici introduced a powerful new $(b,B)$-cocycle into noncommutative geometry. The cocycle is associated to a spectral triple $(A,H,D)$ satisfying suitable technical hypotheses, and involves residues of spectral zeta functions for $\Delta=D^2$. It is cohomologous (in the $(b,B)$-bicomplex) to Connes' Chern character for the K-homology class $[F]\in K^0(A)$ defined by the operator $F=D(1+D^2)^{-1/2}$, and hence in particular may be used to compute index pairings (see for example \cite{ConnesBook, getzler1989chern, higson2006residue}). In this section we give a short introduction to the Connes-Moscovici cocycle following Higson's notes \cite{higson2006residue}, and then briefly discuss the relationship to heat kernel asymptotics. One simplifying assumption we shall make is to take the operator $D$ to be Fredholm. The non-Fredholm case is important in certain applications, but introduces minor technical complications, and we refer the interested reader to \cite{higson2006residue} for discussion of how to reduce to the Fredholm (even the invertible) case.

\subsection{The Connes-Moscovici theorem}
Recall (cf. \cite{ConnesBook}) that an \emph{even spectral triple} $(A,H,D)$ for an (ungraded) algebra $A$ consists of a $\bZ_2$-graded Hilbert space $H$, a representation of $A$ on $H$ by operators of even degree, and an unbounded odd self-adjoint operator $D$ such that for all $a \in A$,
\begin{enumerate}
\item $a(1+D^2)^{-1}$ is a compact operator, and
\item $a\cdot \dom(D)\subset \dom(D)$ and the commutator $[D,a]$ extends to a bounded operator on $H$.
\end{enumerate}
The standard example is $A=C^\infty_c(M)$, $M$ a complete even-dimensional Riemannian manifold, $H=L^2(M,S)$ and $D$ a Dirac operator acting on sections of a $\bZ_2$-graded Clifford module $S$.

The Connes-Moscovici result applies to spectral triples satisfying some technical hypotheses that we now describe. Following \cite{higson2006residue}, we introduce an algebra of `generalized differential operators' for $(A,H,D)$. Let $\Delta=D^2$ and define
\[ H^\infty=\bigcap_{s=1}^\infty \dom(\Delta^s).\]
Suppose $a\cdot H^\infty \subset H^\infty$ for all $a \in A$. Let $\D(A,D)$ be the smallest algebra of operators on $H^\infty$ that contains $A$, $[D,A]$ and such that if $X \in \D(A,D)$ then $[\Delta,X]\in \D(A,D)$. 

We suppose the algebra $\D(A,D)$ is equipped with an increasing filtration $\D(A,D)=\cup_{q \ge 0} \D_q(A,D)$, where the operators $X$ in $\D_q(A,D)$ are said to have `(generalized) order $q$' (notation: $o(X)=q$), having the following properties:
\begin{enumerate}
\item There is an even integer $r\ge 2$ such that 
\begin{equation} 
\label{e:commutatororders}
[\Delta,\D_q(A,D)]\subset \D_{q+r-1}(A,D), \qquad [D,a] \in \D_{\frac{r}{2}-1}(A,D), \quad \forall a \in A.
\end{equation}
\item If $X \in \D_q(A,D)$ then there is a constant $\varepsilon>0$ such that
\begin{equation} 
\label{e:ellipticestimate}
\varepsilon\|Xv\|\le \|v\|+\|\Delta^{\frac{q}{r}}v\|.
\end{equation}
\end{enumerate}
The integer $r$ should be thought of as the generalized order of $\Delta$, even though $\Delta$ need not itself lie in the algebra $\D(A,D)$. The correct choice of $r$ depends on the details of the application; in the standard example mentioned above $r=2$, but for example Connes and Moscovici study an interesting example with $r=4$.

We now make the simplifying assumption that $D$ is a Fredholm operator (cf. \cite[Section 6.1]{higson2006residue} for a discussion of how to reduce to this situation). For $\tn{Re}(z)>0$ define
\[ \Delta^{-z}=\int_{\C} \lambda^{-z}(\lambda-\Delta)^{-1}\d \lambda,\]
where the contour $\C$ is a downward-oriented vertical line in the complex plane which separates $0$ from the strictly positive part of the spectrum of $\Delta$.

\begin{definition}
The algebra $\D(A,D)$ has \emph{finite analytic dimension} if there is a real number $n\ge 0$ such that if $X \in \D_q(A,D)$ then for all $z \in \bC$ with real part $\Re(z)>\frac{q+n}{r}$, the operator $X\Delta^{-z}$ extends by continuity to a trace-class operator on $H$. The minimal $n\ge 0$ with this property is called the \emph{analytic dimension} of $\D(A,D)$. The function $z\mapsto \Tr(X\Delta^{-z})$ is then well-defined and holomorphic in the right half-plane $\Re(z)>\frac{q+n}{r}$. We say that $\D(A,D)$ has the \emph{analytic continuation property} if this function extends to a meromorphic function on $\bC$.
\end{definition}
If $\Delta=D^2$ for a Dirac operator $D$ on a compact Riemannian manifold $M$, the above analytic continuation property is well-known (originally due to Minakshisundaram-Pleijel) and the analytic dimension coincides with the dimension of $M$. For further context see for example the account in \cite{higson2004meromorphic}, which describes a fairly general analytic continuation result. Note in particular that if $\D(A,D)$ has finite analytic dimension $n$ then for $q>\frac{r}{n}$ and for all $a \in A \subset \D_0(A,D)$, $a(1+D^2)^{-q}$ is trace class, hence $(A,H,D)$ is a \emph{finitely summable} spectral triple and Connes' Chern character is defined (cf. \cite{ConnesBook}).

The following is a case of the Connes-Moscovici local index formula \cite{connes1995local}, although we have formulated the hypotheses in the language of \cite{higson2006residue}.
\begin{theorem}[Connes-Moscovici \cite{connes1995local}]
\label{t:CM}
Let $(A,H,D)$ be an even spectral triple with $D$ Fredholm, and let $\Delta=D^2$. Suppose $A$ preserves $H^\infty=\cap_{s\ge 1} \dom(\Delta^s)$, and define the filtered algebra $\D(A,D)$ as above. Assume $\D(A,D)$ satisfies \eqref{e:commutatororders}, \eqref{e:ellipticestimate}, has finite analytic dimension $n$, and satisfies the analytic continuation property. Then there is an even $(b,B)$-cocycle $\Phi=(\varphi_0,\varphi_2,...,\varphi_{2\lfloor \frac{n}{2}\rfloor})$ cohomologous to Connes' Chern character, which is given, for all $a_0,...,a_p \in A$, by the following expressions:
\[ \varphi_0(a_0)=\res_{z=0}(\Gamma(z)\Tr_s(a_0\Delta^{-z}))+\Tr_s(a_0\Pi)\]
where $\Pi$ is orthogonal projection onto the kernel of $D$ and $\Tr_s$ denotes the supertrace, and if $p>0$,
\begin{align*} 
\varphi_p(a_0,...,a_p)&=\sum_{|k|\le n-p} c_{pk}\res_{z=\frac{p}{2}+|k|}\Tr_s(P_k(a_0,...,a_p)\Delta^{-z}),\\ 
P_k(a_0,...,a_p)&=a_0[D,a_1]^{(k_1)}\cdots [D,a_p]^{(k_p)}
\end{align*}
where $k=(k_1,...,k_p) \in (\bZ_{\ge 0})^p$ is a multi-index, $X^{(\ell)}=\ad_{\Delta}^\ell(X)$, and 
\[ c_{pk}=\frac{(-1)^k}{k!}\frac{\Gamma(\frac{p}{2}+|k|)}{(k_1+1)(k_1+k_2+2)\cdots (k_1+\cdots+k_p+p)}, \qquad |k|=k_1+\cdots+k_p.\]
\end{theorem}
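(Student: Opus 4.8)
The plan is to follow the now-standard derivation of the local index formula, organized as in Higson's notes \cite{higson2006residue}: first secure the analytic input (meromorphic continuation of the relevant zeta functions, with pole locations controlled by generalized order), then produce an explicit $(b,B)$-cocycle by expanding and renormalizing a heat-kernel (JLO-type) cochain, and finally identify which residues survive and check the cocycle property. Since the statement is essentially that of Connes--Moscovici, the work is to assemble the ingredients in the axiomatic framework of \eqref{e:commutatororders}--\eqref{e:ellipticestimate}.

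First I would set up the complex powers and zeta functions. Using the contour-integral definition of $\Delta^{-z}$ together with the ellipticity estimate \eqref{e:ellipticestimate} and the finite-analytic-dimension hypothesis, one shows that for $X \in \D_q(A,D)$ the function $z \mapsto \Tr_s(X\Delta^{-z})$ is holomorphic for $\Re(z) > (q+n)/r$ and, by the analytic continuation assumption, extends meromorphically to $\bC$ with at most simple poles among the points $(q+n-j)/r$, $j \ge 0$. Here one also uses the order-shift facts of \eqref{e:commutatororders}: since $\ad_\Delta$ raises generalized order by $r-1$ and each $[D,a_i]$ has order $\tfrac{r}{2}-1$, the operator $P_k(a_0,\dots,a_p)$ has order at most $(\tfrac{r}{2}-1)p + (r-1)|k|$, which is exactly what forces $\res_{z=\frac{p}{2}+|k|}\Tr_s(P_k(a_0,\dots,a_p)\Delta^{-z})$ to vanish once $|k| > n-p$, so the sum defining $\varphi_p$ is finite, and likewise $\varphi_{2\ell}=0$ for $2\ell>n$.

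Next I would build the cocycle. Start from the heat/JLO cochain of the rescaled spectral triple $(A,H,tD)$, with $p$-component the simplex integral $\int_{\Sigma_p}\Tr_s\big(a_0\, e^{-s_0 t^2\Delta}[tD,a_1]\, e^{-s_1 t^2\Delta}\cdots[tD,a_p]\, e^{-s_p t^2\Delta}\big)\, ds$. Replacing each heat factor by its Laplace/contour representation and expanding in a Duhamel--Volterra series inside the algebra of generalized differential operators produces exactly the operators $a_0[D,a_1]^{(k_1)}\cdots[D,a_p]^{(k_p)}$: the $(-1)^k/k!$ is the Taylor coefficient of the Volterra expansion, the simplex integral of the resulting monomial in the $s_i$ gives the denominator $(k_1+1)(k_1+k_2+2)\cdots(k_1+\cdots+k_p+p)$, and the $\lambda^{-z}$ integral contributes the $\Gamma(\tfrac{p}{2}+|k|)$ factor, assembling $c_{pk}$. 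Extracting the $t$-independent part of this expansion — equivalently taking the residues at $z=\tfrac{p}{2}+|k|$ — collapses the entire cochain to the finite sum $\Phi=(\varphi_0,\varphi_2,\dots)$, and the Fredholm hypothesis (which makes $0$ isolated in the spectrum of $\Delta$) isolates the kernel of $D$ as the extra term $\Tr_s(a_0\Pi)$ in $\varphi_0$.

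Finally I would check that $\Phi$ is a $(b,B)$-cocycle and represents Connes' character. The heat cochain satisfies $(b+B)\Phi_t=0$ for every $t$ (the trace property of $\Tr_s$ and the graded-derivation property of $[D,-]$), and since $b$ and $B$ commute with the renormalization the identity descends to $\Phi$; alternatively one checks $b\varphi_p+B\varphi_{p+2}=0$ directly from an algebraic identity for the coefficients $c_{pk}$, reusing the combinatorics above. Cohomologousness to Connes' Chern character follows because $\Phi_t$ is cohomologous, for every $t$, to $\mathrm{ch}(F)$ with $F=D(1+D^2)^{-1/2}$ via an explicit transgression cochain, while the passage from $\Phi_t$ to $\Phi$ changes the cochain only by a coboundary; hence $\Phi$ and $\mathrm{ch}(F)$ pair identically with $K_0(A)$. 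I expect the main obstacle to be the analytic bookkeeping of the third paragraph: controlling the Duhamel expansion uniformly enough to take traces term by term, to interchange the simplex integral, the contour integral and the trace, and to verify that only finitely many terms carry poles — this is precisely where \eqref{e:ellipticestimate}, \eqref{e:commutatororders}, finite analytic dimension and the analytic continuation property must all be used together. By comparison, the identification of $c_{pk}$ and the cocycle check are routine once that framework is in place.
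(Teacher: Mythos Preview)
The paper does not prove this theorem. Theorem~\ref{t:CM} is stated as a known result of Connes and Moscovici \cite{connes1995local}, formulated in the language of Higson's notes \cite{higson2006residue}, and is used as background input for the subsequent computations; no proof is given in the paper beyond Remark~\ref{r:genorders}, which explains the order considerations bounding the sum over $k$ and $p$. So there is no ``paper's own proof'' to compare against.

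Your sketch is a reasonable outline of the standard argument as presented in \cite{higson2006residue} (and is consistent with how the paper frames the hypotheses): the order counts you give match Remark~\ref{r:genorders}, and the JLO/Duhamel route with the simplex combinatorics producing $c_{pk}$ is the expected one. If you intend to flesh this out, the main points that would need care are exactly the ones you flag --- uniform trace-class control in the Volterra expansion and the interchange of trace, contour integral, and simplex integral --- together with the transgression argument showing cohomologousness to the Chern character. None of this is addressed in the present paper, which simply quotes the result.
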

We will refer to $\Phi$ as the \emph{Connes-Moscovici cocycle} or the \emph{residue cocycle}. 
\begin{remark}
\label{r:genorders}
The bounds $p\le 2\lfloor \frac{n}{2} \rfloor$ and $|k|\le n-p$ come from generalized order considerations: the operator $P_k(a_0,...,a_p) \in \D(A,D)$ has generalized order at most $|k|(r-1)+p(\frac{r}{2}-1)$, hence by the analytic continuation assumption, $\frac{p}{2}+|k|$ will lie in the half plane where the supertrace is holomorphic provided that $n<p+|k|$.
\end{remark}

Being cohomologous to Connes' Chern character, the residue cocycle yields a formula for the index of the operator in the spectral triple, and more generally for the index pairing with an element in the K-theory group $K_0(A)$. A multi-linear functional $\varphi_p\colon A^{\otimes (p+1)}\rightarrow \bC$ may be extended to $M_k(A)^{\otimes (p+1)}$ by defining
\[ \varphi_p(m_0\otimes a_0,..., m_p\otimes a_p)=\tr(m_0\cdots m_p)\varphi_p(a_0,...,a_p).\]
For the following see for example \cite[Proposition 1.1, Theorem D]{getzler1989chern} and \cite[Theorem 2.27]{higson2006residue}.
\begin{theorem}
\label{t:CMindex}
Let $(A,H,D)$, $\Phi$ be as in Theorem \ref{t:CM} and assume in addition that $A$ is unital. Let $e \in M_k(A)$ be an idempotent. Then
\[ \tn{index}(e(D\otimes 1_k)e)=\varphi_0(e)+\sum_{p >0, \text{ even}} (-1)^{p/2}\frac{p!}{(p/2)!}\varphi_p(e-\tfrac{1}{2},e,...,e).\]
\end{theorem}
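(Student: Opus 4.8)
The plan is to deduce the formula by chaining together three facts. The first is already contained in Theorem \ref{t:CM}: $\Phi$ is a $(b,B)$-cocycle, cohomologous in the periodic cyclic $(b,B)$-bicomplex of $A$ to Connes' Chern character $\Ch(F)$ of the even Fredholm module over $A$ determined by $F=D(1+D^2)^{-1/2}$. Both sides of the asserted identity depend only on the class $[\Phi]$ in periodic cyclic cohomology: the left side trivially, and the right side because --- by the second fact below --- it is exactly the canonical (homotopy-invariant) pairing of $[\Phi]$ with $[e]\in K_0(A)$. Hence it suffices to establish the identity with $\Phi$ replaced by $\Ch(F)$, i.e. to show that this pairing equals $\tn{index}(e(D\otimes 1_k)e)$ (here, as usual, the index of the graded-odd Fredholm operator $e(D\otimes 1_k)e$ means the index of its component from the even to the odd part).

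The second fact is the explicit formula for the pairing of a periodic $(b,B)$-cocycle with an idempotent. Since $A$ is unital, $[e]$ is represented by a genuine idempotent $e\in M_k(A)$, to which one associates the standard chain
\[ \tn{ch}_\ast(e)=e+\sum_{p>0,\ \text{even}}(-1)^{p/2}\frac{p!}{(p/2)!}\,(e-\tfrac12)\otimes e^{\otimes p}, \]
regarded as an element of the even part of the (normalized) periodic cyclic $b,B$-complex of $M_k(A)$, with $e^{\otimes p}$ denoting $p$ tensor factors and matrix coefficients contracted against cochains via the extension convention recalled above. Using $e^2=e$ one checks that $\tn{ch}_\ast(e)$ is a cycle for the total differential and that its class is the Connes--Chern character of $[e]$; pairing $\Phi=(\varphi_0,\varphi_2,\dots)$ against it degree by degree reproduces precisely the right-hand side of the theorem. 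The shift $e\mapsto e-\tfrac12$ in the first argument and the weights $(-1)^{p/2}p!/(p/2)!$ are forced by this computation; see \cite[Proposition 1.1]{getzler1989chern} and \cite[Section 2]{higson2006residue} for the details.

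The third fact is Connes' index theorem for finitely summable Fredholm modules: the pairing of $\Ch(F)$ with $[e]$ equals $\tn{index}(eF^+e)$, where $F^+$ is the off-diagonal part of $F\otimes 1_k$, compressed by $e$ to a Fredholm operator from the even to the odd part of $e(H\otimes \bC^k)$. (The hypotheses of Theorem \ref{t:CM} guarantee the spectral triple is finitely summable, as noted after the definition of analytic dimension, so $\Ch(F)$ is defined.) It remains to identify $\tn{index}(eF^+e)$ with $\tn{index}(e(D\otimes 1_k)e)$: writing $F=g(D)$ for the odd normalizing function $g(x)=x(1+x^2)^{-1/2}$, the operators $F\otimes 1_k$ and $D\otimes 1_k$ have the same kernel, and the homotopy $D_t=D(1+tD^2)^{-1/2}$ ($t\in[0,1]$, with $D_0=D$ and $D_1=F$) keeps the compressions $e(D_t\otimes 1_k)e$ Fredholm, so their index is independent of $t$. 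Combining the three facts yields the stated formula.

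The step requiring the most care is the second one --- checking that $\tn{ch}_\ast(e)$ is genuinely a $(b,B)$-cycle with exactly the displayed normalization, and that its pairing with $\Phi$ produces the asserted constants. This is where the grading of the $(b,B)$-bicomplex, the passage between normalized and unnormalized complexes, and the interaction of $b$ and $B$ on the even components must all be tracked precisely; once the normalization is pinned down the remaining steps are formal or standard, and for them we simply quote \cite{getzler1989chern, higson2006residue}.
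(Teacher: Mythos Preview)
The paper does not actually prove this theorem: immediately before the statement it writes ``For the following see for example \cite[Proposition 1.1, Theorem D]{getzler1989chern} and \cite[Theorem 2.27]{higson2006residue}'' and gives no further argument. Your proposal is a correct outline of precisely how those cited results assemble into a proof---first replacing $\Phi$ by the cohomologous $\Ch(F)$ via Theorem~\ref{t:CM}, then invoking the standard $(b,B)$ Chern character cycle of an idempotent with the displayed normalization, and finally appealing to Connes' index pairing for finitely summable Fredholm modules---so it is entirely consistent with what the paper defers to the literature. The only point deserving a little more care is the last identification $\index(eF^+e)=\index(e(D\otimes 1_k)e)$: your homotopy $D_t=D(1+tD^2)^{-1/2}$ passes from an unbounded operator at $t=0$ to a bounded one for $t>0$, so ``keeps the compressions Fredholm'' needs to be read in the appropriate sense (e.g.\ in the regular/unbounded Fredholm framework, or by arguing directly that $D$ and $F$ represent the same $K$-homology class); this is routine, but worth flagging since the rest of your sketch is otherwise self-contained.
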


\subsection{Residues and heat kernel asymptotics}
Throughout this subsection we assume $(A,H,D)$ are as in Theorem \ref{t:CM}. The residue cocycle is closely related to the small time behavior of the heat kernel $e^{-t\Delta}$. Let $\Pi^\perp=1-\Pi$ be projection onto the orthogonal complement of $\ker(D)=\ker(\Delta)$. The following is well-known, but we include it for completeness.

\begin{proposition}
\label{p:tasymptotics}
For all $t>0$ the operator $P_k(a_0,...,a_p)e^{-t\Delta}$ is trace class. As $t \rightarrow 0^+$ the trace norm is $O(t^{-s})$ for any $s>\frac{n}{r}+|k|(1-\frac{1}{r})+p(\frac{1}{2}-\frac{1}{r})$. As $t \rightarrow \infty$ the trace norm of $P_k(a_0,...,a_p)e^{-t\Delta}\Pi^\perp$ decays exponentially.
\end{proposition}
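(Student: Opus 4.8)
The plan is to reduce the statement to two elementary facts about an arbitrary operator $X \in \D_q(A,D)$ of generalized order $q$: that $Xe^{-t\Delta}$ is trace class with $\|Xe^{-t\Delta}\|_1 = O(t^{-s})$ as $t\to 0^+$ for every $s > \frac{q+n}{r}$, and that $\|Xe^{-t\Delta}\Pi^\perp\|_1$ decays exponentially as $t\to\infty$. Applying this with $X = P_k(a_0,\dots,a_p)$ then finishes the proof, once one checks (as already noted in Remark \ref{r:genorders}) that $P_k$ has generalized order at most $q := |k|(r-1) + p(\tfrac r2 - 1)$: indeed $a_0 \in \D_0(A,D)$, each $[D,a_i] \in \D_{\frac r2 - 1}(A,D)$ and each application of $\ad_\Delta$ raises the order by $r-1$, both by \eqref{e:commutatororders}, so $[D,a_i]^{(k_i)} \in \D_{k_i(r-1)+\frac r2 - 1}(A,D)$ and the product lies in $\D_q(A,D)$. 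A short manipulation gives $\frac{q+n}{r} = \frac nr + |k|(1-\tfrac1r) + p(\tfrac12 - \tfrac1r)$, matching the exponent in the statement.

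For the behaviour as $t\to 0^+$ I would split $e^{-t\Delta} = \Pi + e^{-t\Delta}\Pi^\perp$, which is valid since $\Delta$ vanishes on $\ker(D) = \ker(\Delta)$. As $D$ is Fredholm, $\Pi$ is finite rank, so $X\Pi$ is a finite-rank operator of constant trace norm. Given $s > \frac{q+n}{r}$, fix $z$ with $\Re(z) = s$; by the finite analytic dimension hypothesis $X\Delta^{-z}$ extends to a trace-class operator, and since the contour defining $\Delta^{-z}$ separates $0$ from the positive spectrum, $\Delta^{-z}$ annihilates $\ker(\Delta)$, i.e. $\Delta^{-z} = \Delta^{-z}\Pi^\perp$, whence $Xe^{-t\Delta}\Pi^\perp = (X\Delta^{-z})(\Delta^z e^{-t\Delta}\Pi^\perp)$. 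The second factor is a bounded function of $\Delta$ with
\[ \|\Delta^z e^{-t\Delta}\Pi^\perp\| \;=\; \sup_{\lambda \in \sigma(\Delta),\ \lambda > 0} \lambda^{s} e^{-t\lambda} \;\le\; \sup_{\lambda>0}\lambda^{s}e^{-t\lambda} \;=\; (s/e)^{s}\, t^{-s}, \]
so $\|Xe^{-t\Delta}\Pi^\perp\|_1 \le \|X\Delta^{-z}\|_1 \cdot (s/e)^s\, t^{-s}$ by $\|AB\|_1 \le \|A\|_1\|B\|$. Adding the $O(1)$ contribution of $X\Pi$ gives trace-class-ness for every $t>0$ together with the claimed $O(t^{-s})$ bound.

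For $t\to\infty$, Fredholmness of $D$ supplies a spectral gap $\sigma(\Delta) \subset \{0\}\cup[\delta,\infty)$ with $\delta>0$. For $t\ge 1$ write $Xe^{-t\Delta}\Pi^\perp = (Xe^{-\Delta}\Pi^\perp)(e^{-(t-1)\Delta}\Pi^\perp)$; the first factor is trace class by the case $t=1$ just treated, and $\|e^{-(t-1)\Delta}\Pi^\perp\| \le e^{-(t-1)\delta}$, so $\|Xe^{-t\Delta}\Pi^\perp\|_1 = O(e^{-\delta t})$. The one delicate point — which I expect to be the main obstacle — is the operator-theoretic bookkeeping needed to legitimize the factorizations above: the elements of $\D(A,D)$ are unbounded, so one must argue on $H^\infty$ (or on $\dom(\Delta^{q/r})$) and invoke the elliptic estimate \eqref{e:ellipticestimate}, which bounds $X$ relative to $(1+\Delta)^{q/r}$, in order to see that $\Delta^{-z}$ maps $H$ into $\dom(X)$ for $\Re(z)$ large and that the products extend by continuity to bounded, indeed trace-class, operators on $H$. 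This is standard (cf. \cite{higson2006residue}); the analytic estimates and the order count for $P_k$ are routine.
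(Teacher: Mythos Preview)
Your proof is correct and follows essentially the same strategy as the paper: factor $P_k e^{-t\Delta}$ as a trace-class operator times a bounded operator and control the latter by functional calculus, then handle $t\to\infty$ by peeling off $e^{-\Delta}$ and using the spectral gap. The only cosmetic difference is that the paper uses $(1+\Delta)^{-s}$ in place of your $\Delta^{-z}$ together with the kernel split, which avoids treating $X\Pi$ separately; your version stays closer to the finite-analytic-dimension hypothesis as literally stated, and both are equivalent up to the routine domain bookkeeping you already flagged.
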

\begin{proof}
Let $s \in \bR$ be as in the statement. By order considerations (see Remark \ref{r:genorders}), the operator $P_k(a_0,...,a_p)(1+\Delta)^{-s}$ is trace class. On the other hand by functional calculus $(1+\Delta)^s e^{-t\Delta}$ is a bounded operator with norm at most $C_s t^{-s} e^t$, where $C_s$ is a constant. It follows that $P_k(a_0,...,a_p)e^{-t\Delta}=P_k(a_0,...,a_p)(1+\Delta)^{-s}(1+\Delta)^s e^{-t\Delta}$ is trace class for all $t>0$, and that its trace norm has the claimed asymptotic behavior as $t \rightarrow 0^+$. For $t>1$,
\[ P_k(a_0,...,a_p)e^{-t\Delta}\Pi^\perp=P_k(a_0,...,a_p)e^{-\Delta}e^{-(t-1)\Delta}\Pi^\perp \]
thus the trace norm is bounded by the trace norm of $P_k(a_0,...,a_p)e^{-\Delta}$ times the operator norm $\|\Pi^\perp e^{-(t-1)\Delta}\Pi^\perp\|=e^{-(t-1)b}$ where $b>0$ is the lower bound of $\Pi^\perp\Delta\Pi^\perp$ on $\Pi^\perp H$.
\end{proof}

\begin{proposition}
\label{p:heatkernelversion}
Suppose the supertrace admits an asymptotic expansion as $t \rightarrow 0^+$ of the form
\[ \Tr_s(P_k(a_0,...,a_p)e^{-t\Delta})\sim t^{-N_k}\sum_{s\ge 0} t^{s} \psi_{k,s}(a_0,...,a_p). \]
Then the components of the residue cocycle are given by
\[ \varphi_p(a_0,...,a_p)=\sum_{|k|\le n-p} c_{pk}' \psi_{k,s_{p,k}}(a_0,...,a_p), \qquad s_{p,k}=N_k-\frac{p}{2}-|k|,\]
where
\[ c_{pk}'=\frac{(-1)^k}{k!\cdot (k_1+1)(k_1+k_2+2)\cdots (k_1+\cdots+k_p+p)}.\]
\end{proposition}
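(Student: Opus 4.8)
The plan is to pass from the complex powers $\Delta^{-z}$ appearing in Theorem \ref{t:CM} to the heat operator $e^{-t\Delta}$ by a Mellin transform, and then read the residues of the zeta functions directly off the hypothesised small-$t$ expansion. Write $P_k = P_k(a_0,\dots,a_p)$. Since the contour $\C$ defining $\Delta^{-z}$ separates $0$ from the strictly positive part of the spectrum of $\Delta$, the operator $\Delta^{-z}$ annihilates $\ker\Delta=\ker D$ and agrees with the usual complex power on $\Pi^\perp H$; hence, for $\Re(z)$ large,
\[ \Tr_s\big(P_k\Delta^{-z}\big)=\frac{1}{\Gamma(z)}\int_0^\infty t^{z-1}\,\Tr_s\big(P_k\,e^{-t\Delta}\Pi^\perp\big)\,\d t. \]
Proposition \ref{p:tasymptotics} justifies both the convergence of this integral for $\Re(z)$ large and the interchange of $\Tr_s$ with $\int$: the decay as $t\to\infty$ is exponential, while as $t\to0^+$ the trace norm of $P_k e^{-t\Delta}$ is $O(t^{-s})$, which is integrable against $t^{\Re(z)-1}$ once $\Re(z)>s$.

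Next I would split the integral as $\int_0^1+\int_1^\infty$; the second summand is entire in $z$, so all poles of $z\mapsto\Gamma(z)\Tr_s(P_k\Delta^{-z})$ come from $\int_0^1$. Using $\Tr_s(P_k e^{-t\Delta}\Pi^\perp)=\Tr_s(P_k e^{-t\Delta})-\Tr_s(P_k\Pi)$ and $e^{-t\Delta}\Pi=\Pi$ (so that $\Tr_s(P_k\Pi)$ is $t$-independent), the hypothesised expansion gives, for every $J$,
\[ \Tr_s\big(P_k e^{-t\Delta}\Pi^\perp\big)=\sum_{j=0}^{J}\psi_{k,j}(a_0,\dots,a_p)\,t^{j-N_k}-\Tr_s(P_k\Pi)+O\big(t^{J+1-N_k}\big)\quad(t\to0^+). \]
Integrating term by term against $t^{z-1}$ on $(0,1]$, the summand $t^{j-N_k}$ contributes a simple pole at $z=N_k-j$ with residue $\psi_{k,j}(a_0,\dots,a_p)$, the constant $-\Tr_s(P_k\Pi)$ contributes a simple pole at $z=0$ only, and the remainder is holomorphic for $\Re(z)>N_k-J-1$, so taking $J$ large makes this rigorous. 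Since $1/\Gamma$ is entire and nonzero at every $z>0$, evaluating at $z=\frac{p}{2}+|k|$ gives, for $p>0$ (so that $\frac{p}{2}+|k|\ge 1$),
\[ \res_{z=\frac{p}{2}+|k|}\Tr_s\big(P_k\Delta^{-z}\big)=\frac{1}{\Gamma(\tfrac{p}{2}+|k|)}\,\psi_{k,s_{p,k}}(a_0,\dots,a_p),\qquad s_{p,k}=N_k-\tfrac{p}{2}-|k|, \]
with the understanding that this is $0$ (there being no pole) when $s_{p,k}$ is not a nonnegative integer. Substituting into Theorem \ref{t:CM} over the same range $|k|\le n-p$ and using $c_{pk}/\Gamma(\tfrac{p}{2}+|k|)=c_{pk}'$ yields the asserted formula for $\varphi_p$, $p>0$. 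For $p=0$ the same computation gives $\res_{z=0}\big(\Gamma(z)\Tr_s(a_0\Delta^{-z})\big)=\psi_{0,N_0}(a_0)-\Tr_s(a_0\Pi)$, and adding back the term $\Tr_s(a_0\Pi)$ present in the Connes-Moscovici formula for $\varphi_0$ leaves $\varphi_0(a_0)=\psi_{0,N_0}(a_0)=c_{00}'\,\psi_{0,s_{0,k}}(a_0)$.

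The analytic inputs — convergence of the Mellin integral and legitimacy of reading residues off the expansion term by term — are entirely standard. The point that needs a little care is the bookkeeping around the kernel projection: one must verify that $\Delta^{-z}$ genuinely kills $\ker\Delta$, so that the Mellin identity is correct with $\Pi^\perp$ inserted, and track the $t$-independent quantity $\Tr_s(P_k\Pi)$, which can only generate a pole at $z=0$ — harmless for $p>0$ and exactly compensated by the explicit $\Tr_s(a_0\Pi)$ correction for $p=0$. This is precisely what lets the single formula of the proposition hold uniformly in $p$.
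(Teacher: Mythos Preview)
Your proof is correct and follows essentially the same route as the paper: Mellin transform to pass from $\Delta^{-z}$ to $e^{-t\Delta}\Pi^\perp$, split $\int_0^\infty=\int_0^1+\int_1^\infty$ with the tail holomorphic by Proposition~\ref{p:tasymptotics}, substitute $\Pi^\perp=1-\Pi$ and read off the residues term by term from the asymptotic expansion, treating the $p=0$ kernel correction exactly as the paper does. The only cosmetic difference is that you phrase the endgame as computing $\res\Tr_s(P_k\Delta^{-z})$ and then multiplying by $c_{pk}$, whereas the paper keeps the factor $\Gamma(z)$ on the left throughout; the content is the same.
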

\begin{proof}
For brevity let $P_k=P_k(a_0,...,a_p)$. Since $\Delta^{-z}$ was defined to vanish on $\ker(\Delta)$, one has $\Delta^{-z}=\Delta^{-z}\Pi^\perp$. By the Mellin transform,
\[ \Gamma(z)P_k\Delta^{-z}=\int_0^\infty t^{z-1}P_ke^{-t\Delta}\Pi^\perp \d t \]
where the integral converges in the trace norm when $\Re(z)\gg 0$ by Proposition \ref{p:tasymptotics}. Therefore taking the trace and splitting the range of integration, we have for $\Re(z)\gg 0$,
\begin{equation} 
\label{e:int1}
\Gamma(z)\Tr_s(P_k\Delta^{-z})=\int_0^1 t^{z-1}\Tr_s(P_ke^{-t\Delta}\Pi^\perp) \d t+\int_1^\infty t^{z-1}\Tr_s(P_ke^{-t\Delta}\Pi^\perp) \d t
\end{equation}
Proposition \ref{p:tasymptotics} implies further that the integral over $(1,\infty)$ is a holomorphic function $f_k(z)$. Substituting $\Pi^\perp=1-\Pi$, $e^{-t\Delta}\Pi=\Pi$ in \eqref{e:int1} yields 
\begin{equation} 
\label{e:int2}
\Gamma(z)\Tr_s(P_k\Delta^{-z})=\int_0^1 t^{z-1}\Tr_s(P_ke^{-t\Delta}) \d t-\frac{1}{z}\Tr_s(P_k\Pi)+f_{k}(z).
\end{equation} 
Using the asymptotic expansion of $\Tr_s(P_ke^{-t\Delta})$ as $t \rightarrow 0^+$, the integral on the right hand side has an analytic continuation (with simple poles) to a neighborhood of $\frac{p}{2}+|k|$, namely
\[ \sum_{s=0}^{M} \frac{1}{s-N_k+z}\psi_{k,s}+\int_0^1 t^{z-1} R_M(t)\d t \]
for any $M\ge \tn{max}(0,N_k-\frac{p}{2}-|k|+1)$ where $R_M(t)=o(t^{M-N_k})$ is the remainder. The analytic continuations of the two sides of \eqref{e:int2} must agree near $z=\frac{p}{2}+|k|$ and consequently we may take residues of both sides, which gives the result. Note in particular that when $p>0$ the second term on the right hand side of \eqref{e:int2} is holomorphic near $z=\frac{p}{2}+|k|$ so does not contribute to the residue. When $p=0$ (so $k=\emptyset$), $P_{\emptyset}=a_0$ and the residue of the second term is $-\Tr_s(a_0\Pi)$.
\end{proof}

\section{Dirac-type operators and Getzler order}
This section is mostly expository. We describe Bismut's generalization \cite{bismut1989local} of the Lichnerowicz formula to Dirac-type operators $D=D^{LC}+c(B)$. We then specialize to the case where $B$ is a $3$-form and briefly introduce a Getzler symbol calculus `adapted to $B$', which is a slight variation of the usual Getzler calculus. The observation that a variation of the usual Getzler calculus is appropriate  for studying the heat operator $e^{-tD^2}$ is due to Bismut \cite{bismut1989local}. Our discussion of Getzler calculus draws from the approaches in \cite{BerlineGetzlerVergne, roe1998elliptic}, and as in these references, we will only need a less elaborate version of Getzler's original calculus \cite{getzler1983pseudodifferential}, sufficient for handling compositions of the form $P\circ Q$ where $P$ is a differential operator and $Q$ is a smoothing operator. Throughout we will work on a closed Riemannian spin manifold, the extension to operators acting on general Clifford modules being well understood (cf. \cite{BerlineGetzlerVergne}).

\subsection{Dirac-type operators}
Let $(M^n,g)$ be a closed Riemannian spin manifold with $\bZ_2$-graded spinor bundle $S$, and let $c\colon \bC l(T^*M) \xrightarrow{\sim} \End(S)$ denote the Clifford action. Equip $S$ with a Hermitian structure such that for $v \in T^*M$, $c(v)$ is skew-Hermitian. There is a canonical isomorphism (the \emph{Clifford symbol map}) of $\bZ_2$-graded complex vector spaces $\bC l(T^*_xM) \simeq \wedge T^*_xM_\bC$ that sends $e_{i_1}\cdots e_{i_k} \in \Cl(T^*_xM)$ to $e_{i_1}\wedge \cdots \wedge e_{i_k} \in \wedge T^*_xM$, where $e_1,...,e_n$ is any orthonormal frame of $T^*_xM$ (cf. \cite{BerlineGetzlerVergne}), and we will use this isomorphism to identify $\bC l(T^*M)$ and $\wedge T^*M_\bC$.

The Levi-Civita connection $\nabla^{LC}$ on $TM\simeq T^*M$ determines a canonical connection (the spin connection) on $S$ that we also denote by $\nabla^{LC}$. \emph{The spin Dirac operator} $D^{LC}$ acting on smooth sections of $S$ is the odd, essentially self-adjoint, first-order differential operator given by the composition
\begin{equation} 
\label{e:Dirac}
\Gamma(S)\xrightarrow{\nabla^{LC}} \Gamma(T^*M\otimes S) \xrightarrow{c}\Gamma(S).
\end{equation}
More generally a \emph{Dirac operator} acting on smooth sections of $S$ is an operator given by a composition similar to \eqref{e:Dirac}, but allowing the spin connection $\nabla^{LC}$ to be replaced with a connection of the form $\nabla^{LC}+\sqrt{-1}a$ where $a$ is a $\bR$-valued $1$-form (equivalently, twist $S$ by a trivial Hermitian line bundle and couple $D^{LC}$ to it using a possibly non-trivial Hermitian connection). In this article by a \emph{Dirac-type operator} we shall mean an odd, essentially self-adjoint first order differential operator $D$ acting on smooth sections of $S$ that differs from $D^{LC}$ by a smooth bundle endomorphism. The most general such operator is of the form
\[ D=D^{LC}+c(B) \]
where $B \in \Gamma(\wedge T^*M_\bC)$ is a differential form of odd (possibly mixed) degree satisfying $c(B)^*=c(B)$. When $B$ is a $1$-form, $D$ is a Dirac operator in the above sense.

For any $B$ as above, there is a spectral triple $(C^\infty_c(M),L^2(M,S),D)$ satisfying all the hypotheses of the Connes-Moscovici theorem. All of these spectral triples represent the same element in the K-homology group of the closed manifold $M$, and hence although their residue cocycles will differ, they are guaranteed to be $(b,B)$-cohomologous. The operator $\Delta=D^2$ is a generalized Laplacian in the sense of \cite[Chapter 2]{BerlineGetzlerVergne}, hence the heat kernel $e^{-t\Delta}$ has an asymptotic expansion as $t \rightarrow 0^+$ to which Proposition \ref{p:heatkernelversion} applies. 

\subsection{Bismut's Lichnerowicz formula}
Let $B \in \Gamma(\wedge T^*M_\bC)$ be a differential form of odd degree such that $c(B)^*=c(B)$. Define a new connection $\nabla \colon \Gamma(S)\rightarrow \Gamma(T^*M\otimes S)$ on $S$ given by
\begin{equation} 
\label{e:adaptedconnection}
\nabla_X=\nabla^{LC}_X+c(\iota_X B), \qquad X \in \mf{X}(M) 
\end{equation}
where $\iota_X$ denotes contraction with the vector field $X$. (To avoid a possible misconception, we mention that if $e_1,...,e_n$ is a local orthonormal frame, then $\sum_i c(e_i)\nabla_{e_i}=D^{LC}+k\,c(B)$ if $B \in \Gamma(\wedge^k T^*M_\bC)$, which is \emph{not} the operator $D$ unless $k=1$. The operator $D$ is however the Dirac operator associated to the Clifford superconnection $\bA=\nabla^{LC}+B$ in the sense of \cite[p.116]{BerlineGetzlerVergne}.) The formal adjoint of $\nabla$ with respect to the Riemannian $L^2$ inner products is denoted $\nabla^* \colon \Gamma(T^*M\otimes S)\rightarrow \Gamma(S)$.

The odd differential form $B$ gives rise to a collection $B_j$, $j=1,2,3,...$ of even degree differential forms (with degrees $(k-2j-1)^2$ if $B$ has odd degree $k$), given in terms of a local orthonormal frame $e_1,...,e_n$ by
\[ B_j=\sum_{i_1<\cdots <i_{2j+1}} \big(\iota(e_{i_1})\cdots \iota(e_{i_{2j+1}})B\big)^2. \]
(The result is independent of the choice of local orthonormal frame.) Bismut proved the following formula for the square of the Dirac-type operator $D=D^{LC}+c(B)$.
\begin{proposition}[\cite{bismut1989local}, Theorem 1.1]
\label{p:BismutLichnerowicz}
Let $D=D^{LC}+c(B)$ where $D^{LC}$ is the spin Dirac operator and $B$ is an odd differential form. Let $\Delta=D^2$. Then
\[ \Delta=\nabla^*\nabla+\frac{\kappa}{4}+c(\d B)+2\sum_{j\ge 1} (-1)^{j}jc(B_j),\]
where $\kappa$ is the scalar curvature. When $B \in \Gamma(\wedge^3 T^*M)$ the formula simplifies to
\[ \Delta=\nabla^*\nabla+\frac{\kappa}{4}+c(\d B)-2|B|^2.\]
\end{proposition}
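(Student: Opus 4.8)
The proof is essentially Bismut's: one squares $D=D^{LC}+c(B)$ by hand, inserts the classical Lichnerowicz formula for $(D^{LC})^2$, and reduces the surviving zeroth-order terms to a pointwise identity in $\Cl(T^*_xM)\cong\wedge T^*_xM_\bC$. (Equivalently one may recognise $D$ as the Dirac operator of the Clifford superconnection $\nabla^{LC}+B$ and quote the superconnection Lichnerowicz formula of \cite{BerlineGetzlerVergne}, but unwinding that comes down to the same computation.)

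First I would write $\Delta=(D^{LC})^2+[D^{LC},c(B)]+c(B)^2$ (the bracket is the anticommutator, both operators being odd) and insert $(D^{LC})^2=(\nabla^{LC})^*\nabla^{LC}+\kappa/4$. Working in a local orthonormal frame $e_1,\dots,e_n$ so that $D^{LC}=\sum_i c(e_i)\nabla^{LC}_{e_i}$, and using that $\nabla^{LC}$ differentiates the Clifford action, one gets $[D^{LC},c(B)]=\sum_i c(e_i)c(\nabla^{LC}_{e_i}B)+\sum_i\big(c(e_i)c(B)+c(B)c(e_i)\big)\nabla^{LC}_{e_i}$. Two elementary facts handle the first-order part: via the Clifford symbol map $c(v)c(\omega)=v\wedge\omega-\iota_v\omega$ and $c(\omega)c(v)=(-1)^{|\omega|}(v\wedge\omega+\iota_v\omega)$, hence $c(e_i)c(B)+c(B)c(e_i)=-2c(\iota_{e_i}B)$ when $B$ has odd degree; and since $\nabla^{LC}$ is torsion-free, $\sum_i e^i\wedge\nabla^{LC}_{e_i}B=\d B$ and $\sum_i\iota_{e_i}\nabla^{LC}_{e_i}B=-\d^*B$, so $\sum_i c(e_i)c(\nabla^{LC}_{e_i}B)=c(\d B)+c(\d^*B)$.

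Next I would expand $\nabla^*\nabla$ for the deformed connection $\nabla=\nabla^{LC}+c(\iota_\cdot B)$: at a point, in a frame parallel there, $\nabla^*\nabla=-\sum_i(\nabla^{LC}_{e_i}+c(\iota_{e_i}B))^2$, which — using the Leibniz rule for $\nabla^{LC}$ together with $\nabla^{LC}_{e_i}(\iota_{e_i}B)=\iota_{e_i}\nabla^{LC}_{e_i}B$ at the point — equals $(\nabla^{LC})^*\nabla^{LC}+c(\d^*B)-2\sum_i c(\iota_{e_i}B)\nabla^{LC}_{e_i}-\sum_i c(\iota_{e_i}B)^2$. Subtracting the two computations, the $\nabla^{LC}$-first-order terms and the two $c(\d^*B)$ terms cancel, leaving the exact identity
\[
\Delta=\nabla^*\nabla+\frac{\kappa}{4}+c(\d B)+\Big(c(B)^2+\sum_i c(\iota_{e_i}B)^2\Big),
\]
so the proposition is equivalent to the purely algebraic, pointwise statement $c(B)^2+\sum_i c(\iota_{e_i}B)^2=2\sum_{j\ge1}(-1)^j j\,c(B_j)$.

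For this last identity I would write $c(\iota_{e_i}B)=-\tfrac12\big(c(e_i)c(B)+c(B)c(e_i)\big)$, square, sum over $i$, and reduce everything (using $c(e_i)^2=-1$) to the standard Clifford–Dirac identity $\sum_i c(e_i)\,\xi\,c(e_i)=(-1)^{m+1}(n-2m)\,\xi$ for $\xi$ of symbol-degree $m$. The reversion anti-automorphism of $\Cl$ acts on symbol-degree-$m$ elements by $(-1)^{m(m-1)/2}$ and fixes $c(B)^2$, so $c(B)^2$ is supported in symbol-degrees $\equiv0\pmod4$; feeding this in, one obtains for $B$ homogeneous of odd degree $k$ that $c(B)^2+\sum_i c(\iota_{e_i}B)^2=\sum_{m\ge0}(2m-k+1)\langle c(B)^2\rangle_{4m}$, where $\langle\cdot\rangle_{4m}$ is the degree-$4m$ part. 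The Clifford product expansion then identifies $\langle c(B)^2\rangle_{2(k-2j-1)}$ with $\pm B_j$, and the one genuinely delicate point — the main obstacle — is to pin down this sign and check it is $(-1)^{j+1}$, so that the coefficients on the two sides agree term by term. For a $3$-form $B$ everything is immediate: $B_j=0$ for $j\ge2$ (a $3$-form cannot be contracted with five or more vectors), $B_1=\sum_{a<b<c}B(e_a,e_b,e_c)^2=|B|^2$, and $\langle c(B)^2\rangle_0=|B|^2$, so the identity collapses to $c(B)^2+\sum_i c(\iota_{e_i}B)^2=-2|B|^2$ and yields the stated simplification.
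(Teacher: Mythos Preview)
The paper does not give its own proof of this proposition; it is quoted from \cite{bismut1989local} (as the attribution in the statement indicates) and used as input for the rest of the article. Your sketch reproduces Bismut's argument and is correct: the reduction of $\Delta-\nabla^*\nabla-\tfrac{\kappa}{4}-c(\d B)$ to the purely pointwise Clifford-algebra identity
\[
c(B)^2+\sum_i c(\iota_{e_i}B)^2=2\sum_{j\ge1}(-1)^j j\,c(B_j)
\]
is exactly right, and your treatment of the $3$-form case (the only one the paper actually uses) is complete. One small remark: your degree count for the general algebraic identity takes $B$ homogeneous of a fixed odd degree $k$, whereas the proposition as stated allows inhomogeneous odd $B$; since both sides are quadratic in $B$, the cross-terms between homogeneous components of different degrees need separate (routine) bookkeeping before the reversion/degree argument applies term by term.
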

In terms of a local orthonormal frame $e=\{e_1,...,e_n\}$,
\[ \nabla^*\nabla=-\Big(\sum_{i=1}^n \nabla_{e_i}^2\Big)-\nabla_{\nu_e}, \qquad \nu_e=\sum_{i=1}^n \nabla^{LC}_{e_i}e_i=-\sum_{i=1}^n \tn{div}_g(e_i)e_i.\]
The formula in Proposition \ref{p:BismutLichnerowicz} appears slightly simpler than \cite[Theorem 1.1]{bismut1989local} because we have omitted twists by an auxiliary bundle.

Throughout the rest of the article, we restrict to the case where $B$ is a $3$-form. The condition $c(B)^*=c(B)$ implies $B \in \Gamma(\wedge^3 T^*M)$ is a real $3$-form. A special feature of the $3$-form case is that the connection $\nabla$ is the lift (via the isomorphism $\mf{o}_n=\mf{so}_n\simeq \mf{spin}_n$) of a metric connection on the tangent bundle---also denoted $\nabla$ when there is no risk of confusion---given by the formula
\begin{equation} 
\label{e:nablaM}
\nabla=\nabla^{LC}+B_{\mf{o}}, \qquad B_{\mf{o}} \in \Omega^1(M,\mf{o}(TM)), 
\end{equation}
where for any $A \in \Omega^k(M)$, $k\ge 2$ we define $A_{\mf{o}} \in \Omega^{k-2}(M,\mf{o}(TM))$ using the metric:
\[ g(A_{\mf{o}}(X_1,...,X_{k-2})X_{k-1},X_k)=2A(X_1,...,X_k).\]
The lift property ensures that
\[ [\nabla_X,c(Y)]=c(\nabla_X Y). \]
The extra skew-symmetry of \eqref{e:nablaM} implies that the torsion $T_\nabla$ of $\nabla$ is skew-symmetric,
\[ g(T_\nabla(X,Y),Z)=4B(X,Y,Z),\]
and $\nabla_X X=\nabla^{LC}_X X$ for all $X \in \mf{X}(M)$, hence $\nabla, \nabla^{LC}$ have the same geodesics.
\begin{remark}
When $\d B=0$ there is an interesting perspective on $\nabla$ coming from generalized geometry in the sense of Hitchin, where $\nabla$ may be thought of as the analogue of the Levi-Civita connection when doing geometry with a closed $3$-form `background', see for example \cite{hitchin2010lectures, gualtierithesis}.
\end{remark}

\subsection{Getzler calculus}\label{s:Getzler}
Let $B \in \Gamma(\wedge^3 T^*M)$ be a $3$-form on $M$, and let $\nabla$ denote the corresponding connection on $TM$ (equation \eqref{e:nablaM}) or its lift to $S$ (equation \eqref{e:adaptedconnection}). A key feature of the Getzler symbol calculus applied to the spin Dirac operator $D^{LC}$ is that, by the Lichnerowicz formula, the square $(D^{LC})^2$ has order $2$. The appearance of the connection $\nabla$ in Proposition \ref{p:BismutLichnerowicz} suggests that in the study of heat kernel asymptotics for the operator $D$, a variation of the Getzler calculus that replaces the Levi-Civita connection $\nabla^{LC}$ with $\nabla$ should be used.

Let
\[ \pr \colon TM \rightarrow M, \qquad s \colon M \times M \rightarrow M,  \]
be the bundle projection, resp. projection map to the second factor (i.e. the source map of the pair groupoid $M \times M$). The Riemannian exponential map identifies a tubular neighborhood $\bm{U}$ of the $0$-section in $TM$ with a neighborhood $U$ of the diagonal in $M \times M$:
\begin{equation} 
\label{e:RiemExp}
\exp\colon v\in \bm{U}\subset TM \mapsto (\exp_y(v),y) \in U \subset M \times M 
\end{equation}
where $v \in T_yM$. The inverse of the diffeomorphism \eqref{e:RiemExp} is denoted 
\begin{equation}
\label{e:RiemExpInv}
\br{x}\colon U \rightarrow \bm{U}\subset TM. 
\end{equation}
The maps \eqref{e:RiemExp}, \eqref{e:RiemExpInv} intertwine $s|_U$, $\pr|_{\br{U}}$. For $y \in M$ let $U_y=U\cap (M\times \{y\})$, a geodesic ball around $y$, and let $\br{x}_y \colon U_y \rightarrow T_yM$ be the restriction of $\br{x}$ to $U_y$.

Using $\nabla$-parallel translation along radial geodesics, followed by the Clifford symbol map, we obtain isomorphisms
\begin{equation}
\label{e:synch}
S\boxtimes S^*|_U \simeq s^*(S\otimes S^*)=s^*\End(S)=s^*\bC l(T^*M)\simeq s^*\!\wedge\!T^*M_\bC.
\end{equation}
Using the inverse of the exponential map \eqref{e:RiemExpInv} on the base combined with the isomorphism \eqref{e:synch} on the fibres, we obtain an isomorphism
\begin{equation}
\label{e:isosections}
\Gamma(S\boxtimes S^*|_U) \simeq \Gamma(\pr^*\!\!\wedge\!T^*M_\bC|_{\bm{U}}),
\end{equation}
that will be used frequently below.

\begin{definition}
\label{d:$r$-fibreddo}
An \emph{$s$-fibred differential operator} on $\Gamma(S\boxtimes S^*|_U)$ is differential operator $T$ with smooth coefficients on $\Gamma(S\boxtimes S^*|_U)$ given by a family $\{T_y\}_{y \in M}$, where $T_y$ is a differential operator acting on $\Gamma(S|_{U_y}\boxtimes S_y^*)$. The space of $s$-fibred differential operators forms an algebra under composition. An $s$-fibred differential operator $T$ is said to \emph{vanish on the diagonal} if $T(\Gamma(S\boxtimes S^*|_U))$ is contained in the subspace of $\Gamma(S\boxtimes S^*|_U)$ consisting of sections that vanish on the diagonal. Using the identification \eqref{e:isosections}, $T$ yields a differential operator
\[ \bm{T}\colon \Gamma(\pr^*\!\!\wedge\!T^*M_\bC|_{\bm{U}}) \rightarrow \Gamma(\pr^*\!\!\wedge\!T^*M_\bC|_{\bm{U}}) \]
given by a family $\{\bm{T}_y\}_{y \in M}$ of differential operators along the fibres of $\bm{U} \rightarrow M$.
\end{definition}
We mention several examples that will appear frequently below.
\begin{example}
\label{ex:diffopfibred}
Any differential operator $P$ \emph{on $M$} acting on sections of $S$ determines an $s$-fibred differential operator by `copying' $P$ on each $M \times \{y\}\subset M \times M$, and using the fact that $s^*S^*|_{M\times \{y\}}\simeq M\times S_y^*$ is canonically trivial to extend $P$ to act on sections of $S\boxtimes S^*|_{M\times \{y\}}=S\boxtimes S_y^*$.
\end{example}
\begin{example}
\label{ex:varrho}
Smooth functions on $U$ act by multiplication on $\Gamma(S\boxtimes S^*|_U)$ and hence determine $s$-fibred differential operators. One example that appears frequently below is the function $\br{x}^2=g(\br{x},\br{x})$ giving the squared distance to the diagonal. Further examples are the powers $\varrho^r$ for $r \in \bR$, where by definition the restriction $\varrho^r_y$ of $\varrho^r$ to the normal coordinate chart $U_y=U\cap (M\times \{y\})$ is
\[ \varrho_y^r(\br{x}_y)=\det(g_{ab})^{\frac{r}{4}} \]
where $g_{ab}=g(\partial_a,\partial_b)$ are the components of the metric in the normal coordinate system on $U_y$.
\end{example}
\begin{example}
Generalizing the previous example, any smooth section $Q \in \Gamma(S\boxtimes S^*|_U)$ determines an $s$-fibred differential operator, via the identification $\Gamma(S\boxtimes S^*|_U)\simeq \Gamma(s^*\End(S))$, and letting $Q|_U$ act on sections of $\Gamma(S\boxtimes S^*|_U)\simeq \Gamma(\pr_2^*\End(S))$ by pointwise composition of endomorphisms of $S$.
\end{example}
\begin{example}
\label{ex:Euler}
Let $\E$ be the vector field on $U \simeq \bm{U}$ corresponding to the Euler vector field on $\bm{U}\subset TM$ under the exponential map, whose integral curves are radial geodesics. Then $\nabla_\E$ is an $s$-fibred differential operator that vanishes on the diagonal. The identification \eqref{e:synch} trivializes the bundle $S$ along radial geodesics, and hence instead of $\nabla_\E$ we will often simply write $\E$. 
\end{example}

The inclusion $TM\times \bC=\pr^*\!\!\wedge^0T^*M_\bC \hookrightarrow \pr^*\!\!\wedge\!T^*M_\bC$ induces a map on sections $\eta \colon C^\infty(TM)\hookrightarrow \Gamma(\pr^*\!\!\wedge\!T^*M_\bC)$. If $\bm{T}\colon \Gamma(\pr^*\!\!\wedge\!T^*M_\bC|_{\bm{V}})\rightarrow \Gamma(\pr^*\!\!\wedge\!T^*M_\bC|_{\bm{V}})$ is a differential operator defined on some open set $\bm{V} \subset TM$, then the composition
\[ \bm{T}\circ \eta \colon C^\infty(\bm{V}) \rightarrow \Gamma(\pr^*\!\!\wedge\! T^*M_\bC|_{\bm{V}}) \]
is again a differential operator on $\bm{V}$.

\begin{definition}[compare \cite{roe1998elliptic} exercises 12.31, 12.32 and \cite{BerlineGetzlerVergne} pp. 156--157]
For a section $\alpha \in \Gamma(S\boxtimes S^*|_U)\simeq \Gamma(\pr^*\!\!\wedge\!T^*M_\bC|_{\bm{U}})$ and $u \in \bR_{>0}$ define $\delta_u \alpha \in \Gamma(\pr^*\!\!\wedge\!T^*M_\bC|_{u^{-1}\bm{U}})$ by
\begin{equation} 
\label{e:Gres}
\delta_u \alpha(\br{x})=\sum_{k=0}^n u^{-k}\alpha_{[k]}(u\br{x}),
\end{equation}
where $\alpha_{[k]}$ is the component lying in the $k$-th exterior power. The operation $\delta_u$ is the \emph{Getzler re-scaling for the connection} $\nabla$. For an $s$-fibred differential operator $T$ and $m \in \bZ$ define
\begin{equation} 
\label{e:Grescaling}
\sigma^G_m(T)=\lim_{u\rightarrow 0^+} u^m \delta_u\circ \bm{T}\circ\delta_u^{-1}\circ \eta \in \mf{D}(TM)\otimes \wedge T^*M
\end{equation}
when the limit exists, where $\mf{D}(TM) \rightarrow M$ is the bundle of algebras whose fibre over $y \in M$ consists of differential operators on $T_yM$ with polynomial coefficients. In \eqref{e:Grescaling}, $\delta_u \circ \bm{T}\circ \delta_u^{-1}$ is to be viewed as a differential operator on $u^{-1}\cdot \bm{U}\subset TM$, which in the limit produces a differential operator on $TM$. If the limit exists for some $m \in \bZ$ then $T$ is said to have \emph{Getzler order} $m$ (notation: $o^G(T)=m$), and \eqref{e:Grescaling} is the $m$-th order \emph{Getzler symbol} of $T$.  The \emph{constant term} of the Getzler symbol, denoted $\sigma^{G,0}_{m}(T)$ is the element of $\Sym(TM)\otimes \wedge T^*M$ (where $\Sym(TM)$ should be thought of as the bundle of constant coefficient differential operators along the fibres of $TM$) obtained by evaluating the polynomial coefficients of $\sigma^G_m(T)$ along the zero section. Getzler order determines a filtration of the algebra of $s$-fibred differential operators, and the Getzler symbol satisfies (cf. \cite[Proposition 12.22]{roe1998elliptic})
\[ \sigma^G_{m_1+m_2}(T_1T_2)=\sigma^G_{m_1}(T_1)\sigma^G_{m_2}(T_2) \]
with $T_1,T_2$ being $s$-fibred differential operators with Getzler orders $o^G(T_i)=m_i$.
\end{definition}
\begin{remark}
The small deviation from the usual setup was that $\nabla$-parallel translation was used in \eqref{e:synch}, \emph{not} $\nabla^{LC}$-parallel translation.
\end{remark}
To clarify the definition and for use in later calculations, we describe a number of examples.
\begin{example}
\label{ex:Getzlerfunctions}
Let $f \in C^\infty(U)$ be a smooth function that vanishes to order $k$ on the diagonal.  Then $o^G(f)=-k$ and
\[ \sigma^G_{-k}(f)=\lim_{u\rightarrow 0} u^{-k}f(u\br{x}) \]
is a smooth function on $TM$ which is polynomial along the fibres of $TM \rightarrow M$ (it is the homogeneous $k$-th order Taylor polynomial of $f$ in directions normal to the diagonal). In particular a function always possesses a $0$-th order Getzler symbol $\sigma^G_0(f)=\pr^*(f|_{\tn{Diag}_M})$.
\end{example}
\begin{example}
\label{ex:forms}
Let $\alpha=\alpha_1\cdots \alpha_k \in \Gamma(\wedge^k T^*M)$ be a decomposable $k$-form, and let $c(\alpha)$ be the $0$-th order operator on $S$ given by the Clifford action. We view $c(\alpha)$ as a $s$-fibred differential operator as in Example \ref{ex:diffopfibred}. Under the isomorphism $S\otimes S^*\simeq \wedge T^*M_\bC$, Clifford multiplication by $\alpha_j \in \wedge^1T^*M_\bC$ becomes $\epsilon(\alpha_j)-\iota(\alpha_j)$, where $\epsilon$ (resp. $\iota$) denotes exterior multiplication (resp. contraction). It follows that $o^G(c(\alpha))=k$ and
\[ \sigma^G_k(c(\alpha))=1 \otimes \alpha \in \mf{D}(TM)\otimes \wedge T^*M.\]
\end{example}
\begin{example}
Generalizing the previous two examples, any smooth section $Q \in \Gamma(S\boxtimes S^*|_U)$ has Getzler order (at most) $n=\dim(M)$. Its $n$-th order Getzler symbol is the $n$-form part of its restriction to the diagonal.
\end{example}
\begin{example}
The Getzler order $o^G(\E)=0$ and
\[ \sigma^G_0(\E)=\E\otimes 1\]
is the Euler vector field on $TM$.
\end{example}
\subsubsection{Getzler symbol of covariant derivatives}
Let $y \in M$, let $e_1,...,e_n$ be an orthonormal basis of $T_yM$, and let $\br{x}_{y,a}=g(\br{x}_y,e_a)$ be the corresponding normal coordinates on $U_y$. By $\nabla$-parallel translation along radial geodesics, extend $e_1,...,e_n$ to an orthonormal frame of $TM|_{U_y}$. Let $\omega=\sum_a \omega_a \d \br{x}_{y,a} \in \Omega^1(U_y,\mf{o}_n)$ be the connection $1$-form for $\nabla$ on $U_y$ relative to the frame $e_1,...,e_n$. By construction
\begin{equation} 
\label{e:eulercontraction}
\iota(\E)\omega=0,
\end{equation}
and using the Cartan formula, the Lie derivative
\begin{equation} 
\label{e:lieomega}
\L_{\E} \omega=\iota(\E)R
\end{equation}
where $R \in \Omega^2(U_y,\mf{o}_n)$ is the curvature. Since $\omega$ vanishes at $y$ (where $\br{x}_y=0$), equation \eqref{e:lieomega} implies (cf. \cite[Proposition 1.18]{BerlineGetzlerVergne}),
\begin{equation} 
\label{e:omegalinear}
\omega_a(\br{x}_y)=-\frac{1}{2}\sum_b R(\partial_a,\partial_b)_y\br{x}_{y,b}+O(\br{x}_y^2), 
\end{equation}
where $O(\br{x}_y^2)$ denotes an $\mf{o}_n$-valued smooth function vanishing to order $2$ at the point $y$. Via the isomorphism $\mf{o}_n=\mf{so}_n\simeq \mf{spin}_n$, the operator $\nabla_{\partial_a}$ on $S$ is
\[ \nabla_{\partial_a}=\partial_a+\frac{1}{8}\sum_{b,c,d} (R_{cdab})_y\br{x}_{y,b}c(e_c)c(e_d)+O(\br{x}_y^2). \]
Therefore if $X \in \mf{X}(M)$ is a vector field, $X_y=\sum_a X_a \partial_a$, then near $y \in M$,
\[ \nabla_X=X+\frac{1}{8}\sum_{a,b,c,d} (R_{cdab})_yX_a\br{x}_{y,b}c(e_c)c(e_d)+O(\br{x}_y^2). \]
It follows that (compare \cite[Proposition 4.20]{BerlineGetzlerVergne}), at the point $y \in M$,
\begin{equation} 
\label{e:sigmanablay}
\sigma_1^G(\nabla_X)_y=X_y\otimes 1+\frac{1}{8}\sum_{a,b,c,d} (R_{cdab})_yX_a\br{x}_{y,b}\otimes e_ce_d.
\end{equation}
The sum over $a,b,c,d$ in \eqref{e:sigmanablay} is independent of the choice of orthonormal frame $e_1,...,e_n$ at the point $y\in M$, and therefore adopting the abstract index notation convention, we may write
\begin{equation} 
\label{e:sigmanabla}
\sigma_1^G(\nabla_X)=X\otimes 1+\frac{1}{8}\sum_{a,b,c,d} R_{cdab}X_a\br{x}_b\otimes e_ce_d.
\end{equation}
To make the resulting expression more transparent, we introduce the following notation.
\begin{definition}
\label{d:F}
For any $A \in \Gamma(\wedge^2T^*M\otimes \mf{o}(TM))$ define $A^\top \in \Gamma(\wedge^2T^*M\otimes \mf{o}(TM))$ by
\[ g(A^{\top}(X,Y)W,Z)=g(A(W,Z)X,Y).\]
When $B=0$ one has $R^{\top}=R$ by a well-known property of the curvature of the Levi-Civita connection. We also define $R^{\top}_{ab}=g(R^{\top}e_b,e_a) \in \wedge^2T^*M$, the matrix elements of $R^{\top}$ in the orthonormal frame.
\end{definition}
In terms of $R^{\top}$, the Getzler symbol of $\nabla_X$ reads
\begin{equation}
\label{e:GetzlerNabla}
\sigma_1^G(\nabla_X)=X\otimes 1-\frac{1}{4}\sum_{a,b}X_a\br{x}_b\otimes R^{\top}_{ab}.
\end{equation}

\subsubsection{Getzler symbol of $\Delta$, $\Delta-c(\d B)$}
\begin{definition}
Let $D=D^{LC}+c(B)$ where $B \in \Gamma(\wedge^3T^*M)$ is a $3$-form, and recall that we defined $\Delta$ to be the square $D^2$. Let 
\[ \bar{\Delta}=\Delta-c(\d B)= \bar{\Delta}=\nabla^*\nabla+\frac{\kappa}{4}-2|B|^2,\]
where the second expression is Proposition \ref{p:BismutLichnerowicz}.
\end{definition}
If $\d B \ne 0$ then by Example \ref{ex:forms} $o^G(\Delta)=4$ and
\[ \sigma^G_4(\Delta)=1\otimes \d B \in \mf{D}(TM)\otimes \wedge T^*M. \]
More interesting is the operator $\bar{\Delta}$. By Proposition \ref{p:BismutLichnerowicz} and equation \eqref{e:GetzlerNabla}, $o^G(\bar{\Delta})=2$ and
\begin{equation}
\label{e:GetzlerBarDelta}
\sigma^G_2(\bar{\Delta})=-\sum_a \Big(\partial_a\otimes 1-\frac{1}{4}\sum_b \br{x}_b\otimes R^{\top}_{ab}\Big)^2.
\end{equation}

\subsection{Heat kernels}\label{s:heatkernels}
It is well-known (cf. \cite[Theorem 2.30]{BerlineGetzlerVergne}) that the heat operators $e^{-t\Delta}$, $e^{-t\bar{\Delta}}$ have integral kernels $\Theta_t,\bar{\Theta}_t \in C^\infty(M\times M\times (0,\infty),S\boxtimes S^*)$ depending smoothly on $(x,y,t)\in M \times M \times (0,\infty)$, with asymptotic expansions on $U$ as $t \rightarrow 0^+$ (in the space $C^\ell(U,S\boxtimes S^*|_U)$ for any $\ell$),
\begin{align}
\label{e:aexp}
\Theta_t(\br{x})&\sim h_t(\br{x})\sum_{j\ge 0} t^j\Theta_j(\br{x}),\\
\label{e:aexp2}
\bar{\Theta}_t(\br{x})&\sim h_t(\br{x})\sum_{j\ge 0} t^j\bar{\Theta}_j(\br{x}),
\end{align} 
where 
\begin{equation}
\label{e:h}
h_t(\br{x})=(4\pi t)^{-n/2}e^{-\br{x}^2/4t},
\end{equation}
is the Euclidean approximation to the heat kernel. Moreover the expansions remain valid after differentiating both sides with respect to $t$ any number of times. It is convenient to let $\Theta_j=0$ when $j<0$.

The Getzler orders and symbols of the heat kernel coefficients $\bar{\Theta}_j \in C^\infty(U,S\boxtimes S^*|_U)$, $j=0,...,\frac{n}{2}$ can be computed using \eqref{e:GetzlerBarDelta} and Mehler's formula for the solution of the harmonic oscillator. The result is as follows.
\begin{theorem}[\cite{BerlineGetzlerVergne}, Theorem 4.21]
\label{t:Mehler}
For $j=0,...,\frac{n}{2}$ the Getzler order $o^G(\bar{\Theta}_j)=2j$, and the Getzler symbols are given by the generating function
\begin{equation} 
\label{e:Mehler}
\sum_{j=0}^{n/2}t^j\sigma^G_{2j}(\bar{\Theta}_j)=\bigg(1\otimes\tn{det}^{1/2}\Big(\frac{tR^{\top}/2}{\sinh(tR^{\top}/2)}\Big)\bigg)\cdot \tn{exp}\bigg(- \frac{1}{4t}\sum_{a,b} \br{x}_a\br{x}_b\otimes f(R^{\top})_{ab}\bigg)
\end{equation}
where $f(z)=\frac{z}{2}\coth(\frac{z}{2})-1$ and $R^{\top}\in \Gamma(\wedge^2 T^*M \otimes \mf{o}(TM))$ is as in Definition \ref{d:F}.
\end{theorem}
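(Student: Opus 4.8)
The plan is to reduce the computation to the classical Mehler-formula argument of \cite[Chapter 4]{BerlineGetzlerVergne}, the only change being the systematic replacement of $\nabla^{LC}$ by $\nabla$ (hence of $R^{LC}$ by $R^{\top}$) throughout. First I would recall that, by the structure of the asymptotic expansion \eqref{e:aexp2}, the coefficients $\bar{\Theta}_j$ are determined recursively by applying $\bar{\Delta}$ to the ansatz $h_t(\br{x})\sum_j t^j \bar{\Theta}_j(\br{x})$ and collecting powers of $t$. The key observation is that the Getzler rescaling $\delta_u$ interacts with the Euclidean factor $h_t$ and with $\bar{\Delta}$ in a controlled way: rescaling $t \mapsto u^2 t$ simultaneously with $\delta_u$, one checks that $\delta_u \circ (t\,\bar{\Delta}) \circ \delta_u^{-1}$, expressed in the rescaled time variable, converges as $u \to 0^+$ to the \emph{harmonic oscillator} operator
\[ \mathscr{H}=-\sum_a\Big(\partial_a-\tfrac{1}{4}\sum_b \br{x}_b R^{\top}_{ab}\Big)^2, \]
which is exactly $\sigma^G_2(\bar{\Delta})$ from \eqref{e:GetzlerBarDelta} (up to the factor of $t$). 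This is where the choice of $\nabla$-parallel translation in \eqref{e:synch} pays off: because $\bar{\Delta}=\nabla^*\nabla+\tfrac{\kappa}{4}-2|B|^2$ has no $c(\d B)$ term, the rescaled operator has a clean Getzler order $2$ symbol with no surviving zeroth-order Clifford contributions beyond those already visible in \eqref{e:GetzlerBarDelta}; the scalar terms $\tfrac{\kappa}{4}-2|B|^2$ have Getzler order $0$ and contribute a term of order $u^{-2}\cdot u^2=1$ relative to the leading $u^{-2}$, hence drop out in the limit.

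Next I would invoke the uniqueness of the formal heat-kernel expansion: the rescaled kernels $\delta_u(h_t^{-1}\bar{\Theta}_t)$, viewed as formal power series in the rescaled time, satisfy in the limit the heat equation for $\mathscr{H}$ with the Euclidean delta-function initial condition, so by Mehler's formula (\cite[Theorem 4.13]{BerlineGetzlerVergne}) the limit is
\[ q_t^{R^{\top}}(\br{x})= \tn{det}^{1/2}\!\Big(\tfrac{tR^{\top}/2}{\sinh(tR^{\top}/2)}\Big)\exp\!\Big(-\tfrac{1}{4t}\textstyle\sum_{a,b}\br{x}_a\br{x}_b\, f(R^{\top})_{ab}\Big), \qquad f(z)=\tfrac z2\coth\tfrac z2-1, \]
where one must be slightly careful that $R^{\top}$, unlike $R^{LC}$, need not be symmetric as a bilinear form in the last two indices — but Mehler's formula only uses the antisymmetry of $R^{\top}_{ab}$ as an $\mf{o}(TM)$-valued $2$-form in $(a,b)$, i.e.\ $R^{\top}_{ab}=-R^{\top}_{ba}$, which still holds since $R^{\top}$ is by construction a section of $\wedge^2 T^*M\otimes\mf{o}(TM)$ (its symbol $\mathscr H$ makes sense and is a genuine harmonic oscillator precisely because of this). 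Matching powers of $t$ on the two sides then simultaneously shows $o^G(\bar{\Theta}_j)=2j$ (the coefficient of $t^j$ in $q_t^{R^{\top}}$ is a sum of a form-degree-$\le 2j$ term times a polynomial in $\br{x}$ of the right weight, so the $u$-rescaling limit defining $\sigma^G_{2j}$ exists and is finite) and gives the generating-function identity \eqref{e:Mehler}. I would also remark that the factor $\varrho^r$ of Example \ref{ex:varrho} (the Jacobian of $\exp_y$) is $1$ to leading order on the diagonal and contributes only to higher Getzler order, hence does not affect the symbol.

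The main obstacle, and the only place requiring genuine care rather than bookkeeping, is the interchange of limits: one must justify that the Getzler-rescaled kernels converge to the solution of the \emph{model} heat equation, i.e.\ that the $u\to 0$ limit commutes with solving the heat equation and with extracting the $t$-asymptotic expansion. In \cite{BerlineGetzlerVergne} this is handled by rescaling the entire heat kernel (not just its expansion) and using a priori Gaussian estimates \eqref{e:aexp2} together with Duhamel's principle to show the remainder after finitely many terms is negligible; the same argument applies verbatim here because the only input is that $\bar{\Delta}$ is a generalized Laplacian with the stated Getzler order $2$ symbol, which Proposition \ref{p:BismutLichnerowicz} and \eqref{e:GetzlerBarDelta} provide. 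An alternative, purely formal route — sufficient for our purposes since we only need the symbols $\sigma^G_{2j}(\bar{\Theta}_j)$ and not convergence of the rescaled kernels — is to work entirely with the formal recursion for the $\bar{\Theta}_j$, apply $\delta_u$ termwise, and verify directly that the leading-order recursion is the Mehler recursion; this sidesteps all analytic subtleties at the cost of a slightly longer algebraic computation. Either way, once the model problem is identified the result is immediate from Mehler's formula.
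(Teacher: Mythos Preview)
The paper does not give a detailed proof of this theorem: it is stated as a citation of \cite[Theorem~4.21]{BerlineGetzlerVergne}, preceded only by the one-sentence remark that the Getzler symbols ``can be computed using \eqref{e:GetzlerBarDelta} and Mehler's formula for the solution of the harmonic oscillator.'' Your proposal is a correct and careful expansion of exactly this remark---the reduction to the model harmonic oscillator via Getzler rescaling, the observation that removing $c(\d B)$ is what makes $\bar{\Delta}$ have Getzler order $2$, and the appeal to Mehler's formula---so it is precisely in line with what the paper intends, just spelled out in more detail than the paper bothers to give.
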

The constant part of the Getzler symbol is obtained by setting $\br{x}=0$ in \eqref{e:Mehler}, resulting in the differential form:
\begin{equation} 
\label{e:fakeAhat}
\sum_{j=0}^{n/2}t^j\sigma^{G,0}_{2j}(\bar{\Theta}_j)=\tn{det}^{1/2}\Big(\frac{tR^{\top}/2}{\sinh(tR^{\top}/2)}\Big).
\end{equation}
As mentioned in Definition \ref{d:F}, when $B=0$, $R^{\top}=R$ is the Riemann curvature tensor of the metric $g$, hence upon setting $t=1$ the form \eqref{e:fakeAhat} becomes the usual Chern-Weil representative of the $\Ahat$-class. In general we have the following formula for $R^\top$, which slightly generalizes \cite[Theorem 1.6]{bismut1989local}.
\begin{proposition}
\label{p:tildeR}
Let $B$ be a $3$-form. Let $\nabla=\nabla^{LC}+B_\mf{o}$ and $\nabla_-=\nabla^{LC}-B_{\mf{o}}$ with curvature tensors $R$, $R_-$ respectively. Then $R^\top=R_-+(\d B)_{\mf{o}}$. In particular when $\d B=0$, $R^\top=R_-$, and \eqref{e:fakeAhat} is the Chern-Weil representative of the $\Ahat$-class (up to factors of $2\pi \i$) constructed using the connection $\nabla_-$.
\end{proposition}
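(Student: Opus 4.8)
The plan is to compute $R^\top$ directly from its defining identity $g(R^\top(X,Y)W,Z)=g(R(W,Z)X,Y)$, using the decomposition $\nabla=\nabla^{LC}+B_\mf{o}$ together with the standard structure equation for the curvature of a connection that differs from the Levi-Civita connection by a $1$-form valued in $\mf{o}(TM)$. First I would write $R=R^{LC}+\d^{\nabla^{LC}}B_\mf{o}+B_\mf{o}\wedge B_\mf{o}$, where $\d^{\nabla^{LC}}$ is the exterior covariant derivative and $B_\mf{o}\wedge B_\mf{o}$ denotes the wedge-with-bracket term; the analogous formula for $R_-$ has $B_\mf{o}$ replaced by $-B_\mf{o}$, so $R+R_-=2R^{LC}+2B_\mf{o}\wedge B_\mf{o}$ and $R-R_-=2\,\d^{\nabla^{LC}}B_\mf{o}$. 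The claim $R^\top=R_-+(\d B)_\mf{o}$ is then equivalent, after taking the $\top$ operation, to an identity relating the "transpose" of each of the three pieces of $R$ to the corresponding piece of $R_-$ plus $(\d B)_\mf{o}$.

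The key algebraic inputs are: (i) the Riemann curvature $R^{LC}$ satisfies $(R^{LC})^\top=R^{LC}$, which is the classical pair symmetry $g(R^{LC}(X,Y)W,Z)=g(R^{LC}(W,Z)X,Y)$ recalled in Definition \ref{d:F}; (ii) the quadratic term $B_\mf{o}\wedge B_\mf{o}$ is symmetric under $\top$ up to sign in a way that, combined with the definition of $A_\mf{o}$ via $g(A_\mf{o}(X_1,\dots,X_{k-2})X_{k-1},X_k)=2A(X_1,\dots,X_k)$, reorganizes into the contribution already present in $R_-$; and (iii) most importantly, $(\d^{\nabla^{LC}}B_\mf{o})^\top$ must be identified. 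For the last point I would use that $\d B$ is an honest $4$-form, that $B_\mf{o}$ is built from $B$ by the metric isomorphism, and that the exterior covariant derivative of $B_\mf{o}$ agrees with $(\d B)_\mf{o}$ up to terms involving the torsion-free condition on $\nabla^{LC}$; concretely, in a local orthonormal frame one expands all four tensors in components, writes $B_{\mf{o},ij}=2B_{\cdot\cdot ij}$ (abstract indices), and checks the component identity $R^\top_{ij,kl}=R_{-,ij,kl}+(\d B)_\mf{o}{}_{,ij,kl}$ by direct index manipulation, using the first Bianchi identity for $R^{LC}$ and the antisymmetrization that produces $\d B$.

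I expect the main obstacle to be bookkeeping the antisymmetrizations correctly: the $\top$ operation swaps the "form indices" $(X,Y)$ with the "$\mf{o}$-matrix indices" $(W,Z)$, while $\d B$ is the full antisymmetrization over all four slots and $(\d B)_\mf{o}$ reassigns two of those slots to matrix indices, so the combinatorial factors ($2$'s from the definition of $A_\mf{o}$, signs from reordering) must be tracked with care. A clean way to organize this, and the route I would actually take in the writeup, is to first reduce to the pointwise statement at a point $y$ where one may choose normal coordinates for $\nabla^{LC}$ so that the connection coefficients vanish at $y$; then $\d^{\nabla^{LC}}B_\mf{o}$ at $y$ is just $\d$ of the component functions, $R^{LC}$ at $y$ is the usual coordinate curvature, and the identity becomes a purely algebraic statement in the curvature components and first derivatives of $B$ that can be verified by a short antisymmetrization argument. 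Once $R^\top=R_-+(\d B)_\mf{o}$ is established, the final sentence of the proposition is immediate: setting $\d B=0$ gives $R^\top=R_-$, and substituting into \eqref{e:fakeAhat} and putting $t=1$ yields exactly the Chern--Weil $\Ahat$-form of $\nabla_-$ (with the stated $2\pi\i$ normalization), since $\Ahat$ is the invariant polynomial $\tn{det}^{1/2}\big((X/2)/\sinh(X/2)\big)$ evaluated on the curvature.
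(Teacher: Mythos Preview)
Your plan is correct and follows essentially the same route as the paper: decompose $R=R^{LC}+\d_{\nabla^{LC}}B_\mf{o}+B_\mf{o}^2$, use the pair symmetry $(R^{LC})^\top=R^{LC}$, show the quadratic term satisfies $(B_\mf{o}^2)^\top=B_\mf{o}^2$, and identify the relation between $(\d_{\nabla^{LC}}B_\mf{o})^\top$ and $(\d B)_\mf{o}$. One small imprecision: your remark that ``$\d^{\nabla^{LC}}B_\mf{o}$ agrees with $(\d B)_\mf{o}$ up to terms involving the torsion-free condition'' is not quite the right formulation---the actual identity, which drops out immediately from the coordinate formula for $\d B$ in terms of $\nabla^{LC}$, is $(\d B)_\mf{o}=\d_{\nabla^{LC}}B_\mf{o}+(\d_{\nabla^{LC}}B_\mf{o})^\top$, and this is exactly what makes the minus sign in $R_-$ appear; your proposed brute-force component check in normal coordinates would of course recover this.
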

\begin{proof}
Let $\d_{\nabla^{LC}}$ be the exterior covariant differential defined by the Levi-Civita connection. Then
\[ R=(\d_{\nabla^{LC}}+B_{\mf{o}})^2=R^{LC}+\d_{\nabla^{LC}}B_{\mf{o}}+B_{\mf{o}}^2\]
and thus
\begin{equation} 
\label{e:Rtop}
R^\top=R^{LC}+(\d_{\nabla^{LC}}B_{\mf{o}})^\top+(B_{\mf{o}}^2)^\top.
\end{equation}
Let $e_1,...,e_n$ be a local orthonormal frame. Then
\begin{align*} 
g((B_{\mf{o}}^2)^\top(W,X)Y,Z)&=g([B_{\mf{o}}(Y),B_{\mf{o}}(Z)]W,X)\\
&=4\sum_i B(Y,e_i,X)B(Z,W,e_i)-B(Z,e_i,X)B(Y,W,e_i), 
\end{align*}
and if one expands $g(B_{\mf{o}}^2(W,X)Y,Z)$ in the same way one finds the same expression (after using the antisymmetry of $B$ to permute the entries). Therefore $(B_{\mf{o}}^2)^\top=B_{\mf{o}}^2$. On the other hand
\[ \d B(W,X,Y,Z)=(\nabla^{LC}_WB)(X,Y,Z)-(\nabla^{LC}_XB)(W,Y,Z)+(\nabla^{LC}_YB)(W,X,Z)-(\nabla^{LC}_ZB)(W,X,Y), \]
and since $(\nabla^{LC}_X B)_{\mf{o}}=\nabla^{LC}_X B_{\mf{o}}$, we find (grouping the four terms in the expression for $\d B$ in two groups of two):
\[ (\d B)_{\mf{o}}=\d_{\nabla^{LC}}B_{\mf{o}}+(\d_{\nabla^{LC}}B_{\mf{o}})^\top.\]
Thus equation \eqref{e:Rtop} becomes
\[ R^\top=R^{LC}-\d_{\nabla^{LC}}B_{\mf{o}}+B_{\mf{o}}^2+(\d B)_{\mf{o}}=R_-+(\d B)_{\mf{o}}.\]
\end{proof}

\section{The residue cocycle for $D=D^{LC}+c(B)$}
In this section we study the residue cocycle for the spectral triple $(C^\infty(M),L^2(M,S),D)$, where the operator $D=D^{LC}+c(B)$, $B \in \Gamma(\wedge^3T^*M)$. We describe some constraints on what contributions can occur in general, and show how to calculate the cocycle completely when $\d B=0$ using the Getzler calculus. We will use notation introduced in the previous section. In particular recall $\Delta=D^2$, $\bar{\Delta}=\Delta-c(\d B)$, as well as the heat kernels $\Theta_t$, $\bar{\Theta}_t$, the asymptotic expansion coefficients $\Theta_j$, $\bar{\Theta}_j$, and the Euclidean approximation to the heat kernel $h_t(\br{x})=(4\pi t)^{-n/2}e^{-\br{x}^2/4t}$.

According to Proposition \ref{p:heatkernelversion} (and using $[D,a]=c(\d a)$ for $a \in C^\infty(M)$), for $0\le p \le n$ the component $\varphi_p(a_0,...,a_p)$, $a_0,...,a_p\in C^\infty(M)$, of the residue cocycle can be determined from the asymptotic expansion as $t \rightarrow 0^+$ of
\begin{equation} 
\label{e:TrP}
\Tr_s(P_k(a_0,...,a_p)e^{-t\Delta}),
\end{equation}
where $k=(k_1,...,k_p) \in (\bZ_{\ge 0})^p$, $|k|\le n-p$ and
\begin{equation} 
\label{e:Pk}
P_k=P_k(a_0,...,a_p)=a_0\big(\ad_\Delta^{k_1}c(\d a_1)\big)\cdots \big(\ad_\Delta^{k_p}c(\d a_p)\big).
\end{equation}
As $p$ and $a_0,...,a_p$ will be fixed, to simplify notation we will write $P_k$ instead of $P_k(a_0,...,a_p)$ below. It is also convenient to introduce the operator 
\begin{equation} 
\label{e:Pk}
\bar{P}_k=a_0\big(\ad_{\bar{\Delta}}^{k_1}c(\d a_1)\big)\cdots \big(\ad_{\bar{\Delta}}^{k_p}c(\d a_p)\big),
\end{equation}
obtained by replacing $\Delta$ with $\bar{\Delta}=\Delta-c(\d B)$.

The integral kernel of $P_k e^{-t\Delta}$ is $P_k\Theta_t$, where the notation means that $P_k$, $\Theta_t$ are composed as $s$-fibred differential operators. (See Example \ref{ex:diffopfibred} for the sense in which $P_k$ is an $s$-fibred differential operator; another reasonable, though cumbersome, notation would be $P_k(x)\Theta_t(x,y)$ to emphasize that $P_k$ acts along the first factor in the product $M \times M$.) The supertrace \eqref{e:TrP} is given by integration of the pointwise supertrace of the kernel along the diagonal: 
\begin{equation}
\label{e:TrPIntegral}
\Tr_s(P_k e^{-t\Delta})=\int_M \tr_s\big(P_k\Theta_t|_{\br{x}=0}\big) \d V,
\end{equation}
where $\d V$ is the Riemannian measure. Thus computing the asymptotic expansion of \eqref{e:TrP} amounts to studying the low-lying terms in the asymptotic expansion in $t$ of the integrand \eqref{e:TrPIntegral}, and in particular it is enough to work in an arbitrarily small neighborhood of the diagonal in $M \times M$. Using the asymptotic expansion \eqref{e:aexp} of $\Theta_t$,
\begin{equation} 
\label{e:PIexp}
\tr_s(P_k\Theta_t|_{\br{x}=0})\sim (4\pi t)^{-n/2}\sum_{j\ge 0} t^j \tr_s(h_t^{-1}P_kh_t\Theta_j|_{\br{x}=0}), 
\end{equation}
and likewise for $\tr_s(\bar{P}_k\bar{\Theta}_t|_{\br{x}=0})$.

\begin{lemma}
\label{l:hPh}
Let $\O$ be an $s$-fibred differential operator of order $m$. Then
\begin{equation} 
\label{e:expO}
h_t^{-1}\O h_t=\sum_{\ell=0}^m t^{-m+\ell}\O_\ell 
\end{equation}
where $\O_\ell$ is an $s$-fibred differential operator of order $\ell$. For $\ell<\lceil m/2\rceil$, $\O_\ell$ vanishes on the diagonal (in the sense of Definition \ref{d:$r$-fibreddo}).
\end{lemma}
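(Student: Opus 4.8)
The plan is to reduce the statement to a computation with the Gaussian factor $h_t$ in Riemannian normal coordinates, and then to control all three relevant quantities — the power of $t$, the differential order, and the order of vanishing on the diagonal — by means of a single weight that is preserved under the relevant conjugation. This is essentially the computation underlying the usual Getzler calculus (cf. \cite{BerlineGetzlerVergne}), carried out here for the connection $\nabla$.

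First I would localize. The conclusion is fibrewise in $y$ and only concerns an arbitrarily small neighbourhood of the diagonal, so I fix $y\in M$, work on $U_y$ in the normal coordinates $\br{x}_{y,a}=g(\br{x}_y,e_a)$ of \S\ref{s:Getzler}, and use \eqref{e:synch} to write $\O_y=\sum_{|\alpha|\le m}c_\alpha(\br{x}_y)\partial^\alpha$ with $\partial_a=\partial/\partial\br{x}_{y,a}$ and smooth $\End(\wedge T^*_yM_\bC)$-valued coefficients $c_\alpha$. Since $\br{x}^2=\sum_a\br{x}_{y,a}^2$ by Parseval, the scalar $(4\pi t)^{-n/2}$ cancels in the conjugation, so $h_t^{-1}\O_yh_t=e^{\br{x}^2/4t}\O_ye^{-\br{x}^2/4t}$; combining the identity $e^{\br{x}^2/4t}\partial_ae^{-\br{x}^2/4t}=\partial_a-\br{x}_a/(2t)$ with the fact that conjugation is an algebra automorphism fixing functions of $\br{x}$ gives
\[ h_t^{-1}\O_yh_t=\sum_{|\alpha|\le m}c_\alpha(\br{x})\prod_a\Big(\partial_a-\frac{\br{x}_a}{2t}\Big)^{\alpha_a}. \]
Only powers $t^{-c}$ with $0\le c\le|\alpha|\le m$ occur, and collecting the coefficient of $t^{-(m-\ell)}$ defines $\O_\ell$ and yields the expansion \eqref{e:expO}.

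Next I would bring in the bookkeeping. Assign weight $w(\br{x}_a)=1$, $w(\partial_a)=-1$, $w(t^{-1})=-2$; since the only nontrivial relation $[\partial_a,\br{x}_b]=\delta_{ab}$ has weight $0$, the algebra generated by the $\br{x}_a$, the $\partial_a$ and $t^{-1}$ is $\bZ$-graded by $w$, and each $\partial_a-\br{x}_a/(2t)$ is $w$-homogeneous of weight $-1$. Hence $\prod_a(\partial_a-\br{x}_a/(2t))^{\alpha_a}$ is $w$-homogeneous of weight $-|\alpha|$, so every monomial $\br{x}^\beta t^{-c}\partial^\gamma$ appearing in it satisfies $|\beta|-2c-|\gamma|=-|\alpha|$. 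Moreover such a monomial with $t$-power $t^{-c}$ arises by picking the summand $-\br{x}_a/(2t)$ from exactly $c$ of the $|\alpha|$ factors and $\partial_a$ from the remaining $|\alpha|-c$, and normal-ordering via $[\partial_a,\br{x}_a]=1$ only decreases the number of $\partial$-factors, so $|\gamma|\le|\alpha|-c$ (and then $|\beta|=2c+|\gamma|-|\alpha|\le c$).

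Finally I would read off the consequences at $c=m-\ell$: the order is $|\gamma|\le|\alpha|-c\le m-c=\ell$, and multiplication by $c_\alpha(\br{x})$ does not raise it, so $\O_\ell$ has order $\le\ell$; meanwhile $|\beta|=2(m-\ell)+|\gamma|-|\alpha|\ge m-2\ell$ since $|\gamma|\ge0$ and $|\alpha|\le m$, so each coefficient of $\O_\ell$ — being $c_\alpha(\br{x})$ times a polynomial vanishing to order $\ge m-2\ell$ at the origin — vanishes to order $\ge m-2\ell$ on the diagonal. Since a differential operator vanishes on the diagonal in the sense of Definition \ref{d:$r$-fibreddo} exactly when all its coefficients do, and since $m-2\ell\ge1$ precisely when $\ell<\lceil m/2\rceil$, this gives the last assertion; all estimates are uniform in $y$, so $\O_\ell$ is a genuine $s$-fibred differential operator. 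I expect the only delicate point to be this diagonal-vanishing claim, and the weight/homogeneity bookkeeping is the natural way to get it; the one geometric subtlety is the use of normal coordinates, which is what makes $\br{x}^2=\sum_a\br{x}_{y,a}^2$, hence the conjugation identity, hold exactly.
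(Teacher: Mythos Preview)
Your proof is correct. The explicit conjugation identity, the weight bookkeeping, and the derivation of the bound $|\beta|\ge m-2\ell$ are all sound, and your observation that $m-2\ell\ge 1$ is equivalent to $\ell<\lceil m/2\rceil$ is exactly what is needed.

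The paper's argument is organized differently. For the order claim it says the same thing you do (applying $m-\ell$ derivatives to $h_t$ leaves $\ell$), but for the vanishing on the diagonal it argues by comparing two rescalings: the pure $t$-rescaling $t\mapsto ut$, under which the lowest surviving power of $t$ on the diagonal is by definition $-m+p$ where $p$ is the first index with $\O_p$ not vanishing there, versus the parabolic rescaling $(t,\br{x})\mapsto (ut,u^{1/2}\br{x})$, which fixes $e^{-\br{x}^2/4t}$ and hence bounds the growth of $h_t^{-1}\O h_t\,\Psi$ by $u^{-m/2}$; comparing the two on the diagonal gives $-m+p\ge -m/2$, i.e.\ $p\ge\lceil m/2\rceil$. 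This is really the same parabolic homogeneity you are exploiting, since your weight $w(\br{x}_a)=1$, $w(\partial_a)=-1$, $w(t^{-1})=-2$ is exactly (twice) the grading induced by that rescaling. Your route is more elementary and yields a bit more, namely the explicit vanishing order $\ge m-2\ell$ of the coefficients; the paper's route is slicker and avoids writing out the expansion, at the cost of being less transparent about where the bound comes from.
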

\begin{proof}
Since $h_t(\br{x})=(4\pi t)^{-n/2}e^{-\br{x}^2/4t}$, the coefficient of $t^{-m+\ell}$ comes from applying $m-\ell$ derivatives to $h_t$, leaving $m-(m-\ell)=\ell$ derivatives. This proves the claim regarding the order of $\O_\ell$.

Let $\Psi \in \Gamma(S\boxtimes S^*|_U)$ and set $\O(t)=h_t^{-1}\O h_t$. Fix $t$ and consider the re-scaling $\phi_{0,u}\colon t \mapsto ut$. Along the diagonal, the re-scaled section $\phi_{0,u}^* \O(t)\Psi|_{\br{x}=0}$ has an asymptotic expansion in $u$ as $u \rightarrow 0^+$, with some lowest power $u^{-m+p_\Psi}$ where $p_\Psi \ge 0$. Taking the infimum of $-m+p_\Psi$ over all choices of $\Psi$ yields the lowest power $-m+p \in \bZ$ of $t$ in \eqref{e:expO} such that the $s$-fibred differential operator $\O_p$ does not vanish on the diagonal.

On the other hand we may obtain a lower bound on $-m+p$ by considering the growth rate of the section $\O(t)\Psi \in \Gamma(S\boxtimes S^*|_U)$ (no restriction to the diagonal $\br{x}=0$) under the combined re-scaling $\phi_u \colon (t,\br{x}) \mapsto (ut,u^{1/2}\br{x})$. Clearly $\phi_u^*e^{-\br{x}^2/4t}=e^{-\br{x}^2/4t}$, and as $\O$ has order $m$, the section $\O(t)\Psi$ grows at most at the rate $u^{-m/2}$ under the re-scaling. Hence $-m+p\ge -m/2$ and so $p \ge m/2$. Since $p$ is an integer, $p \ge \lceil m/2\rceil$.
\end{proof}

In particular the above Lemma applies to the order $|k|$ differential operators $P_k$, $\bar{P}_k$. For the operator $\O=\bar{P}_k$ we will also need the Getzler orders of the operators $\bar{P}_{k,\ell}=\O_\ell$.

\begin{proposition}
\label{p:Deltah}
In normal coordinates
\[ h^{-1}_t\Delta h_t=\Delta+\frac{1}{t}\nabla_\E+\frac{n}{2t}+\frac{1}{t}\E(\log \varrho)-\frac{\br{x}^2}{4t^2} \]
where $\varrho=|g|^{1/4}$ (see Example \ref{ex:varrho}). The same formula holds with $\Delta$ replaced by $\bar{\Delta}$.
\end{proposition}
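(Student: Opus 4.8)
The plan is to reduce the identity to a conjugation formula for the Bochner Laplacian $\nabla^*\nabla$ and then evaluate the resulting scalar correction terms in normal coordinates. By Proposition~\ref{p:BismutLichnerowicz} both $\bar{\Delta}$ and $\Delta=\bar{\Delta}+c(\d B)$ differ from $\nabla^*\nabla$ by a smooth bundle endomorphism of $S$, and any bundle endomorphism, being a $0$-th order operator, commutes with multiplication by the scalar function $h_t$. Hence $h_t^{-1}\Delta h_t-\Delta=h_t^{-1}\bar{\Delta}h_t-\bar{\Delta}=h_t^{-1}\nabla^*\nabla h_t-\nabla^*\nabla$, so it suffices to prove
\[ h_t^{-1}\nabla^*\nabla h_t=\nabla^*\nabla+\tfrac1t\nabla_{\E}+\tfrac{n}{2t}+\tfrac1t\E(\log\varrho)-\tfrac{\br{x}^2}{4t^2}. \]
Since the prefactor $(4\pi t)^{-n/2}$ in $h_t$ is central, conjugating by $h_t$ is the same as conjugating by $e^{\phi}$ with $\phi=-\br{x}^2/4t$.

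Next I would record the elementary conjugation formula for a Laplace-type operator with scalar principal symbol: for any smooth function $\phi$,
\[ e^{-\phi}\nabla^*\nabla e^{\phi}=\nabla^*\nabla-2\nabla_{\tn{grad}\,\phi}+(\Delta_{LB}\phi)-|\d\phi|^2, \]
where $\tn{grad}$ and $|\cdot|$ are taken with respect to $g$ and $\Delta_{LB}=\nabla^{LC,*}\nabla^{LC}$ is the Laplace--Beltrami operator on functions. This follows by writing $\nabla^*=-\sum_i\iota_{e_i}\circ\nabla_{e_i}$ in a local orthonormal frame (with $\nabla^{LC}$ used on the $T^*M$ factor) and expanding $\nabla^*\nabla(e^{\phi}\psi)$ by the Leibniz rule: the term $-2\nabla_{\tn{grad}\,\phi}$ collects the cross terms, the identity $\nabla^*(\d\phi\otimes\psi)=(\Delta_{LB}\phi)\psi-\nabla_{\tn{grad}\,\phi}\psi$ produces the $\Delta_{LB}\phi$ contribution, and $\iota_{\tn{grad}\,\phi}(\d\phi\otimes\psi)=|\d\phi|^2\psi$ produces the last. (Alternatively one may conjugate directly the explicit orthonormal-frame expression for $\nabla^*\nabla$ recorded after Proposition~\ref{p:BismutLichnerowicz}.)

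It then remains to evaluate the three correction terms for $\phi=-\br{x}^2/4t$ in normal coordinates $\br{x}_1,\dots,\br{x}_n$ centered at the base point, in which $\br{x}^2=\sum_a\br{x}_a^2$ and $\E=\sum_a\br{x}_a\partial_a$ (Example~\ref{ex:Euler}). The Gauss lemma, in the form $\sum_b g^{ab}\br{x}_b=\br{x}_a$, gives $\tn{grad}(\br{x}^2)=2\E$ and $|\d(\br{x}^2)|^2=\sum_{a,b}g^{ab}(2\br{x}_a)(2\br{x}_b)=4\br{x}^2$, whence $-2\nabla_{\tn{grad}\,\phi}=\tfrac1t\nabla_{\E}$ and $-|\d\phi|^2=-\tfrac{\br{x}^2}{4t^2}$. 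For the remaining term, using $\Delta_{LB}f=-|g|^{-1/2}\sum_a\partial_a\big(|g|^{1/2}\sum_b g^{ab}\partial_b f\big)$ with $f=\br{x}^2$ and $\sum_b g^{ab}\partial_b(\br{x}^2)=2\br{x}_a$ yields
\[ \Delta_{LB}(\br{x}^2)=-|g|^{-1/2}\sum_a\partial_a\!\big(2|g|^{1/2}\br{x}_a\big)=-2n-2\,\E\big(\log|g|^{1/2}\big)=-2n-4\,\E(\log\varrho), \]
using $\sum_a\partial_a\br{x}_a=n$ and $\varrho=|g|^{1/4}$; hence $\Delta_{LB}\phi=-\tfrac1{4t}\Delta_{LB}(\br{x}^2)=\tfrac{n}{2t}+\tfrac1t\E(\log\varrho)$. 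Substituting the three terms into the conjugation formula gives the stated identity for $\nabla^*\nabla$, and the cases of $\Delta$ and $\bar{\Delta}$ follow from the first paragraph.

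The genuinely delicate points are the sign bookkeeping in the conjugation identity and the normal-coordinate computation of $\Delta_{LB}(\br{x}^2)$ — which is where the density $\varrho=|g|^{1/4}$ makes its appearance — while everything else is a routine application of the Gauss lemma; one should also keep in mind that $\nabla$ and $\nabla^{LC}$ share the same geodesics, so that normal coordinates and the Euler field $\E$ are unambiguously defined and the above computation applies verbatim to the $B$-twisted connection.
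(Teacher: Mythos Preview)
Your proof is correct and follows essentially the same approach as the paper: both reduce to the Bochner Laplacian (the potential terms commuting with $h_t$), use the Leibniz-type identity $[\nabla^*\nabla,e^\phi]=e^\phi\big(-2\nabla_{\tn{grad}\,\phi}+(\Delta_{LB}\phi)-|\d\phi|^2\big)$, and then evaluate the three correction terms via the Gauss lemma and the normal-coordinate expression for $\Delta_{LB}(\br{x}^2)$. Your write-up is somewhat more explicit in isolating the general conjugation formula and in tracking the appearance of $\varrho$, but the underlying computation is the same.
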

\begin{proof}
One has
\[ h^{-1}_t\Delta h_t=\Delta+h_t^{-1}[\Delta,h_t]=\Delta-2h^{-1}_t\nabla_{\nabla h_t}+h^{-1}_t\Delta h_t.\]
On the other hand
\[ h^{-1}_t\nabla h_t=-\frac{1}{2t}\E \]
and a short calculation in normal coordinates (cf. \cite[p.100]{roe1998elliptic}) shows that
\[ h_t^{-1}(\Delta h_t)=-\frac{\br{x}^2}{4t}+\frac{n}{2t}+\frac{1}{t}\E(\log \varrho).\]
\end{proof}

\begin{lemma}
\label{l:iteratedcommutator}
Let $m>0$ and $f \in C^\infty(M)$. On $U \subset M \times M$ we have 
\[ \ad^m_{h^{-1}_t\bar{\Delta} h_t}(c(\d f))=t^{-m}\O_0+t^{-m+1}\O_1+\cdots+t^0\O_m. \]
where $\O_\ell$ is an $s$-fibred differential operator with Getzler order $o^G(\O_\ell)\le 2\ell$.
\end{lemma}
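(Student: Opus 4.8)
The plan is an induction on $m$, proving a slightly stronger statement so that the recursion closes: I will show
\[ \ad^m_{h^{-1}_t\bar{\Delta} h_t}(c(\d f))=\sum_{\ell=0}^m t^{-m+\ell}\O_\ell, \]
where each $\O_\ell$ is an $s$-fibred differential operator of \emph{ordinary} differential order at most $\ell$ \emph{and} of Getzler order $o^G(\O_\ell)\le 2\ell$. In particular $\O_0$ is always an order-zero operator, i.e.\ multiplication by a section of $\End(S)$, and hence commutes with multiplication by any scalar function; this is exactly what keeps the powers of $t$ in the stated range $[-m,0]$.

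First I would rewrite Proposition \ref{p:Deltah} as $h^{-1}_t\bar{\Delta} h_t=\L_0+t^{-1}\L_1+t^{-2}\L_2$ with $\L_0=\bar{\Delta}$, $\L_1=\nabla_\E+\tfrac{n}{2}+\E(\log\varrho)$, $\L_2=-\tfrac14\br{x}^2$, and record two bookkeeping facts: each $\L_i$ has ordinary differential order $2-i$ with \emph{scalar} principal symbol, and $o^G(\L_i)\le 2-2i$. For the Getzler bounds: $o^G(\bar{\Delta})=2$ was established in \S\ref{s:Getzler} (see \eqref{e:GetzlerBarDelta}); $o^G(\nabla_\E)=0$, the constant $\tfrac{n}{2}$ has Getzler order $0$, and $\E(\log\varrho)$ vanishes at the base point of each normal chart (since $\E$ does) hence has Getzler order $\le -1$ by Example \ref{ex:Getzlerfunctions}, so $o^G(\L_1)\le 0$; and $o^G(\br{x}^2)=-2$, again by Example \ref{ex:Getzlerfunctions}. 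The decisive structural point is that $\L_2$ is multiplication by a scalar.

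For the base case $m=1$, commuting $h^{-1}_t\bar{\Delta} h_t$ with $c(\d f)$ kills the scalar summands $\tfrac{n}{2}+\E(\log\varrho)$ and $-\tfrac14\br{x}^2$, leaving $[\bar{\Delta},c(\d f)]+t^{-1}[\nabla_\E,c(\d f)]$. By the lift property $[\nabla_X,c(Y)]=c(\nabla_X Y)$ the last bracket equals $c(\nabla_\E\,\d f)$, which has order $0$ and, being Clifford multiplication by a $1$-form vanishing on the diagonal, Getzler order $\le 1-1=0$; this is $\O_0$. For $\O_1:=[\bar{\Delta},c(\d f)]$ the only subtlety is that $o^G(\O_1)\le 2$, one less than the a priori estimate $o^G(\bar{\Delta})+o^G(c(\d f))\le 3$: the degree-$3$ Getzler symbol of the commutator is the commutator in $\mf{D}(TM)\otimes\wedge T^*M$ of $\sigma^G_2(\bar{\Delta})$, whose $\wedge T^*M$-component has \emph{even} degree by \eqref{e:GetzlerBarDelta} (it involves only $1$ and the $R^\top_{ab}$), with $\sigma^G_1(c(\d f))=1\otimes\d f$ of $\wedge$-degree $1$; even forms are central in $\wedge T^*M$ and the $\mf{D}(TM)$-parts commute, so this symbol vanishes.

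For the inductive step I would apply $[h^{-1}_t\bar{\Delta} h_t,-]$ to $\sum_\ell t^{-m+\ell}\O_\ell$ and regroup by powers of $t$, so that the coefficient of $t^{-(m+1)+\ell'}$ becomes $\sum_{1+\ell-i=\ell'}[\L_i,\O_\ell]$. Since each $\L_i$ has scalar principal symbol, each such commutator has ordinary order $\le(2-i)+\ell-1=\ell'$ (and for $\ell=0$ one uses in addition that $\O_0$ has order $0$, so that $[\L_2,\O_0]=0$ outright), while Getzler order $\le(2-2i)+2\ell=2\ell'$ follows from multiplicativity of the Getzler filtration applied to $\L_i\O_\ell$ and $\O_\ell\L_i$. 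It remains to check $0\le\ell'\le m+1$: the upper bound is immediate, and the only way to reach $\ell'=-1$ is $(\ell,i)=(0,2)$, giving the vanishing term $[\L_2,\O_0]$, so there is no $t^{-(m+2)}$ contribution and the expansion has exactly the stated shape. This closes the induction. The point requiring care is precisely this extra bookkeeping of ordinary order alongside Getzler order — without knowing $\O_0$ has order $0$, commuting with $t^{-2}\L_2$ would a priori manufacture a $t^{-(m+2)}$ term; the lone genuine (and very short) computation is the drop from $3$ to $2$ in the base-case Getzler order, which rests on the even form-degree of $\sigma^G_2(\bar{\Delta})$.
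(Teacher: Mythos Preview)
Your proof is correct and follows essentially the same approach as the paper: induction on $m$, the base case handled via $[\nabla_\E,c(\d f)]=c(\nabla_\E\,\d f)$ and the commutation $[\sigma^G_2(\bar{\Delta}),1\otimes\d f]=0$, and the inductive step via the three-term decomposition of $h_t^{-1}\bar{\Delta}h_t$ from Proposition~\ref{p:Deltah}. Your additional bookkeeping of the \emph{ordinary} differential order of $\O_\ell$ is in fact an improvement over the paper's argument: the paper's inductive step does not explicitly address why the $\ell=0$ term produces no $t^{-m-1}$ contribution (i.e.\ why $[F,\O_0]=0$), whereas your strengthened hypothesis that $\O_0$ has order $0$ makes this immediate.
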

\begin{proof}
Proceed by induction on $m$. For the base case $m=1$ we have 
\[[h^{-1}_t\bar{\Delta} h_t,c(\d f)]=t^{-1}[\nabla_\E,c(\d f)]+[\bar{\Delta},c(\d f)]=t^{-1}\O_0+\O_1.
\]
The operator $\O_0=c(\nabla_\E \d f)$ has Getzler order $o^G(\O_0)=1-1=0$, since $o^G(c(\alpha))=1$ for $\alpha \in \Omega^1(M)$ but $\E$ vanishes to order $1$ on the diagonal (contributing $-1$, see Example \ref{ex:Getzlerfunctions}).  Since $o^G(\bar{\Delta})=2$, $o^G(c(\d f))=1$ we have $o^G(\O_1)\le 3$. But in fact $o^G(\O_1)=2$ because the Getzler symbols 
\[ \sigma^G_2(\bar{\Delta})=-\sum_a \Big(\partial_a\otimes 1-\frac{1}{4}\sum_b \br{x}_b\otimes R^{\top}_{ab}\Big)^2, \qquad \sigma^G_1(c(\d f))=1\otimes \d f \]
commute. This establishes the base case. For the inductive step, suppose
\[ \ad^{m-1}_{h^{-1}_t\bar{\Delta} h_t}(c(\d a))=t^{-(m-1)}\O_0+t^{-(m-1)+1}\O_{1}+\cdots+t^0\O_{m-1}=\sum_\ell t^{-(m-1)+\ell}\O_{\ell},\]
where $o^G(\O_j)=2j$. Then
\[ \ad^m_{h^{-1}_t\bar{\Delta} h_t}(c(\d f))=t^{-(m-1)}[h^{-1}_t\bar{\Delta} h_t,\O_0]+t^{-(m-1)+1}[h^{-1}_t\bar{\Delta} h_t,\O_{1}]+\cdots+t^0[h^{-1}_t\bar{\Delta} h_t,\O_{m-1}].\]
By Proposition \ref{p:Deltah}, $h^{-1}_t\bar{\Delta} h_t=\bar{\Delta}+t^{-1}T+t^{-2}F$ where $o^G(T)=0$, $o^G(F)=-2$. The Getzler orders are $o^G([\bar{\Delta},\O_\ell])=2\ell+2$, $o^G([T,\O_\ell])=2\ell$, and $o^G([F,\O_\ell])=2\ell-2$. Hence a typical term
\[ t^{-(m-1)+\ell}[h^{-1}_t\bar{\Delta} h_t,\O_{\ell}]=t^{-m+(\ell+1)}\O_{\ell+1}'+t^{-m+\ell}\O_{\ell}'+t^{-m+(\ell-1)}\O_{\ell-1}', \]
has Getzler orders as claimed, completing the inductive step.
\end{proof}

The following summarizes the result of applying the previous two lemmas to $P_k$, $\bar{P}_k$.
\begin{corollary}
\label{c:hPh}
On $U \subset M \times M$ we have
\[ h^{-1}_tP_kh_t=\sum_{\ell = 0}^{|k|}t^{-|k|+\ell}P_{k,\ell}, \]
where each $P_{k,\ell}$ is an $s$-fibred differential operator of order $o(P_{k,\ell})=\ell$. For $\ell < \lceil |k|/2 \rceil$, $P_{k,\ell}$ vanishes on the diagonal. There is a similar expansion for $h^{-1}_t\bar{P}_kh_t$, with the additional property the Getzler order $o^G(\bar{P}_{k,\ell})\le 2\ell+N_{=0}(k)$, where $0\le N_{=0}(k)\le p$ is the number of indices $i$ such that $k_i=0$.
\end{corollary}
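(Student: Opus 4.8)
The plan is to combine Lemma \ref{l:hPh} and Lemma \ref{l:iteratedcommutator} with the multiplicativity of the Getzler symbol to read off both the order and the Getzler-order bounds for the pieces of $h_t^{-1}P_kh_t$ and $h_t^{-1}\bar{P}_kh_t$. First I would observe that $P_k$ and $\bar{P}_k$ are $s$-fibred differential operators of order $|k|=k_1+\cdots+k_p$ (each factor $\ad_\Delta^{k_i}c(\d a_i)$ contributes order $k_i$, since $\ad_\Delta$ raises order by one and $c(\d a_i)$ has order $0$), so Lemma \ref{l:hPh} applies verbatim and yields the stated expansion $h_t^{-1}P_kh_t=\sum_{\ell=0}^{|k|}t^{-|k|+\ell}P_{k,\ell}$ with $o(P_{k,\ell})=\ell$, together with the vanishing-on-the-diagonal statement for $\ell<\lceil|k|/2\rceil$. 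The same argument gives the analogous expansion for $\bar{P}_k$; only the Getzler-order refinement is new.

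For the Getzler-order bound I would conjugate factor by factor. Write $h_t^{-1}\bar{P}_kh_t = a_0\cdot\prod_{i=1}^{p}\ad^{k_i}_{h_t^{-1}\bar{\Delta}h_t}(c(\d a_i))$, using that conjugation by $h_t$ is an algebra automorphism and that $h_t^{-1}a_0h_t=a_0$ since $a_0$ is a function. For each $i$ with $k_i>0$, Lemma \ref{l:iteratedcommutator} gives $\ad^{k_i}_{h_t^{-1}\bar{\Delta}h_t}(c(\d a_i)) = \sum_{\ell_i=0}^{k_i}t^{-k_i+\ell_i}\O^{(i)}_{\ell_i}$ with $o^G(\O^{(i)}_{\ell_i})\le 2\ell_i$; for each $i$ with $k_i=0$ the factor is just $c(\d a_i)$, of Getzler order $1$, which I would package as $t^0\O^{(i)}_0$ with $o^G(\O^{(i)}_0)\le 1$. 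Multiplying these out and using $\sigma^G_{m_1+m_2}(T_1T_2)=\sigma^G_{m_1}(T_1)\sigma^G_{m_2}(T_2)$ — so Getzler orders add under composition — the coefficient of $t^{-|k|+\ell}$ is a sum over tuples $(\ell_1,\dots,\ell_p)$ with $\sum_i\ell_i=\ell$ of products $a_0\prod_i\O^{(i)}_{\ell_i}$, each of Getzler order at most $\sum_{i:\,k_i>0}2\ell_i + \sum_{i:\,k_i=0}1 \le 2\ell + N_{=0}(k)$ (the $a_0$ contributes Getzler order $0$). Collecting all such tuples gives $o^G(\bar{P}_{k,\ell})\le 2\ell+N_{=0}(k)$.

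The main obstacle — really the only subtlety — is bookkeeping the indices $i$ with $k_i=0$ correctly: for those factors there is no commutator with $h_t^{-1}\bar{\Delta}h_t$, so no negative power of $t$ is produced and the naive bound $2\ell_i$ would read $0$, whereas $c(\d a_i)$ genuinely has Getzler order $1$, not $0$. This is exactly why the correction term $N_{=0}(k)$ appears, and one must be careful that it is additive over the relevant factors. I would also remark, for the record, that the bound $o^G(\bar{P}_{k,\ell})\le 2\ell$ claimed for the $k_i>0$ factors does not hold when $k_i=0$, so the two cases cannot be treated uniformly; separating them as above and then invoking multiplicativity of $\sigma^G$ is the cleanest route. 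The order statement $o(P_{k,\ell})=\ell$ and the diagonal-vanishing for small $\ell$ require no new argument beyond Lemma \ref{l:hPh}.
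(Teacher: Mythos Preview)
Your proposal is correct and follows essentially the same approach as the paper: apply Lemma~\ref{l:hPh} to get the expansion, the order of each $P_{k,\ell}$, and the diagonal-vanishing statement, then use Lemma~\ref{l:iteratedcommutator} factor by factor (together with multiplicativity of Getzler order) to obtain the Getzler-order bound, with the $+1$ correction for each index $i$ with $k_i=0$ coming from $o^G(c(\d a_i))=1$. Your version is in fact more explicit than the paper's, spelling out the identity $h_t^{-1}\bar{P}_kh_t = a_0\prod_i\ad^{k_i}_{h_t^{-1}\bar{\Delta}h_t}(c(\d a_i))$ and the bookkeeping for the $k_i=0$ factors.
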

\begin{proof}
The claims regarding the order of $P_{k,\ell}$ and its vanishing along the diagonal are immediate consequences of Lemma \ref{l:hPh}. By Lemma \ref{l:iteratedcommutator}, the Getzler order $o^G(\bar{P}_{k,\ell})$ is $2\ell$ if $k_i\ne 0$ for all $i=1,...,p$. For each index $i$ such that $k_i=0$, the Getzler order count becomes $1$ larger than this because $o^G(c(\d a_i))=1$ instead of $0$.
\end{proof}

\begin{corollary}
\label{c:relevantterms}
There is an asymptotic expansion
\[\Tr_s(P_ke^{-t\Delta}) \sim (4\pi)^{-n/2}t^{-n/2-\lfloor |k|/2 \rfloor}\sum_{j\geq 0}\sum_{r = 0}^{\lfloor |k|/2 \rfloor}t^{j+r}\int_M\tr_s(P_{k,r+\lceil |k|/2 \rceil}\Theta_j|_{\br{x}=0})\d V.\]
The component $\varphi_p$ of the residue cocycle is given by
\[ \varphi_p(a_0,...,a_p)=\sum_{|k|\le n-p} \frac{c_{pk}'}{(4\pi)^{n/2}}\sum_{j=(n-p)/2-|k|}^{(n-p)/2-\lceil |k|/2 \rceil}
\int_M\tr_s(P_{k,(n-p)/2-j}\Theta_j|_{\br{x}=0})\d V.\]
\end{corollary}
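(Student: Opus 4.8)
The plan is to combine the asymptotic expansion \eqref{e:PIexp} with the structural information about $h_t^{-1}P_kh_t$ from Corollary \ref{c:hPh}, and then match powers of $t$ against Proposition \ref{p:heatkernelversion}. First I would substitute the expansion \eqref{e:expO} for $\O=P_k$ into \eqref{e:PIexp}, obtaining
\[ \tr_s(P_k\Theta_t|_{\br{x}=0})\sim (4\pi t)^{-n/2}\sum_{j\ge 0}\sum_{\ell=0}^{|k|} t^{j-|k|+\ell}\,\tr_s(P_{k,\ell}\Theta_j|_{\br{x}=0}). \]
The key observation is that, by Corollary \ref{c:hPh}, $P_{k,\ell}$ vanishes on the diagonal when $\ell<\lceil|k|/2\rceil$, so those terms contribute nothing to $\tr_s(P_{k,\ell}\Theta_j|_{\br{x}=0})$; hence the inner sum effectively runs over $\lceil|k|/2\rceil\le \ell\le |k|$, i.e.\ over $\ell=r+\lceil|k|/2\rceil$ with $0\le r\le \lfloor|k|/2\rfloor$. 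The overall power of $t$ is then $-n/2+j-|k|+\ell = -n/2-\lfloor|k|/2\rfloor+j+r$ (using $|k|-\lceil|k|/2\rceil=\lfloor|k|/2\rfloor$), which gives the first displayed formula of the Corollary after integrating over $M$.

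Next I would read off $N_k$ and the relevant coefficient. From the first display, the leading power of $t$ is $t^{-N_k}$ with $N_k=n/2+\lfloor|k|/2\rfloor$, so in the notation of Proposition \ref{p:heatkernelversion},
\[ s_{p,k}=N_k-\tfrac{p}{2}-|k|=\tfrac{n-p}{2}-|k|+\lfloor|k|/2\rfloor-|k|+|k| \]
— more carefully, $s_{p,k}=n/2+\lfloor|k|/2\rfloor-p/2-|k| = (n-p)/2 - \lceil|k|/2\rceil$. The coefficient $\psi_{k,s_{p,k}}$ is extracted by selecting, in the double sum $\sum_{j,r} t^{-N_k+j+r}(\cdots)$, all pairs with $j+r=s_{p,k}=(n-p)/2-\lceil|k|/2\rceil$; writing $r=s_{p,k}-j$ and recalling $\ell=r+\lceil|k|/2\rceil=(n-p)/2-j$, the constraint $0\le r\le\lfloor|k|/2\rfloor$ translates into $(n-p)/2-|k|\le j\le (n-p)/2-\lceil|k|/2\rceil$. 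Plugging the resulting $\psi_{k,s_{p,k}}(a_0,\dots,a_p)=(4\pi)^{-n/2}\sum_j \int_M\tr_s(P_{k,(n-p)/2-j}\Theta_j|_{\br{x}=0})\,\d V$ into the formula of Proposition \ref{p:heatkernelversion} yields exactly the second displayed formula, with the constant $c_{pk}'$ carried along unchanged.

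The one point requiring a little care — and the main (minor) obstacle — is the bookkeeping of the summation ranges and the floor/ceiling identities, together with checking that Proposition \ref{p:heatkernelversion} genuinely applies: one needs that the hypothesized asymptotic expansion $\Tr_s(P_ke^{-t\Delta})\sim t^{-N_k}\sum_{s\ge0}t^s\psi_{k,s}$ holds with the $N_k$ just identified, which is precisely the content of the first displayed formula once one notes (from \eqref{e:aexp}) that the expansion is genuine and term-by-term integrable over the closed manifold $M$, and that $P_ke^{-t\Delta}$ is trace class with the stated expansion by Proposition \ref{p:tasymptotics}. No new analytic input is needed beyond what is already established; the argument is a matching of indices.
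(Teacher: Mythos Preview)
Your proof is correct and follows essentially the same approach as the paper: substitute the expansion of $h_t^{-1}P_kh_t$ from Corollary \ref{c:hPh} into \eqref{e:PIexp}, drop the terms with $\ell<\lceil|k|/2\rceil$ by the vanishing-on-diagonal property, change variables $r=\ell-\lceil|k|/2\rceil$, and then apply Proposition \ref{p:heatkernelversion}. The paper's own proof is the same two-step argument stated in two sentences; you have simply spelled out the index bookkeeping in more detail.
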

\begin{proof}
The expansion for $\Tr_s(P_ke^{-t\Delta})$ follows from substituting the expansion from Corollary \ref{c:hPh} into \eqref{e:PIexp} and making the change of variables $r=\ell-\lceil |k|/2 \rceil$. The formula for $\varphi_p$ is an immediate consequence of Proposition \ref{p:heatkernelversion}.
\end{proof}

\begin{theorem}
\label{t:dbzero}
If $\d B=0$, then
\[ \varphi_p(a_0,...,a_p)=\frac{(2\pi \i)^{-n/2}}{p!}\int_M a_0 \d a_1\cdots \d a_p \cdot \tn{det}^{1/2}\Big(\frac{R_-/2}{\sinh(R_-/2)}\Big)_{[n-p]},\]
where $R_-$ is the curvature of the connection $\nabla_-=\nabla^{LC}-B_{\mf{o}}$.
\end{theorem}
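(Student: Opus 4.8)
The plan is to feed the asymptotic expansion of Corollary \ref{c:relevantterms} into the Getzler calculus, using the crucial hypothesis $\d B = 0$ to force all but the top-degree contributions to vanish. First I would observe that when $\d B = 0$ one has $\bar{\Delta} = \Delta$, hence $\bar\Theta_j = \Theta_j$ and $\bar{P}_k = P_k$, so the Getzler-order bounds from Corollary \ref{c:hPh} apply directly to $P_{k,\ell}$: namely $o^G(P_{k,\ell}) \le 2\ell + N_{=0}(k)$, where $N_{=0}(k)$ counts the indices with $k_i = 0$. The pointwise supertrace $\tr_s(\cdot|_{\br{x}=0})$ picks out the top exterior-power component ($\wedge^n T^*M$) of the Clifford symbol, and the Berezin integral interpretation gives (up to the universal constant $(-2\i)^{n/2}$, see \cite[Lemma 3.49 / Prop. 3.21]{BerlineGetzlerVergne}) that $\tr_s(P_{k,\ell}\,\Theta_j|_{\br{x}=0})$ is computed from the $\wedge^n$-part of $\sigma^{G,0}_{o^G(P_{k,\ell})+2j}(P_{k,\ell}\,\Theta_j)$, and this vanishes unless the total Getzler order reaches $n$. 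Combined with $o^G(\Theta_j) = 2j$ and $o^G(P_{k,\ell}) \le 2\ell + N_{=0}(k)$, the term with indices $(k,\ell,j)$ appearing in Corollary \ref{c:relevantterms} (where $\ell = (n-p)/2 - j + \lceil|k|/2\rceil$ — wait, more carefully $\ell = r + \lceil|k|/2\rceil$ and we need $j + r$ matching the cocycle degree) contributes zero unless $2\ell + N_{=0}(k) + 2j \ge n$.

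Next I would run the arithmetic. In the formula for $\varphi_p$ from Corollary \ref{c:relevantterms}, the relevant operator is $P_{k,(n-p)/2 - j}$ paired with $\Theta_j$, so the total available Getzler order is at most $2\big((n-p)/2 - j\big) + N_{=0}(k) + 2j = (n-p) + N_{=0}(k) \le (n-p) + p = n$, with equality only when $N_{=0}(k) = p$, i.e. $k = 0$ (all $k_i = 0$), and when the Getzler-order bound in Corollary \ref{c:hPh} is sharp at every stage. So only the multi-index $k = (0,\ldots,0)$ survives, and then $P_0 = a_0\, c(\d a_1)\cdots c(\d a_p)$. For $k = 0$ the expansion of Corollary \ref{c:hPh} is trivial, $h_t^{-1} P_0 h_t = P_0$ (there are no $\Delta$-commutators), so $P_{0,\ell} = 0$ for $\ell \ge 1$ and $P_{0,0} = P_0$; thus the only surviving term has $\ell = 0$, forcing $(n-p)/2 - j = 0$, i.e. $j = (n-p)/2$. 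I would then check the range of summation in Corollary \ref{c:relevantterms} indeed contains $j = (n-p)/2$ when $k = 0$ (it runs over $j$ with $\lceil 0 \rceil \le (n-p)/2 - j \le \ldots$, and $\ell = (n-p)/2 - j$ must be $0$, so $j = (n-p)/2$ is the unique value), and record the coefficient $c'_{p,0} = 1/p!$.

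At this point the computation reduces to evaluating $\int_M \tr_s\big(P_0\,\Theta_{(n-p)/2}|_{\br{x}=0}\big)\, \d V$. I would compute $\sigma^{G,0}_{2j}(\Theta_j)$ using Theorem \ref{t:Mehler}: setting $\br{x} = 0$ in Mehler's formula gives $\sum_j t^j \sigma^{G,0}_{2j}(\bar\Theta_j) = \det^{1/2}\!\big(\tfrac{tR^\top/2}{\sinh(tR^\top/2)}\big)$, and since $\d B = 0$ we have $\bar\Theta_j = \Theta_j$ and (Proposition \ref{p:tildeR}) $R^\top = R_-$. The operator $P_0 = a_0\, c(\d a_1)\cdots c(\d a_p)$ has Getzler symbol $\sigma^G_p(P_0) = a_0 \otimes \d a_1 \wedge \cdots \wedge \d a_p$ (Example \ref{ex:forms}, times the function $a_0$ which contributes only its diagonal value). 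Multiplying symbols and extracting the $\wedge^n$ component gives $\tr_s(P_0\,\Theta_j|_{\br{x}=0})$ proportional to $\big(a_0\,\d a_1\cdots \d a_p \cdot \det^{1/2}(\tfrac{R_-/2}{\sinh(R_-/2)})\big)_{[n]}$ at $t = 1$, where the degree-$[n-p]$ part of the $\Ahat$-type factor is selected since $\d a_1\cdots\d a_p$ already has degree $p$ and only $j = (n-p)/2$ contributes. The universal constant $(-2\i)^{n/2}$ from the Berezin/supertrace normalization, together with the $(4\pi)^{-n/2}$ from Corollary \ref{c:relevantterms}, combines to $(2\pi\i)^{-n/2}$ (using $(4\pi)^{-n/2}(-2\i)^{n/2} = (-2\i/4\pi)^{n/2} \cdot (\text{sign bookkeeping}) = (2\pi\i)^{-n/2}$ up to checking signs carefully — this is exactly the constant appearing in the classical formula \eqref{e:ASCM}). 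Assembling, $\varphi_p(a_0,\ldots,a_p) = \frac{(2\pi\i)^{-n/2}}{p!}\int_M a_0\,\d a_1\cdots \d a_p \cdot \det^{1/2}\!\big(\tfrac{R_-/2}{\sinh(R_-/2)}\big)_{[n-p]}$.

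\textbf{Main obstacle.} The bookkeeping that forces $k = 0$ is the crux: one must verify that the Getzler-order inequality $o^G(P_{k,\ell}) \le 2\ell + N_{=0}(k)$ from Corollary \ref{c:hPh}, combined with the pairing structure in Corollary \ref{c:relevantterms} (which pins $\ell = (n-p)/2 - j$ and $j \ge 0$), genuinely caps the total order at $n$ with equality iff $k = 0$ — in particular one must rule out compensating contributions where a higher $j$ trades against a lower $\ell$, and confirm that the vanishing-on-the-diagonal clause for $\ell < \lceil|k|/2\rceil$ does not interfere. The secondary nuisance is pinning down the universal normalization constant and the sign so that the answer comes out exactly $(2\pi\i)^{-n/2}/p!$; this is routine but error-prone, and I would cross-check against the $B = 0$ specialization \eqref{e:ASCM} and against \cite{ponge2003new, chern1997equivariant, lescure2001triplets}.
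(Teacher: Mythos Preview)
Your proposal is correct and follows essentially the same route as the paper: reduce to $\bar P_k=P_k$ and $\bar\Theta_j=\Theta_j$ via $\d B=0$, use the Getzler-order bound $o^G(P_{k,(n-p)/2-j})+o^G(\Theta_j)\le (n-p)+N_{=0}(k)$ to kill every $k\ne 0$, and then read off the $k=0$ contribution from Mehler's formula together with $R^\top=R_-$ (Proposition~\ref{p:tildeR}). The points you flag as obstacles are exactly the ones the paper handles---the range of $j$ in Corollary~\ref{c:relevantterms} already excludes the $\ell<\lceil|k|/2\rceil$ terms, and the constant $(4\pi)^{-n/2}(-2\i)^{n/2}=(2\pi\i)^{-n/2}$ is just the standard Berezin normalization.
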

\begin{proof}
Let $\Psi_{j,k}=P_{k,(n-p)/2-j}\Theta_j|_U\in \Gamma(S\boxtimes S^*|_U)$. The $n=\dim(M)$ order Getzler symbol of $\Psi_{j,k}$ is the $n$-form part of the Clifford symbol of $\Psi_{j,k}|_{\br{x}=0}$. By \cite[Proposition 3.21]{BerlineGetzlerVergne},
\[ \tr_s(\Psi_{j,k}|_{\br{x}=0})=(-2\i)^{n/2}\scr{B}(\sigma^{G,0}_n(\Psi_{j,k})) \]
where $\scr{B}\colon \wedge^n\!T^*M \rightarrow M\times \bR$ is the Berezin integral (\cite[p.40]{BerlineGetzlerVergne}) determined by the orientation and the metric. Therefore by Corollary \ref{c:relevantterms}
\[ \varphi_p(a_0,...,a_p)=\sum_{|k|\le n-p} c_{pk}'(2\pi \i)^{-n/2}\sum_{j=(n-p)/2-|k|}^{(n-p)/2-\lceil |k|/2 \rceil}\int_M\scr{B}(\sigma^{G,0}_n(\Psi_{j,k}))\d V.\]
When $\d B=0$, $P_k=\bar{P}_k$, $\Theta_j=\bar{\Theta}_j$ hence by Theorem \ref{t:Mehler}, $o^G(\Theta_j)=2j$. By Corollary \ref{c:hPh}, $o^G(P_{k,\ell})=2\ell+N_{=0}(k)$ where $0\le N_{=0}(k)\le p$ is the number of indices $i\in \{1,...,p\}$ such that $k_i=0$. Therefore
\[ o^G(\Psi_{j,k})\le n-p+N_{=0}(k).\]
When at least one $k_i \ne 0$, $N_{=0}(k)<p$ and $o^G(\Psi_{j,k})<n$, and so $\sigma^G_n(\Psi_{j,k})=0$. Otherwise if $k=(0,...,0)$ then $c_{pk}'=1/p!$ and by Example \ref{ex:forms} and equation \eqref{e:fakeAhat},
\[ \sigma^{G,0}_n(\Psi_{j,k})=a_0\d a_1\cdots \d a_p \cdot \tn{det}^{1/2}\left(\frac{R^{\top}/2}{\sinh(R^{\top}/2)}\right)_{[n-p]}. \]
The result follows from this and Proposition \ref{p:tildeR}.
\end{proof}

When Theorem \ref{t:CMindex} is specialized to the triple $(C^\infty(M),L^2(M,S),D)$ and the idempotent $e=1$, the result is the Atiyah-Singer formula:
\[ \index(D)=\varphi_0(1)=(2\pi \i)^{-n/2}\int_M \tn{det}^{1/2}\Big(\frac{R_-/2}{\sinh(R_-/2)}\Big).\]

\section{Residue cocycle calculations when $n=4$}\label{s:n4}
In this section we compute the Connes-Moscovici cocycle completely for the operator $D=D^{LC}+c(B)$, $B \in \Gamma(\wedge^3T^*M)$ when $\d B \ne 0$ and the dimension $n=4$. The case $p=4$ may be disposed of immediately: by Corollary \ref{c:relevantterms}, only the $|k|=0$, $j=(4-4)/2=0$ term contributes, thus
\[\varphi_4(a_0,a_1,a_2,a_3,a_4)=\frac{(2\pi i)^{-2}}{4!}\int_M a_0\d a_1 \d a_2\d a_3\d a_4.\] 
The remaining two components $\varphi_0$, $\varphi_2$ of the residue cocycle are computed in the sections below.

\subsection{Recursion relation for the heat kernel coefficients}\label{s:recrel}
There are well-known recursion relations for the heat kernel asymptotic expansion coefficients $\Theta_j$, $\bar{\Theta}_j$ that we briefly recall here. The setup of Section \ref{s:Getzler} will be used in the calculations. In particular we use the inverse of the Riemannian exponential map to identify $U \subset M \times M$ with a neighborhood of the $0$-section $\br{U}\subset TM$, and $\nabla$-parallel translation along radial geodesics is used to identify $S\boxtimes S^*|_U$ with $s^*\End(S)$. Thus for example $\Theta_j$, $\bar{\Theta}_j$, $c(\d B)$ are identified with $\End(\pr^*S)$-valued smooth functions on $\br{U}$. Under the identifications the operator $\nabla_\E$ becomes $\E$. The operators $\Delta$, $\bar{\Delta}$, $c(\d B)$ are viewed as $s$-fibred differential operators (see Example \ref{ex:diffopfibred}). With this understanding, the recurrence relation satisfied by the coefficients $\Theta_j$ is (cf. \cite[Theorem 2.26]{BerlineGetzlerVergne}):
\[ \E \Theta_j+(j+\E (\log \varrho))\Theta_j=-\Delta \Theta_{j-1}, \quad j\ge 1; \qquad \Theta_0=\varrho^{-1} \]
where recall $\varrho=|g|^{1/4}$, with $|g|$ the determinant of the Riemannian metric in normal coordinates (see Example \ref{ex:varrho}). The solutions are given recursively by the formula
\[ \Theta_j(\br{x})=-\varrho^{-1}(\br{x})\int_0^1 t^{j-1}\varrho(t\br{x})\Delta \Theta_{j-1}(t\br{x})\d t.\]
Of course the same equations hold with $\Theta_j$, $\Delta$ replaced by $\bar{\Theta}_j$, $\bar{\Delta}$. Using $\Delta=\bar{\Delta}+c(\d B)$, for the first few terms $j=0,1,2$ we find
\begin{equation} 
\label{e:theta01}
\Theta_0=\bar{\Theta}_0=\varrho^{-1}, \qquad \Theta_1=\bar{\Theta}_1+\Theta_1^B
\end{equation}
where
\begin{equation} 
\label{e:theta1B}
\Theta_1^B(\br{x})=-\varrho^{-1}(\br{x})\int_0^1 c(\d B)_{t\br{x}}\d t, 
\end{equation}
and
\begin{align}
    \Theta_2(\br{x})&=-\varrho^{-1}(\br{x})\int_0^1t\varrho(t\br{x})(\Delta\Theta_1)(t\br{x})\d t \nonumber\\
    &=\bar{\Theta}_2(\br{x})-\varrho^{-1}(\br{x})\int_0^1 t\varrho(t\br{x})\Big(c(\d B)_{t\br{x}}\bar{\Theta}_1(t\br{x})+(\Delta\Theta^B_1)(t\br{x})\Big)\d t.\label{j=2}
\end{align}

In a normal coordinate neighborhood $U_y=U\cap (M \times \{y\})$, the Laplacian $\Delta$ is given by
\begin{equation} 
\label{e:DeltaLocal}
\Delta=-\sum_{i=1}^n\Big(\big(e_i+\omega(e_i)\big)^2-\big(\nabla_{e_i}e_i+\omega(\nabla_{e_i}e_i)\big)\Big)+\frac{\kappa}{4}+c(\d B)-2|B|^2,
\end{equation}
where $\omega \in \Omega^1(U_y,\End(S_y))\simeq \Omega^1(U_y,\bC l(T_y^*M))$ is the connection $1$-form relative to the identification $S|_{U_y}\simeq U_y\times S_y$ given by $\nabla$-parallel translation along radial geodesics, and $e_1,...,e_n$ is a local orthonormal frame on $U_y$ obtained from an orthonormal basis of $T_yM$ by $\nabla$-parallel translation along radial geodesics. In particular, at the origin $\br{x}_y=0$ of the chart, $\omega(e_i)|_{\br{x}_y=0}=0$, $\nabla_{e_i}e_i|_{\br{x}_y=0}=0$, $e_i \omega(e_i)|_{\br{x}_y=0}=0$ (this last follows from \eqref{e:omegalinear} and skew-symmetry of $R$), and hence near $y \in M$,
\begin{equation}
\label{e:DeltaLinear}
\Delta=-\sum_{i=1}^n e_i^2+\frac{\kappa_y}{4}+c(\d B)_y-2|B_y|^2+O(|\br{x}_y|\partial),
\end{equation}
where $O(|\br{x}_y| \partial)$ denotes a $1$-st order differential operator with coefficients that vanish at $y$. The operator $\bar{\Delta}$ has a similar expression, leaving out the $c(\d B)_y$ term.

\subsection{Computation of $\varphi_0$ ($p=0$)}
When $p=0$, $P_k=a_0$ and the commutators (indexed by $k=(k_1,...,k_p)$) are absent. By Corollary \ref{c:relevantterms},
\begin{equation*}
    \label{e:p=0}
    \varphi_0(a_0)=(4\pi)^{-2}\int_M a_0\tr_s\big(\Theta_2|_{\br{x}=0}\big)\d V.
\end{equation*}
The pointwise supertrace $\tr_s(\Theta_2|_{\br{x}=0})$ can be computed explicitly in terms of the differential forms $R^{\top}$ (see Definition \ref{d:F}) and $B$, leading to the following.
\begin{theorem}
\label{t:phi0}
Let $\dim(M)=4$ and $D=D^{LC}+c(B)$ where $B \in \Gamma(\wedge^3 TM)$. The component $\varphi_0$ of the residue cocycle is
\[ \varphi_0(a_0)=(2\pi \i)^{-2}\int_M a_0\bigg(\tn{det}^{1/2}\Big(\frac{R^{\top}/2}{\sinh(R^{\top}/2)}\Big)+\big(\frac{\kappa}{12}-2|B|^2\big)\d B+\frac{1}{6}\d \d^* \d B\bigg), \]
where $\kappa$ is the scalar curvature and $R^{\top}$ is the differential form of Definition \ref{d:F}.
\end{theorem}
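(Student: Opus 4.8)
The plan is to compute the pointwise supertrace $\tr_s(\Theta_2|_{\br{x}=0})$ directly, using the recursion relations from Section \ref{s:recrel}, and then to recognize which pieces contribute after applying $\scr{B}$-type supertrace identities. The starting point is equation \eqref{j=2}, which expresses $\Theta_2$ in terms of $\bar{\Theta}_2$, $\bar{\Theta}_1$, $\Theta_1^B$ and $c(\d B)$. Evaluating at $\br{x}=0$ and taking $\tr_s$, the $\bar{\Theta}_2$ term yields (by Theorem \ref{t:Mehler}, \eqref{e:fakeAhat}, and \cite[Proposition 3.21]{BerlineGetzlerVergne}) the degree-$4$ part of $\tn{det}^{1/2}\big(\tfrac{R^{\top}/2}{\sinh(R^{\top}/2)}\big)$, and one reads off that in dimension $4$ this is $1+\tfrac{1}{(2\pi\i)^2}\cdot(-\tfrac{1}{24}\Tr(R^\top)^2+\ldots)$ — more precisely the usual $\Ahat_1$-representative built from $R^\top$, which after the overall $(2\pi\i)^{-2}$ is exactly the first term in the statement. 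So the real content is to show that the remaining integral, over the correction $-\varrho^{-1}\int_0^1 t\varrho(t\br{x})\big(c(\d B)_{t\br{x}}\bar{\Theta}_1(t\br{x})+(\Delta\Theta_1^B)(t\br{x})\big)\d t$ restricted to $\br{x}=0$ and fed into $\tr_s$, produces the $4$-form $\big(\tfrac{\kappa}{12}-2|B|^2\big)\d B+\tfrac16\d\d^*\d B$.

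I would organize this into three blocks. First, the $c(\d B)\bar{\Theta}_1$ block: at $\br{x}=0$ this is $\tr_s\big(c(\d B)_y\,\bar{\Theta}_1(0)\big)$ times the elementary integral $\int_0^1 t\,\d t = \tfrac12$; and $\bar{\Theta}_1(0)$ is $\sigma^{G,0}_2(\bar{\Theta}_1)=$ the degree-$2$ part of Mehler, i.e. (setting $\br{x}=0$, $t=1$) a multiple of $\Tr(R^\top)=$ scalar-curvature-type term, \emph{plus} the non-Getzler-leading Clifford-degree-$0$ and degree-$4$ pieces. Here one must be careful: $\bar\Theta_1(0)$ as an element of $\Cl(T^*_yM)$ has components in several Clifford degrees, and $\tr_s$ picks out only the top degree $n=4$. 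Since $c(\d B)$ raises Clifford degree by $3$ (mod parity shifts), $\tr_s(c(\d B)\bar\Theta_1(0))$ pairs the $3$-form $\d B$ with the degree-$1$ Clifford part of $\bar\Theta_1(0)$. The degree-$1$ part of the first heat coefficient of $\bar\Delta=\nabla^*\nabla+\kappa/4-2|B|^2$ vanishes (the potential is a scalar, the connection-curvature contribution is Clifford-degree $2$), so actually this block contributes \emph{nothing} — or, if one uses $\Theta_1$ rather than $\bar\Theta_1$, the $c(\d B)\Theta_1^B$ cross-term must be tracked instead. Sorting out exactly which cross-terms survive is the first bookkeeping hurdle.

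Second, the $\Delta\Theta_1^B$ block, which carries the bulk of the answer. From \eqref{e:theta1B}, $\Theta_1^B(\br{x})=-\varrho^{-1}(\br{x})\int_0^1 c(\d B)_{t\br{x}}\,\d t$, so near $y$ one Taylor-expands $\varrho^{-1}=1+O(|\br{x}|^2)$ and $c(\d B)_{t\br{x}}=c(\d B)_y+t\,\br{x}^c c(\nabla^{LC}_{\partial_c}\d B)_y+\tfrac{t^2}{2}\br{x}^c\br{x}^d c(\nabla^{LC}_c\nabla^{LC}_d\d B)_y+\ldots$ (parallel transport is $\nabla$, but the difference from $\nabla^{LC}$ on a form only affects higher order / Clifford-degree terms — another point to check carefully). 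Then I apply $\Delta$ in the local form \eqref{e:DeltaLocal}–\eqref{e:DeltaLinear}: the $-\sum_i e_i^2$ piece acting on the quadratic-in-$\br{x}$ term of $\Theta_1^B$ produces the Laplacian $\Delta^{LC}(\d B)=\d\d^*\d B+\d^*\d\,\d B$ evaluated at $y$ (the rough Laplacian; Weitzenböck converts between rough and Hodge), contributing the $\tfrac16\d\d^*\d B$ after the $t$-integrals $\int_0^1 t\cdot\tfrac{t^2}{2}\d t=\tfrac18$ and a combinatorial factor; the zeroth-order potential $\tfrac{\kappa_y}{4}-2|B_y|^2$ times $c(\d B)_y$ contributes the $\big(\tfrac\kappa{12}-2|B|^2\big)\d B$ piece (note $\tfrac14\cdot\tfrac13=\tfrac1{12}$ from combining the potential coefficient with an integral/combinatorial factor — the exact route to $\tfrac1{12}$ vs $\tfrac18$ is the delicate part). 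Third block: the $\varrho$-factors and the curvature terms inside \eqref{e:DeltaLocal} hitting $c(\d B)_y$ — these generate Clifford-degree-$5$ and degree-$1$ junk plus possibly a $\Tr(R^\top)\cdot\d B$-type term; one must verify these either vanish under $\tr_s$ (wrong Clifford degree — only total degree $4$ survives, and $\d B$ is degree $3$, so only a degree-$1$ companion can pair, which as noted is absent from $\bar\Theta_1$ but \emph{not} obviously from these curvature-cross terms) or combine into the stated scalar-curvature coefficient. Finally, assemble all surviving pieces, convert $\tr_s$ to Berezin integral via \cite[Proposition 3.21]{BerlineGetzlerVergne} (the $(-2\i)^{n/2}=-4$ factor against $(4\pi)^{-2}$ yielding $(2\pi\i)^{-2}$), and identify the total $4$-form with the claimed expression, using $\d^*=-\star\d\star$ on a $4$-manifold to recognize $\d\d^*\d B$.

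\textbf{Main obstacle.} The hard part is not any single computation but the \emph{combinatorial tracking of coefficients} through the nested integrals $\int_0^1 t^{j-1}\cdots\d t$ and the Taylor expansion, combined with the need to keep only the Clifford-degree-$4$ part of a product of several Clifford-valued functions — it is very easy to drop a factor of $\tfrac12$ or to miss a cross-term (e.g. $c(\nabla\d B)$ contracted against the linear-in-$\br x$ part of $\omega$ in \eqref{e:DeltaLocal}) that contributes a degree-$4$ term. A secondary subtlety is confirming that replacing $\nabla^{LC}$-parallel transport by $\nabla$-parallel transport in the synchronous trivialization does not alter the degree-$3$/degree-$4$ output (it should only affect Clifford-degree $\ge 5$, which vanishes on a $4$-manifold, and the Getzler-subleading pieces, but this needs a line of justification). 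I expect the scalar-curvature coefficient $\tfrac{\kappa}{12}$ to be the most error-prone: the usual $a_1$ Seeley–DeWitt coefficient carries $\tfrac{\kappa}{6}$ for $\nabla^*\nabla$, so distinguishing the contribution of $\kappa/4$ from \emph{Bismut's} Lichnerowicz term versus the $\kappa/6$ that would come from a genuine $a_1$-expansion, and seeing how $\d B$ being itself a heat-coefficient perturbation changes this, requires care.
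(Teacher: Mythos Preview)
Your overall framework --- split $\Theta_2$ via \eqref{j=2}, handle $\bar{\Theta}_2$ by Mehler/Getzler, then compute the supertrace of the correction --- is exactly the paper's route. But there is a genuine error in your Clifford--degree bookkeeping that would derail the computation: $\d B$ is a \emph{$4$-form}, not a $3$-form (since $B\in\Gamma(\wedge^3T^*M)$ and $\dim M=4$). Hence $c(\d B)$ already sits in the top Clifford degree, and $\tr_s\big(c(\d B)\bar{\Theta}_1|_{\br{x}=0}\big)$ pairs $\d B$ with the \emph{scalar} (Clifford--degree~$0$) part of $\bar{\Theta}_1|_{\br{x}=0}$, not with a degree-$1$ part. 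That scalar is easy to compute from the recursion: $\bar{\Theta}_1|_{\br{x}=0}=-\bar{\Delta}\varrho^{-1}|_{\br{x}=0}=\sum_i e_i^2\varrho^{-1}|_{\br{x}=0}-\tfrac{\kappa}{4}+2|B|^2=\tfrac{\kappa}{6}-\tfrac{\kappa}{4}+2|B|^2=-\tfrac{\kappa}{12}+2|B|^2$, using the standard $\sum_i e_i^2\varrho^{-1}|_{\br{x}=0}=\tfrac{\kappa}{6}$. So your ``first block'' contributes exactly half of $(\tfrac{\kappa}{12}-2|B|^2)\d B$ --- it does \emph{not} vanish.

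The same degree slip contaminates your ``second block'' analysis. In $\Delta\Theta_1^B|_{\br{x}=0}$ the product rule on $\varrho^{-1}\!\int_0^1 c(\d B)_{t\br{x}}\d t$ gives two relevant pieces: $-\sum_i e_i^2$ acting on the integrand (producing $\tfrac{1}{3}c(\d\d^*\d B)$ via $\int_0^1 t^2\d t=\tfrac{1}{3}$ and the fact that on top forms $-\sum e_i^2=\d\d^*$) and the scalar $-\sum_i e_i^2\varrho^{-1}|_{\br{x}=0}=-\tfrac{\kappa}{6}$ multiplying $c(\d B)$. Combined with the zeroth-order potential $\tfrac{\kappa}{4}+c(\d B)-2|B|^2$ (note $c(\d B)$ is present in $\Delta$, though $c(\d B)^2$ is scalar and has vanishing supertrace), this yields $(\tfrac{\kappa}{4}-\tfrac{\kappa}{6})c(\d B)=\tfrac{\kappa}{12}c(\d B)$: the coefficient $\tfrac{1}{12}$ arises as $\tfrac{1}{4}-\tfrac{1}{6}$, not as $\tfrac{1}{4}\cdot\tfrac{1}{3}$. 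After the overall $-\tfrac{1}{2}$ from $\int_0^1 t\,\d t$ in \eqref{j=2}, this block supplies the other half of $(\tfrac{\kappa}{12}-2|B|^2)\d B$ together with $\tfrac{1}{6}\d\d^*\d B$. Your worries about degree-$5$ junk, $\nabla$ versus $\nabla^{LC}$ transport, and $\omega$-cross-terms are all moot once you recognize $c(\d B)$ is top degree: anything that raises Clifford degree further is automatically zero in dimension~$4$.
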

\begin{proof}
The term involving $R^{\top}$ in the statement of Theorem \ref{t:phi0} comes from $\tr_s(\bar{\Theta}_2|_{\br{x}=0})$; see the proof of Theorem \ref{t:dbzero}. On the other hand $(\Theta_2-\bar{\Theta}_2)(\br{x})$ is given by \eqref{j=2}:
\begin{equation} 
\label{e:thetatheta}
-\varrho^{-1}(\br{x})\int_0^1 t\varrho(t\br{x})\Big(c(\d B)_{t\br{x}}\bar{\Theta}_1(t\br{x})+(\Delta\Theta^B_1)(t\br{x})\Big)\d t.
\end{equation}
Evaluating at $\br{x}=0$, using $\varrho(0)=1$, and performing the integral over $t$, \eqref{e:thetatheta} becomes
\begin{equation}
\label{e:thetatheta2}
(\Theta_2-\bar{\Theta}_2)|_{\br{x}=0}=-\frac{1}{2}\Big(c(\d B)\bar{\Theta}_1|_{\br{x}=0}+(\Delta\Theta^B_1)|_{\br{x}=0}\Big).
\end{equation}
We calculate the supertrace of the two terms I, II of \eqref{e:thetatheta2} in turn.
\medskip

\noindent \textbf{Term I}: we claim that 
\[ -\frac{(4\pi)^{-2}}{2}\tr_s(c(\d B)\bar{\Theta}_1|_{\br{x}=0})=\frac{(2\pi \i)^{-2}}{2}\big(\frac{\kappa}{12}-2|B|^2\big)\scr{B}(\d B), \]
where $\scr{B}\colon \wedge^nT^*M\rightarrow M \times \bR$ is the Berezin integral. To show this we compute at $y \in M$ using equations \eqref{e:theta01}, \eqref{e:DeltaLinear},
\begin{equation} 
\label{e:I1}
\bar{\Theta}_1|_{\br{x}_y=0}=-\bar{\Delta} \varrho^{-1}_y|_{\br{x}_y=0}=\sum_{i=1}^4(e_i)^2\varrho_y^{-1}\big|_{\br{x}_y=0}-\frac{\kappa_y}{4}+2|B_y|^2.
\end{equation}
Using the Taylor series of $g_{ij}$ in normal coordinates (cf. \cite[Proposition 1.28]{BerlineGetzlerVergne}), one has
\begin{equation}
\label{e:Laplacianvarrho}
\sum_{i=1}^4(e_i)^2\varrho^{-1}_y\big|_{\br{x}_y=0}=\frac{\kappa_y}{6},
\end{equation}
at any $y \in M$. Thus taking the supertrace (using \cite[Proposition 3.21]{BerlineGetzlerVergne}),
\[ \tr_s(c(\d B)\bar{\Theta}_1|_{\br{x}=0})=(-2\i)^2\Big(\frac{\kappa}{6}-\frac{\kappa}{4}+2|B|^2\Big)\scr{B}(\d B)=-(-2\i)^2\Big(\frac{\kappa}{12}-2|B|^2\Big)\scr{B}(\d B), \]
which gives the claim.
\medskip

\noindent \textbf{Term II}: we claim that
\[ -\frac{(4\pi)^{-2}}{2}\tr_s(\Delta\Theta^B_1)|_{\br{x}=0}=\frac{(2\pi \i)^{-2}}{2}\big(\frac{\kappa}{12}-2|B|^2\big)\scr{B}(\d B)+\frac{(2\pi \i)^{-2}}{6}\scr{B}(\d \d^* \d B).\]
To verify this we compute at $y \in M$ using \eqref{e:theta1B}, \eqref{e:DeltaLinear},
\begin{align} 
\label{e:II}
\Delta\Theta_1^B|_{\br{x}_y=0}&=\Big(-\sum_i e_i^2+\frac{\kappa_y}{4}+c(\d B)_y-2|B_y|^2\Big)\Big(\varrho_y^{-1}\int_0^1 c(\d B)_{t\br{x}_y}\d t\Big)\Big|_{\br{x}_y=0}\nonumber \\ 
&=-\sum_i e_i^2 c(\d B)\big|_{\br{x}_y=0}\int_0^1 t^2 \d t+\big(\frac{\kappa_y}{12}+c(\d B)_y-2|B_y|^2\big)c(\d B)_y \nonumber \\
&=\frac{1}{3}c(\d \d^*\d B)_y+\big(\frac{\kappa_y}{12}+c(\d B)_y-2|B_y|^2\big)c(\d B)_y
\end{align}
where to obtain the second equality we used $\varrho_y|_{\br{x}_y=0}=1$, $e_i\varrho_y|_{\br{x}_y=0}=0$, \eqref{e:Laplacianvarrho}, and for the third equality we used that $-\sum_i e_i^2 \nu|_{\br{x}_y=0}=(\d \d^* \nu)_y$ on $4$-forms $\nu$ (since $\dim(M)=4$). Another consequence of $\dim(M)=4$ is that $c(\d B)^2$ is scalar, hence $-\tr_s(c(\d B)^2)=0$. Taking the supertrace of \eqref{e:II} as in the computation of Term I gives the claim.
\end{proof}

By expressing $R^\top$ in terms of the curvature $R_-$ of $\nabla_-=\nabla^{LC}-B_{\mf{o}}$, further simplification of the expression in Theorem \ref{t:phi0} is possible.
\begin{theorem}
\label{t:phi0Rminus}
Let $\dim(M)=4$ and $D=D^{LC}+c(B)$ where $B \in \Gamma(\wedge^3 TM)$. The component $\varphi_0$ of the residue cocycle is
\[ \varphi_0(a_0)=(2\pi \i)^{-2}\int_M a_0\bigg(\tn{det}^{1/2}\Big(\frac{R_-/2}{\sinh(R_-/2)}\Big)+\frac{1}{6}\d \d^* \d B\bigg). \]
\end{theorem}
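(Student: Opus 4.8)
The plan is to start from Theorem~\ref{t:phi0}. The term $\tfrac16\d\d^*\d B$ appearing there is identical to the corresponding term in the present statement, so it suffices to prove the pointwise identity of $4$-forms
\[ \tn{det}^{1/2}\Big(\tfrac{R^{\top}/2}{\sinh(R^{\top}/2)}\Big)_{[4]}+\big(\tfrac{\kappa}{12}-2|B|^2\big)\d B=\tn{det}^{1/2}\Big(\tfrac{R_-/2}{\sinh(R_-/2)}\Big)_{[4]}, \]
since $\int_M a_0\,(\cdots)$ depends only on the top-degree component of the integrand (as $a_0$ is a function). Expanding the power series $\tn{det}^{1/2}\big(\tfrac{X/2}{\sinh(X/2)}\big)$ and keeping the degree-$4$ part gives, for any $\mf{o}(TM)$-valued $2$-form $X$, $\tn{det}^{1/2}\big(\tfrac{X/2}{\sinh(X/2)}\big)_{[4]}=-\tfrac{1}{48}\tr(X^2)$, where $X^2$ is the wedge-square (an $\End(TM)$-valued $4$-form). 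Substituting $R^{\top}=R_-+(\d B)_{\mf{o}}$ from Proposition~\ref{p:tildeR}, writing $Y:=(\d B)_{\mf{o}}$, and using $\tr(R_-Y)=\tr(YR_-)$ (both factors have even form-degree), the identity to be proved becomes
\[ 2\tr(R_-Y)+\tr(Y^2)=\big(4\kappa-96|B|^2\big)\d B. \]

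The first point is that $\tr(Y^2)=0$ in dimension $4$. Since $\d B$ is a top form, $\d B=(*\d B)\,\d V$; unwinding the defining relation $g\big((\d B)_{\mf{o}}(X_1,X_2)X_3,X_4\big)=2\,\d B(X_1,X_2,X_3,X_4)$ in a local orthonormal frame shows that the matrix entries of $Y$ are $Y_{ab}=-2(*\d B)\,{*}(e^a\wedge e^b)$. In particular each $Y_{ab}$ is a scalar multiple of a single decomposable $2$-form, so $Y_{ab}\wedge Y_{ab}=0$ and hence $\tr(Y^2)=-\sum_{a,b}Y_{ab}\wedge Y_{ab}=0$.

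The second point is the computation of $\tr(R_-Y)$. With the matrix entries $(R_-)_{ab}=g(R_-e_b,e_a)$, the formula for $Y_{ab}$ above, and the identity $\alpha\wedge{*}\beta=\langle\alpha,\beta\rangle\,\d V$ for $2$-forms on a $4$-manifold,
\[ \tr(R_-Y)=-\sum_{a,b}(R_-)_{ab}\wedge Y_{ab}=2\Big(\sum_{a,b}g\big(R_-(e_a,e_b)e_b,e_a\big)\Big)\d B. \]
Everything thus reduces to evaluating the ``scalar curvature of $\nabla_-$''. Using $R_-=R^{LC}-\d_{\nabla^{LC}}B_{\mf{o}}+B_{\mf{o}}^2$ (as in the proof of Proposition~\ref{p:tildeR}): the $R^{LC}$ term contributes the Riemannian scalar curvature $\kappa$; the $\d_{\nabla^{LC}}B_{\mf{o}}$ term contributes $0$, because $(\d_{\nabla^{LC}}B_{\mf{o}})(e_a,e_b)=(\nabla^{LC}_{e_a}B_{\mf{o}})(e_b)-(\nabla^{LC}_{e_b}B_{\mf{o}})(e_a)$ and $g\big((\nabla^{LC}_X B_{\mf{o}})(Y)Z,W\big)=2(\nabla^{LC}_X B)(Y,Z,W)$ is totally antisymmetric in $Y,Z,W$, hence dies when two of the three coincide; and the $B_{\mf{o}}^2$ term contributes $\sum_{a,b}g\big(B_{\mf{o}}^2(e_a,e_b)e_b,e_a\big)=-4\sum_{a,b,i}B(e_a,e_b,e_i)^2=-24|B|^2$ by the bracket formula for $B_{\mf{o}}^2$ recorded in the proof of Proposition~\ref{p:tildeR}. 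Hence $\sum_{a,b}g(R_-(e_a,e_b)e_b,e_a)=\kappa-24|B|^2$, so $2\tr(R_-Y)+\tr(Y^2)=4(\kappa-24|B|^2)\d B=(4\kappa-96|B|^2)\d B$, which is exactly what was needed; combining with the $R^{\top}$ term of Theorem~\ref{t:phi0} finishes the proof.

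The step requiring the most care is the bookkeeping in dimension $4$: identifying $Y=(\d B)_{\mf{o}}$ with $-2(*\d B)$ times the Hodge star on $2$-forms, tracking the constant $-\tfrac1{48}$ and the factor $2$ in the definition of $(\cdot)_{\mf{o}}$, and fixing the curvature sign convention under which the contraction of $R^{LC}$ equals $+\kappa$ (consistent with the $\tfrac{\kappa}{4}$ in Bismut's Lichnerowicz formula, Proposition~\ref{p:BismutLichnerowicz}). The rest is a direct unwinding of definitions.
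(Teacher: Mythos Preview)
Your proof is correct and follows essentially the same route as the paper: both start from Theorem~\ref{t:phi0}, use the degree-$4$ expansion $\det^{1/2}\!\big(\tfrac{X/2}{\sinh(X/2)}\big)_{[4]}=-\tfrac{1}{48}\tr(X^2)$, substitute $R^\top=R_-+(\d B)_{\mf{o}}$, and then reduce the cross terms using $R_-=R^{LC}-\d_{\nabla^{LC}}B_{\mf{o}}+B_{\mf{o}}^2$. The paper simply asserts the four trace identities $\tr((\d B)_{\mf{o}}^2)=0$, $\tr((\d_{\nabla^{LC}}B_{\mf{o}})(\d B)_{\mf{o}})=0$, $\tr(R^{LC}(\d B)_{\mf{o}})=2\kappa\,\d B$, $\tr(B_{\mf{o}}^2(\d B)_{\mf{o}})=-48|B|^2\,\d B$ as the outcome of ``a short calculation''; your argument recovers exactly these values but organizes the computation a bit differently, first writing $Y_{ab}=-2(*\d B)\,{*}(e^a\wedge e^b)$ via the Hodge star (valid because $\d B$ is a top form in dimension~$4$) and then recognizing $\tr(R_-Y)$ as twice the scalar-curvature-type contraction $\sum_{a,b}g(R_-(e_a,e_b)e_b,e_a)\,\d B$. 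This repackaging is a minor but pleasant conceptual improvement---it explains \emph{why} the cross term produces precisely $(\tfrac{\kappa}{12}-2|B|^2)\d B$ with the right constants, rather than leaving it as an opaque trace computation---but the underlying identities being checked are identical to the paper's.
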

\begin{proof}
Using the identities
\[ \tn{det}^{1/2}\Big(\frac{R^{\top}/2}{\sinh(R^{\top}/2)}\Big)_{[4]}=-\frac{1}{48}\tr(R^{\top,2}) \]
and (see the proof of Proposition \ref{p:tildeR})
\[ R^\top=R_-+(\d B)_{\mf{o}}, \qquad R_-=R^{LC}-\d_{\nabla^{LC}}B_{\mf{o}}+B_{\mf{o}}^2,\]
one has
\[ \tn{det}^{1/2}\Big(\frac{R^{\top}/2}{\sinh(R^{\top}/2)}\Big)_{[4]}=-\frac{1}{48}\tr\Big(R_-^2+2(R^{LC}-\d_{\nabla^{LC}}B_{\mf{o}}+B_{\mf{o}}^2)(\d B)_\mf{o}+(\d B)_\mf{o}^2\Big). \]
By a short calculation one finds that $\tr((\d B)_\mf{o}^2)=0$, $\tr((\d_{\nabla^{LC}}B_{\mf{o}})(\d B)_\mf{o})=0$ while
\[ \tr(R^{LC}(\d B)_\mf{o})=2\kappa \, \d B, \qquad \tr(B_\mf{o}^2(\d B)_\mf{o})=-48|B|^2 \d B. \]
\end{proof}

\subsection{Computation of $\varphi_2$ ($p=2$)}
By Corollary \ref{c:relevantterms},
\begin{equation} 
\label{e:varphi2}
\varphi_2(a_0,a_1,a_2)=\sum_{|k|\le 2}\frac{c_{2k}'}{(4\pi)^2}\sum_{j=1-|k|}^{1-\lceil |k|/2\rceil} \int_M \tr_s(P_{k,1-j}\Theta_j|_{\br{x}=0})\d V,
\end{equation}
and sum ranges over $k=(k_1,k_2)$ with $k_1,k_2\ge 0$.

\begin{lemma}
\label{l:k0}
The $k=(0,0)$ contribution in \eqref{e:varphi2} is
\[ \frac{(2\pi \i)^{-2}}{2}\int_Ma_0 g(\d a_1,\d a_2)\d B. \]
\end{lemma}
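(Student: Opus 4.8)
The plan is to single out the term of \eqref{e:varphi2} indexed by $k=(0,0)$ and evaluate its (integrated) pointwise supertrace directly, using the decomposition $\Theta_1=\bar\Theta_1+\Theta_1^B$ and the Clifford symbol calculus of Section~\ref{s:Getzler}.

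First I would spell out the $k=(0,0)$ term. As $|k|=0$, the inner sum in \eqref{e:varphi2} has only the index $j=1$; the coefficient is $c_{2,(0,0)}'=\tfrac12$ by the formula of Proposition~\ref{p:heatkernelversion}; and since $P_{(0,0)}=a_0c(\d a_1)c(\d a_2)$ is a zeroth-order (multiplication) operator, Corollary~\ref{c:hPh} gives $h_t^{-1}P_{(0,0)}h_t=P_{(0,0)}$, hence $P_{(0,0),0}=a_0c(\d a_1)c(\d a_2)$. So the $k=(0,0)$ contribution equals
\[
\frac{1}{2(4\pi)^2}\int_M\tr_s\!\big(a_0\,c(\d a_1)c(\d a_2)\,\Theta_1|_{\br x=0}\big)\,\d V .
\]

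Next I would insert $\Theta_1=\bar\Theta_1+\Theta_1^B$ from \eqref{e:theta01}. By \eqref{e:I1}, $\bar\Theta_1|_{\br x=0}=-\tfrac{\kappa}{12}+2|B|^2$ is a scalar, so the $\bar\Theta_1$ part contributes $(-\tfrac{\kappa}{12}+2|B|^2)\tr_s\big(c(\d a_1)c(\d a_2)\big)$, which vanishes by \cite[Proposition~3.21]{BerlineGetzlerVergne} since $c(\d a_1)c(\d a_2)$ has Clifford symbol of degree $\le 2<4=n$. Evaluating \eqref{e:theta1B} at $\br x=0$ gives $\Theta_1^B|_{\br x=0}=-c(\d B)$, so what remains is $-a_0\tr_s\big(c(\d a_1)c(\d a_2)c(\d B)\big)$. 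To compute this I would use that, under the Clifford symbol map, left multiplication by a $1$-form $\alpha$ acts as $\epsilon(\alpha)-\iota(\alpha)$ (Example~\ref{ex:forms}): since $\d B$ is a top form, $\sigma\big(c(\d a_2)c(\d B)\big)=\d a_2\wedge\d B-\iota_{\d a_2}\d B=-\iota_{\d a_2}\d B$, so the degree-$4$ part of $\sigma\big(c(\d a_1)c(\d a_2)c(\d B)\big)$ is $-\d a_1\wedge\iota_{\d a_2}\d B$; contracting $\d a_1\wedge\d B=0$ with the metric dual of $\d a_2$ shows this equals $-g(\d a_1,\d a_2)\,\d B$. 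Then \cite[Proposition~3.21]{BerlineGetzlerVergne} (the factor $(-2\i)^{n/2}=-4$, exactly as in the proof of Theorem~\ref{t:dbzero}) gives $\tr_s\big(c(\d a_1)c(\d a_2)c(\d B)\big)=4\,g(\d a_1,\d a_2)\,\scr B(\d B)$.

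Assembling the pieces and using $\scr B(\d B)\,\d V=\d B$ and $-2/(4\pi)^2=\tfrac12(2\pi\i)^{-2}$, the $k=(0,0)$ contribution becomes $\tfrac{(2\pi\i)^{-2}}{2}\int_M a_0\,g(\d a_1,\d a_2)\,\d B$, as asserted. I expect the only real obstacle is keeping the bookkeeping straight in this last step — the signs in the $\epsilon(\alpha)-\iota(\alpha)$ rule, the $(-2\i)^{n/2}$ normalization of the supertrace, and the observation that the $\bar\Theta_1$ piece drops out because its restriction to the diagonal has no Clifford degree-$2$ component; the rest is a mechanical substitution into Corollary~\ref{c:relevantterms}.
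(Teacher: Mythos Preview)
Your proof is correct and follows essentially the same route as the paper: isolate the $j=1$ term, split $\Theta_1=\bar\Theta_1+\Theta_1^B$, kill the $\bar\Theta_1$ contribution, and compute the supertrace of $c(\d a_1)c(\d a_2)c(\d B)$. The only minor difference is that you dispose of $\bar\Theta_1$ by quoting \eqref{e:I1} to see its diagonal value is scalar, whereas the paper invokes the Getzler-order argument of Theorem~\ref{t:dbzero} (vanishing $2$-form component of the $\Ahat$-form); both are valid one-line observations.
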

\begin{proof}
If $|k|=0$ then we only have the $j=1$ term in \eqref{e:varphi2}, which reads
\[ \frac{(4\pi)^{-2}}{2} \int_M a_0\tr_s(c(\d a_1)c(\d a_2)\Theta_1|_{\br{x}=0})\d V.\]
We may write $\Theta_1=\bar{\Theta}_1+(\Theta_1-\bar{\Theta}_1)$. The contribution of $\bar{\Theta}_1$ was computed in Theorem \ref{t:dbzero}, and is trivial in this case because the dimension $n=4$: $\d a_1 \d a_2$ is a $2$-form and $\tn{det}^{1/2}((R^\top/2)/\sinh(R^\top/2))$ has vanishing $2$-form component. On the other hand by equation \ref{e:theta1B} and since $\varrho|_{\br{x}=0}=1$,
\[ (\Theta_1-\bar{\Theta}_1)|_{\br{x}=0}=c(\d B).\]
Taking the supertrace yields
\[ \tr_s(c(\d a_1)c(\d a_2)c(\d B))=(-2\i)^2g(\d a_1,\d a_2)\scr{B}(\d B),\]
which gives the lemma.
\end{proof}

\begin{lemma}
The $|k|=2$ contribution in \eqref{e:varphi2} vanishes.
\end{lemma}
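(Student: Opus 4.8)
The plan is to show that each summand in the $|k|=2$ part of \eqref{e:varphi2} vanishes, and in fact that only one of them is even potentially nonzero. First I would observe that for $|k|=2$ the summation index $j$ runs over $\{-1,0\}$; since $\Theta_{-1}=0$, the only term to control is $\tr_s(P_{k,1}\Theta_0|_{\br{x}=0})$ with $\Theta_0=\varrho^{-1}$. So the claim reduces to showing $\tr_s(P_{k,1}\varrho^{-1}|_{\br{x}=0})=0$ for each of $k=(2,0),(0,2),(1,1)$.

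The one genuinely delicate point is that $P_{k,1}$ is built from $\ad_\Delta$, and when $\d B\ne 0$ the operators $\Delta$ and $c(\d B)$ have large Getzler order, so Corollary \ref{c:hPh} does not directly bound $o^G(P_{k,1})$. The key step is therefore to replace $P_{k,1}$ by $\bar P_{k,1}$ modulo an error that vanishes on the diagonal. To do this I would write $\ad_\Delta=\ad_{\bar\Delta}+\ad_{c(\d B)}$ in each factor of $P_k=a_0\prod_i(\ad_{\bar\Delta}+\ad_{c(\d B)})^{k_i}(c(\d a_i))$ and expand. Since $c(\d B)$ is a bundle endomorphism, $\ad_{c(\d B)}$ does not raise differential order, whereas $\ad_{\bar\Delta}$ raises it by at most one; hence the all-$\ad_{\bar\Delta}$ term is exactly $\bar P_k$ (order at most $|k|$) and every other term has order at most $|k|-1=1$. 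Thus $P_k-\bar P_k$ is an $s$-fibred differential operator of order at most $1$. Applying Lemma \ref{l:hPh} to $P_k$, to $\bar P_k$, and to $P_k-\bar P_k$, and comparing the coefficients of $t^{-1}$ in $h_t^{-1}P_kh_t-h_t^{-1}\bar P_kh_t=h_t^{-1}(P_k-\bar P_k)h_t$, gives $P_{k,1}=\bar P_{k,1}+(P_k-\bar P_k)_0$; by the last clause of Lemma \ref{l:hPh} (with $m=1$, $\ell=0<\lceil 1/2\rceil$) the operator $(P_k-\bar P_k)_0$ vanishes on the diagonal, so $P_{k,1}\varrho^{-1}|_{\br{x}=0}=\bar P_{k,1}\varrho^{-1}|_{\br{x}=0}$.

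It then remains to compute $\tr_s(\bar P_{k,1}\varrho^{-1}|_{\br{x}=0})$, which is a routine Getzler-order count of the kind already carried out in the proof of Theorem \ref{t:dbzero}. By Corollary \ref{c:hPh}, $o^G(\bar P_{k,1})\le 2+N_{=0}(k)$, and since $|k|=2$ forces $N_{=0}(k)\le 1$ we get $o^G(\bar P_{k,1})\le 3$; as $\varrho^{-1}=\bar\Theta_0$ has Getzler order $0$ (it restricts to $1$ on the diagonal; cf. Example \ref{ex:Getzlerfunctions} or Theorem \ref{t:Mehler}), the section $\bar P_{k,1}\varrho^{-1}$ has Getzler order at most $3<4=\dim(M)$. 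Hence its top ($4$-form) Getzler symbol vanishes, and by \cite[Proposition 3.21]{BerlineGetzlerVergne} the pointwise supertrace $\tr_s(\bar P_{k,1}\varrho^{-1}|_{\br{x}=0})=(-2\i)^{2}\scr{B}\big(\sigma^{G,0}_4(\bar P_{k,1}\varrho^{-1})\big)=0$. Therefore $\tr_s(P_{k,1}\Theta_0|_{\br{x}=0})=0$ for every $k$ with $|k|=2$, and the $|k|=2$ contribution in \eqref{e:varphi2} vanishes. The main obstacle is the reduction from $P_{k,1}$ to $\bar P_{k,1}$ in the second step; once $\d B$ has been peeled off into a lower-order (diagonal-vanishing) remainder, the rest is the same symbol bookkeeping as in Theorem \ref{t:dbzero}.
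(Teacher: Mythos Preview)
Your proof is correct and follows essentially the same approach as the paper: reduce to $j=0$, show $P_k-\bar P_k$ has order at most $|k|-1$ so that (via Lemma \ref{l:hPh}) $P_{k,1}-\bar P_{k,1}$ vanishes on the diagonal, and then invoke the Getzler-order bound from Corollary \ref{c:hPh} (together with $\Theta_0=\bar\Theta_0$) exactly as in Theorem \ref{t:dbzero} to kill the remaining supertrace. Your write-up is more explicit about the coefficient-matching in $t$ and the inequality $N_{=0}(k)\le 1$, but the argument is the paper's.
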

\begin{proof}
The contribution involves the supertrace $\tr_s(P_{k,1}\Theta_0|_{\br{x}=0})$. The $s$-fibred differential operators $P_k$, $\bar{P}_k$ have order $|k|$ and the same symbol (in the classical sense), hence $P_k-\bar{P}_k$ is an $s$-fibred differential operator of order $|k|-1$. By Lemma \ref{l:hPh} applied to $P=P_k-\bar{P}_k$, it follows that the $s$-fibred differential operator $P_{k,1}-\bar{P}_{k,1}$ vanishes on the diagonal, and hence $\tr_s(P_{k,1}\Theta_0|_{\br{x}=0})=\tr_s(\bar{P}_{k,1}\Theta_0|_{\br{x}=0})$. But $\Theta_0=\bar{\Theta}_0$, and so this supertrace vanishes by the Getzler order calculations in Theorem \ref{t:dbzero}.
\end{proof}

\begin{theorem}
\label{t:p2}
Let $\dim(M)=4$ and $D=D^{LC}+c(B)$ where $B \in \Gamma(\wedge^3 TM)$. The component $\varphi_2$ of the residue cocycle is
\[ \varphi_2(a_0,a_1,a_2)=\frac{(2\pi \i)^{-2}}{6}\int_M a_0 g(\d a_1,\d a_2)\d B.\]
\end{theorem}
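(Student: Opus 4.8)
The plan is to evaluate the three groups of terms in \eqref{e:varphi2}, organized by the value of $|k|$. The $|k|=0$ contribution is exactly Lemma~\ref{l:k0}, equal to $\tfrac12(2\pi\i)^{-2}\int_M a_0\,g(\d a_1,\d a_2)\,\d B$, and the $|k|=2$ contribution vanishes by the lemma immediately preceding. So the content of the theorem is the $|k|=1$ part, which comes from the two multi-indices $k=(1,0)$ and $k=(0,1)$. For each of these the inner sum over $j$ in \eqref{e:varphi2} has only the term $j=0$, and since $\Theta_0=\varrho^{-1}$ the task reduces to computing $\tr_s\big(P_{k,1}\,\varrho^{-1}\big|_{\br{x}=0}\big)$ for these two $k$.

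First I would pin down $P_{k,1}$. Since $a_0$ and the Clifford multiplications $c(\d a_i)$ are scalar in the variable $\br{x}$ and therefore commute with $h_t$, Proposition~\ref{p:Deltah} reduces $h_t^{-1}(\ad_\Delta c(\d a))h_t$ to $[\Delta,c(\d a)]+t^{-1}c(\nabla_\E\d a)$: the scalar functions in $h_t^{-1}\Delta h_t$ drop out of the commutator, and $[\nabla_\E,c(\d a)]=c(\nabla_\E\d a)$ by the lift property. Reading off the $t^0$-coefficient gives $P_{(1,0),1}=a_0\,[\Delta,c(\d a_1)]\,c(\d a_2)$ and $P_{(0,1),1}=a_0\,c(\d a_1)\,[\Delta,c(\d a_2)]$. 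Next, Bismut's formula $\Delta=\nabla^*\nabla+\tfrac\kappa4+c(\d B)-2|B|^2$ (Proposition~\ref{p:BismutLichnerowicz}) together with the lift property would be used to expand, for any $1$-form $\alpha$,
\[ [\Delta,c(\alpha)]=c(\nabla^*\nabla\alpha)-2\sum_i c(\nabla_{e_i}\alpha)\nabla_{e_i}+[c(\d B),c(\alpha)]. \]
Applying this to $\varrho^{-1}$ (times the remaining factor $c(\d a_j)$) and evaluating at $\br{x}=0$, using that $\varrho=|g|^{1/4}$ is flat to first order at the centre of a normal chart, one finds a sum of products of at most two $1$-forms at the base point, together with a single term containing the commutator $[c(\d B),c(\alpha)]$.

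The decisive point is that for $n=4$ the form $\d B$ has top degree, so $c(\d B)$ is a pointwise multiple of the $\bZ_2$-grading operator: it commutes with the even part of the Clifford algebra and anticommutes with $1$-forms, $[c(\d B),c(\alpha)]=-2\,c(\alpha)\,c(\d B)$. Since the supertrace detects only the degree-$4$ part of the Clifford symbol (\cite[Proposition 3.21]{BerlineGetzlerVergne}), every term built from two $1$-forms (in particular $c(\nabla^*\nabla\alpha)c(\d a_j)$ and $\sum_i c(\nabla_{e_i}\d a_1)c(\nabla_{e_i}\d a_2)$) drops out, and all that survives is the $[c(\d B),c(\alpha)]$ term, which using $c(\alpha)c(\beta)=-g(\alpha,\beta)+\alpha\wedge\beta$ reduces to a scalar multiple of $g(\d a_1,\d a_2)\,\scr{B}(\d B)$. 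I expect $k=(1,0)$ and $k=(0,1)$ to contribute the supertraces $\tr_s([c(\d B),c(\d a_1)]c(\d a_2))$ and $\tr_s(c(\d a_1)[c(\d B),c(\d a_2)])$ with opposite signs, weighted by the distinct coefficients $c_{2,(1,0)}'=-\tfrac16$ and $c_{2,(0,1)}'=-\tfrac13$ of Proposition~\ref{p:heatkernelversion}. After inserting the prefactor $(4\pi)^{-2}$ and using $(4\pi)^{-2}(-2\i)^2=(2\pi\i)^{-2}$, the $|k|=1$ contributions should come out to $\big(\tfrac13-\tfrac23\big)(2\pi\i)^{-2}\int_M a_0\,g(\d a_1,\d a_2)\,\d B$, which combined with the $|k|=0$ term $\tfrac12(2\pi\i)^{-2}\int_M a_0\,g(\d a_1,\d a_2)\,\d B$ yields the asserted formula via $\tfrac12+\tfrac13-\tfrac23=\tfrac16$.

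The main obstacle will be the middle step: tracking which terms of $[\Delta,c(\d a)]\,\varrho^{-1}\big|_{\br{x}=0}$ survive the supertrace and fixing all the Clifford-algebra signs, above all the identity $[c(\d B),c(\alpha)]=-2c(\alpha)c(\d B)$, which is special to $n=4$ and fails once $\d B$ has degree below $n$. A related pitfall is that the two multi-indices $k=(1,0)$ and $k=(0,1)$ are \emph{not} interchangeable here: both the weight $c_{2k}'$ and the sign of the surviving supertrace differ between them, so the computation cannot be shortcut by a symmetry argument.
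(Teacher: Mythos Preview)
Your proposal is correct and follows essentially the same route as the paper: both arguments reduce to the $|k|=1$ terms, isolate the single surviving piece $[c(\d B),c(\d a_i)]$, use the $n=4$ identity $[c(\d B),c(\alpha)]=-2c(\alpha)c(\d B)$, and combine the two supertraces with the unequal weights $c'_{2,(1,0)}=-\tfrac16$ and $c'_{2,(0,1)}=-\tfrac13$ to obtain $\tfrac12-\tfrac13=\tfrac16$. The only difference is how the non-$\d B$ part is killed: the paper writes $P_{k,1}=\bar P_{k,1}+a_0[c(\d B),c(\d a_i)]c(\d a_j)$ and invokes the Getzler-order vanishing already established in Theorem~\ref{t:dbzero}, whereas you expand $[\Delta,c(\alpha)]$ directly via Proposition~\ref{p:BismutLichnerowicz} and observe that all the resulting terms except the $c(\d B)$-commutator are products of two $1$-forms at $\br{x}=0$ and hence have Clifford degree $\le 2$. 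Your argument is slightly more elementary (no appeal to the Getzler calculus), while the paper's is slightly slicker in that it recycles machinery already in place; mathematically they amount to the same vanishing.
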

\begin{proof}
The $k=(0,0)$ term of \eqref{e:varphi2} was calculated in Lemma \ref{l:k0}. By the lemmas above, the only remaining contributions in \eqref{e:varphi2} come from $k=(1,0)$, $(0,1)$ and $j=0$. We have
\[ P_{(1,0),1}=\bar{P}_{(1,0),1}+a_0[c(\d B),c(\d a_1)]c(\d a_2), \quad P_{(0,1),1}=\bar{P}_{(0,1),1}+a_0c(\d a_1)[c(\d B),c(\d a_2)].\]
Inserting these expressions in \eqref{e:varphi2}, the supertraces involving $\bar{P}_{k,1}$ vanish by the Getzler order considerations in the proof of Theorem \ref{t:dbzero}. On the other hand $[c(\d B),c(\d a)]=-2c(\d a)c(\d B)$ since $\dim(M)=4$ and $\d B$ is a $4$-form. Consequently
\[ (4\pi)^{-2}\tr_s(P_{(1,0),1}\Theta_0|_{\br{x}=0})=-2(2\pi\i)^{-2}a_0g(\d a_1,\d a_2)\scr{B}(\d B) \]
\[ (4\pi)^{-2}\tr_s(P_{(1,0),1}\Theta_0|_{\br{x}=0})=2(2\pi \i)^{-2}a_0g(\d a_1,\d a_2)\scr{B}(\d B). \]
Combining these terms with the combinatorial prefactors $c_{1,(1,0)}'=-\frac{1}{6}$, $c_{1,(0,1)}'=-\frac{1}{3}$ and adding to the $k=(0,0)$ term from Lemma \ref{l:k0} yields the result.
\end{proof}

\subsection{An index formula}
When Theorem \ref{t:CMindex} is specialized to the triple $(C^\infty(M),L^2(M,S),D)$ with $n=\dim(M)=4$, and the idempotent $e=1$, we recover (since $\d \d^* \d B$ is exact):
\[ \index(D)=(2\pi \i)^{-2}\int_M \tn{det}^{1/2}\Big(\frac{R_-/2}{\sinh(R_-/2)}\Big). \]

\section{Residue cocycle calculations when $n=6$}
In this brief section we describe the outcome of our calculations of the $\varphi_0$ component of the residue cocycle for the operator $D=D^{LC}+c(B)$ (with $\d B\ne 0$) in dimension $n=6$. Already in this dimension the calculations are considerably more involved than in the previous section.

The $\varphi_0$ term is given in terms of the $3$-rd term $\Theta_3$ in the asymptotic expansion of the heat kernel $e^{-t\Delta}$:
\begin{equation*}
    \label{e:p=0}
    \varphi_0(a_0)=(4\pi)^{-3}\int_M a_0\tr_s\big(\Theta_3|_{\br{x}=0}\big)\d V.
\end{equation*}
As in Section \ref{s:n4}, $\Theta_3=\bar{\Theta}_3+\Theta_3^B$ where $\bar{\Theta}_3$ is the asymptotic expansion coefficient for $\bar{\Delta}=\Delta-c(\d B)$. In fact $\tr_s(\bar{\Theta}_3|_{\br{x}=0})=0$ in this case as $(x/2)/\sinh(x/2)$ is an even function hence the top part of the Chern-Weil form representing the $\Ahat$-class vanishes for degree reasons. We computed $\Theta_3^B$ directly using the recursion relations in Section \ref{s:recrel}. A useful basic Clifford algebra fact that allows to eliminate several terms is that if $\alpha_1,\alpha_2 \in \Omega^4(M)$ (and $\dim(M)=6$) then $c(\alpha_1)c(\alpha_2)=c(\alpha)$ where $\alpha$ is an inhomogeneous sum of forms with degrees $0,2,4$; moreover if $\alpha_1=\alpha_2$ then only degrees $0,4$ appear. Thus for example the operators $c(\d B)^2$, $c(\d B)^3$ (amongst others involving the curvature form, etc.) do not appear in the resulting formula as they have vanishing local supertrace. Another ingredient in the calculation is the method explained in \cite[Proposition 1.18, Proposition 1.28]{BerlineGetzlerVergne}, \cite[Appendix II]{atiyah1973heat} for computing the Taylor expansion coefficients of the metric and connection in a synchronous frame over a geodesic coordinate patch. The result of the calculations is expressed here using abstract index notation (on $\mf{o}(TM)$ indices) and the summation convention.
\begin{theorem}
\label{t:p6}
In dimension $n=6$ the $\varphi_0$ term in the residue cocycle for the operator $D=D^{LC}+c(B)$ is
\[ \varphi_0(a_0)=\frac{(2\pi \i)^{-3}}{18}\int_M a_0g^{ab}g^{cd}(\frac{1}{2}\nabla_a R^\top_{bc}+g^{ef}B_{ace}R^\top_{bf})\wedge \nabla_d \d B. \]
\end{theorem}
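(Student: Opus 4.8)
The plan is to follow the same strategy as in the $n=4$ case (Theorems \ref{t:phi0}, \ref{t:phi0Rminus}), but now tracking one more order in the heat expansion. By Corollary \ref{c:relevantterms} with $p=0$ and $n=6$, the only contributing term is $j=3$, so $\varphi_0(a_0)=(4\pi)^{-3}\int_M a_0 \tr_s(\Theta_3|_{\br{x}=0})\d V$. Write $\Theta_3=\bar{\Theta}_3+\Theta_3^B$ with $\Theta_3^B=\Theta_3-\bar{\Theta}_3$. First I would dispose of the $\bar{\Theta}_3$ contribution: by Theorem \ref{t:Mehler} the constant part of its Getzler symbol $\sigma^{G,0}_{6}(\bar{\Theta}_3)$ equals the degree-$6$ component of $\det^{1/2}\big((R^\top/2)/\sinh(R^\top/2)\big)$, which vanishes because $(z/2)/\sinh(z/2)$ is even (this also follows from Proposition \ref{p:tildeR}, as the top $\Ahat$-form built from any metric connection on a $6$-manifold vanishes for degree reasons). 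So $\tr_s(\bar{\Theta}_3|_{\br{x}=0})=0$ and the whole answer comes from $\Theta_3^B$.

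Next I would compute $\Theta_3^B|_{\br{x}=0}$ using the recursion relation of Section \ref{s:recrel}. From $\Theta_0=\bar\Theta_0=\varrho^{-1}$, $\Theta_1=\bar\Theta_1+\Theta_1^B$ with $\Theta_1^B$ as in \eqref{e:theta1B}, and $\Delta=\bar\Delta+c(\d B)$, one iterates the formula $\Theta_j(\br x)=-\varrho^{-1}(\br x)\int_0^1 t^{j-1}\varrho(t\br x)(\Delta\Theta_{j-1})(t\br x)\,\d t$ to get $\Theta_2^B$ and then $\Theta_3^B$ as nested integrals. Evaluating at $\br{x}=0$ converts each radial integral into an explicit numerical factor, but because we need the value at the origin of an operator obtained by applying $\Delta$ twice, we need the Taylor expansion of $g_{ij}$, $\varrho$, the connection $1$-form $\omega$, and $c(\d B)$ (parallel-translated) to second order in normal coordinates along $U_y$. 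For this I would use \cite[Proposition 1.18, Proposition 1.28]{BerlineGetzlerVergne} together with \eqref{e:omegalinear} and Proposition \ref{p:Deltah}/\eqref{e:DeltaLocal}. The key simplification, as noted in the text, is the Clifford-algebra fact that for $\alpha_1,\alpha_2\in\Omega^4(M)$ on a $6$-manifold $c(\alpha_1)c(\alpha_2)$ is a sum of forms of degrees $0,2,4$ only (and degrees $0,4$ if $\alpha_1=\alpha_2$); since we ultimately take $\tr_s$, only the degree-$6$ part of the Clifford symbol survives (via \cite[Proposition 3.21]{BerlineGetzlerVergne}), so every monomial containing a factor $c(\d B)^2$, $c(\d B)^3$, or $c(\d B)$ times a degree-$\ge 2$ curvature-type form can be discarded. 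After this pruning the surviving degree-$6$ contributions are exactly those linear in $\d B$ (or rather in $\nabla\,\d B$), which is why the final answer is linear in $\nabla_d\,\d B$.

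Then I would collect the surviving terms: the degree-$6$ part of the Clifford symbol of $\Theta_3^B|_{\br{x}=0}$ should organize into the two contractions $g^{ab}g^{cd}\big(\tfrac12\nabla_a R^\top_{bc}+g^{ef}B_{ace}R^\top_{bf}\big)\wedge\nabla_d\,\d B$, up to the overall constant. Applying $\tr_s$ via \cite[Proposition 3.21]{BerlineGetzlerVergne} produces the factor $(-2\i)^3$, which combined with $(4\pi)^{-3}$ gives $(2\pi\i)^{-3}$ (up to sign), and the numerical factor $1/18$ is assembled from the radial-integral coefficients ($\int_0^1 t^{\,\cdot}\,\d t$ factors) and the Taylor coefficients of the metric. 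The Berezin integral $\scr B$ together with the identification $\wedge T^*M\simeq \Cl(T^*M)$ then lets one write the result as an honest $6$-form integrand. The main obstacle is the bookkeeping in computing $\Theta_3^B|_{\br x=0}$: one must expand $\Delta\Theta_2^B$ and $\Delta^2\Theta_1^B$-type expressions carefully, keeping track of the derivatives of $\omega$, $\varrho$ and of the curvature identities (e.g. which second covariant derivatives of $R^\top$ vs. $B$ appear, and how Bianchi-type identities and the relation $R^\top=R_-+(\d B)_\mf{o}$ from Proposition \ref{p:tildeR} let one rewrite them), and in organizing the many terms that individually do not vanish but cancel in pairs or combine — precisely the kind of phenomenon the paper flags as its motivation. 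The rest (the radial integrals, the supertrace normalization, the Clifford pruning) is routine.
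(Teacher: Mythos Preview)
Your proposal is correct and follows essentially the same approach as the paper: the paper likewise reduces to $\tr_s(\Theta_3|_{\br{x}=0})$, splits off $\bar{\Theta}_3$ (whose supertrace vanishes for the parity reason you give), computes $\Theta_3^B$ from the recursion relations of Section~\ref{s:recrel}, invokes the same Clifford fact about products of $4$-forms on a $6$-manifold to prune terms, and uses the Taylor-expansion method of \cite[Propositions 1.18, 1.28]{BerlineGetzlerVergne} and \cite[Appendix II]{atiyah1973heat} in the synchronous frame. One small caution: your claim that ``$c(\d B)$ times a degree-$\ge 2$ curvature-type form can be discarded'' is too strong as stated (a $4$-form Clifford-multiplied by a $2$-form has a nontrivial degree-$6$ part), and indeed the surviving terms in the final formula are precisely of the shape $(\text{curvature }2\text{-form})\wedge(\nabla\,\d B)$; the pruning applies to products of two $4$-forms, not to $4$-form times $2$-form.
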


\bibliographystyle{amsplain}

\begin{thebibliography}{10}

\bibitem{atiyah1973heat}
M.F. Atiyah, R.~Bott, and V.~K. Patodi, \emph{On the heat equation and the
  index theorem}, Invent. Math. \textbf{19} (1973), no.~4, 279--330.

\bibitem{BerlineGetzlerVergne}
N.~{Berline}, E.~{Getzler}, and M.~{Vergne}, \emph{Heat kernels and {D}irac
  operators}, Springer-Verlag, 1992.

\bibitem{bismut1989local}
J.-M. Bismut, \emph{A local index theorem for non {K}{\"a}hler manifolds},
  Mathematische Annalen \textbf{284} (1989), no.~4, 681--699.

\bibitem{chern1997equivariant}
S.~Chern and X.~Hu, \emph{Equivariant {C}hern character for the invariant
  {D}irac operator}, Michigan Math. J. \textbf{44} (1997), no.~3, 451--473.

\bibitem{ConnesBook}
A.~Connes, \emph{Noncommutative geometry}, Academic Press, 1994.

\bibitem{connes1995local}
A.~Connes and H.~Moscovici, \emph{The local index formula in noncommutative
  geometry}, Geom. Funct. Anal. \textbf{5} (1995), 174--243.

\bibitem{getzler1983pseudodifferential}
E.~Getzler, \emph{Pseudodifferential operators on supermanifolds and the
  {A}tiyah-{S}inger index theorem}, Comm. math. phys \textbf{92} (1983), no.~2,
  163--178.

\bibitem{getzler1989chern}
E.~Getzler and A.~Szenes, \emph{On the {C}hern character of a theta-summable
  {F}redholm module}, J. Funct. Anal. \textbf{84} (1989), no.~2, 343--357.

\bibitem{gualtierithesis}
M.~{Gualtieri}, \emph{Generalized complex geometry}, Ph.D. thesis,
  arXiv:0401221.

\bibitem{higson2004meromorphic}
N.~Higson, \emph{Meromorphic continuation of zeta functions associated to
  elliptic operators}, Contemporary Mathematics \textbf{365} (2004), 129--142.

\bibitem{higson2006residue}
\bysame, \emph{The residue index theorem of {C}onnes and {M}oscovici}, Surveys
  in Noncommutative Geometry, Clay Math. Proc. \textbf{6} (2006), 71--126.

\bibitem{hitchin2010lectures}
N.~Hitchin, \emph{Lectures on generalized geometry}, arXiv:1008.0973 (2010).

\bibitem{lescure2001triplets}
J.-M. Lescure, \emph{Triplets spectraux pour les vari{\'e}t{\'e}s {\`a}
  singularit{\'e} conique isol{\'e}e}, Bull. Soc. Math. France \textbf{129}
  (2001), no.~4, 593--623.

\bibitem{ponge2003new}
R.~Ponge, \emph{A new short proof of the local index formula and some of its
  applications}, Comm. Math. Phys. \textbf{241} (2003), no.~2, 215--234.

\bibitem{roe1998elliptic}
J.~Roe, \emph{Elliptic operators, topology, and asymptotic methods (2nd ed.)},
  Addison Wesley Longman, 1998.

\end{thebibliography}

\providecommand{\bysame}{\leavevmode\hbox to3em{\hrulefill}\thinspace}
\providecommand{\MR}{\relax\ifhmode\unskip\space\fi MR }
\providecommand{\MRhref}[2]{%
  \href{http://www.ams.org/mathscinet-getitem?mr=#1}{#2}
}
\providecommand{\href}[2]{#2}

\end{document}